%!TEX program = pdflatex
%%%%%%%%%%%%%%%%%%%%%%%%%%%%%%%%%%%%%%%%%%%%%%%%%%%%%%%
%%%%%%%%%%%%%%%%%%%%%%%%%%%%%%%%%%%%%%%%%%%%%%%%%%%%%%%%%%
%%%    This is the AMS-LaTeX file:
%%
%%     Garcke - Lam - N\"urnberg - Signori
%% 	   GLNS4 - OOL
%%
%%
%%%%%%%%%%%%%%%%%%%%%%%%%%%%%%%%%%%%%%%%%%%%%%%%%%%%%%%%%%
%%%%%%%%%%%%%%%%%%%%%%%%%%%%%%%%%%%%%%%%%%%%%%%%%%%%%%%%%%
\documentclass[a4paper,11pt]{article}
\usepackage{amsmath,amssymb,amsthm,graphicx,inputenc}
\usepackage{hyperref,url,enumitem,bm,esint,color,comment}
\usepackage[margin=30mm]{geometry}

\usepackage{cite}

\definecolor{rosso}{rgb}{0.85,0,0}
\definecolor{azzurro}{rgb}{0.13, 0.67, 0.8}

\def\anold #1{{#1}}
\def\an #1{{\color{rosso}#1}}
\def\an #1{#1}
\def\comm #1{{\color{blue}#1}}
\def\last #1{{\color{rosso}#1}}
\def\last #1{#1}

\def\mod #1{{\color{rosso}#1}}
\def\mod #1{#1}

\def\Accorpa #1#2 #3 {\gdef #1{\eqref{#2}--\eqref{#3}}%
  \wlog{}\wlog{\string #1 -> #2 - #3}\wlog{}}

%% Macros

\newcommand{\ov}[1]{\overline{#1}}

\renewcommand{\hat}[1]{\widehat{#1}}
\let\badeps\epsilon
\newcommand{\eps}{\varepsilon}

\newcommand{\pd}{\partial}

\newcommand{\xxx}{{x}}

\newcommand{\Lpanarh}{\mathcal L}
\newcommand{\Lpanarv}{\tilde{{\mathcal L}}}

\newcommand{\dt}{\partial_t}
\newcommand{\dn}{\partial_{\bnn}}
\def\bnn{{\boldsymbol n}}
\def\bnu{{\boldsymbol \nu}}

\def\norma #1{\mathopen \| #1\mathclose \|}
\def\<#1>{\mathopen\langle #1\mathclose\rangle}
\def\iO {\int_\Omega}
\def\intinf{\int_{-\infty}^{+ \infty}}

\renewcommand{\tilde}{\widetilde}

\newcommand{\0}{\bm{0}}

\newcommand{\ph}{{{\varphi}}}

\def\non{\notag}

\def\erre{{\mathbb{R}}}
\def\enne{{\mathbb{N}}}

\def\Vp{{(\Hx1)^*}}

 % characteristic function

\theoremstyle{plain}
\newtheorem{thm}{Theorem}[section]

\numberwithin{equation}{section}

%%%%%%%%%%%%%%%%%%%%%%%%%%%%
\def\genspazio #1#2#3#4#5{#1^{#2}(#5,#4;#3)}
\def\spazio #1#2#3{\genspazio {#1}{#2}{#3}T0}

\def\L {\spazio L}
\def\H {\spazio H}

% spazi di funzioni su \Omega, \Gamma, Q e \Sigma

\def\Lx #1{L^{#1}(\Omega)}
\def\Hx #1{H^{#1}(\Omega)}

%%%%%%%%%%%%%%%%%%%%%%%%%%%%%

\def\lhs{left-hand side}
\def\rhs{right-hand side}

\def\jump#1{[#1]^+_-}
\def\Sp{{S_+}}
\def\Sm{{S_-}}
\def\Kp{{K_+}}
\def\Km{{K_-}}
\def\emb{{\hookrightarrow}}

\def\Lm{\anold{\lambda}_-}
\def\Lp{\anold{\lambda}_+}

\def\rad{Y}

%%%%%%%%%%%%%%%%%%%%%%%%%%%%%%
% Here, I defined bold and mathbb letters in a very convenient way. "\AA" prints "\bf {A}" and this works for every letter (case sensitive).
%In a simalar fashion "\AAA" prints "\mathbb{A}" and so on. Finally, "\cal A" prints "{\mathcal{A}}".
%%%%%%%%%%%%%%%%%%%%%%%%%%%%%%

\def\multibold #1{\def\arg{#1}%
  \ifx\arg\pto \let\next\relax
  \else
  \def\next{\expandafter
    \def\csname #1#1\endcsname{{\bf #1}}%
    \multibold}%
  \fi \next}

\def\pto{.}

\def\multimathbb #1{\def\arg{#1}%
  \ifx\arg\pto \let\next\relax
  \else
  \def\next{\expandafter
    \def\csname #1#1#1\endcsname{{\mathbb #1}}%
    \multimathbb}%
  \fi \next}

\def\multical #1{\def\arg{#1}%
  \ifx\arg\pto \let\next\relax
  \else
  \def\next{\expandafter
    \def\csname cal#1\endcsname{{\cal #1}}%
    \multical}%
  \fi \next}

% operatori

\def\multimathop #1 {\def\arg{#1}%
  \ifx\arg\pto \let\next\relax
  \else
  \def\next{\expandafter
    \def\csname #1\endcsname{\mathop{\rm #1}\nolimits}%
    \multimathop}%
  \fi \next}

\multibold
qwertyuiopasdfghjklzxcvbnmQWERTYUIOPASDFGHJKLZXCVBNM.

\multimathbb
QWERTYUIOPASDFGHJKLZXCVBNM.

\multical
QWERTYUIOPASDFGHJKLZXCVBNM.

\multimathop
diag dist div dom mean meas sign supp .

%%%%%%%%%%%%%%%%%%%%%%%%%%%%%%%%%%%%%%%%%%%%%%%%%%%%%%%
%%%%%%%%%%%%%%%%%%%%%%%%%%%%%%%%%%%%%%%%%%%%%%%%%%%%%%%%%%%%
\title{On a Cahn--Hilliard equation for the growth and division of 
	chemically active droplets modeling  protocells
	}
\author{Harald Garcke \footnotemark[1] \and Kei Fong Lam \footnotemark[2] \and Robert N\"urnberg \footnotemark[3] \and Andrea Signori \footnotemark[4]}
%\date{ }

\date{ }

\begin{document}
	
\maketitle
%\footnote{Other possible titles:\\
%	On a Cahn--Hilliard model for growth an division of active droplets\\
%	Analysis and computation for models of chemically driven growth and division of droplets\\
%	On mathematical models for chemically active droplet models for protocells\\
%	Active droplet formation in  Cahn--Hilliard models with chemical reactions
%	}

\begin{abstract}
\noindent
The Cahn--Hilliard model with reaction terms can lead to situations in which no coarsening is taking place and, in contrast, growth and division of droplets occur which all do not grow larger than a certain size. 
%This phenomenon has been suggested as a model for protocells. 
%
\an{This phenomenon has been suggested as a model for protocells, and a model based on the modified Cahn--Hilliard equation has  been formulated. 
We introduce this equation and show the existence and uniqueness of solutions.}
%We introduce the modified Cahn--Hilliard equation and  show the existence and uniqueness of solutions. 
Then  formally matched asymptotic expansions are used to identify a sharp interface limit using a scaling of the reaction term which becomes singular when the interfacial thickness tends to zero.
We compute planar solutions and study their stability under non-planar perturbations. 
Numerical computations for the suggested model are used to validate the sharp interface asymptotics. In addition, the numerical simulations show  that the reaction terms lead to diverse phenomena such as growth and division of droplets in the obtained solutions\an{, as well as the formation of shell-like structures.}
\end{abstract}

\noindent {{\bf Keywords}: Cahn--Hilliard equation, chemical reactions, pattern formation, active droplets, protocells}

\vskip3mm
\noindent {\bf AMS (MOS) Subject Classification:}{
35B25, % Singular perturbations in context of partial differential equations
35K55, % Nonlinear parabolic equations
35K61, % Nonlinear initial, boundary and initial-boundary value problems for nonlinear parabolic equations
%%35Q35, %PDEs in connection with fluid mechanics
%74N05, %Crystals in solids
%82D25. %Crystals and polycrystals
35R35, % Free boundary problems for PDEs
%35K57, % Reaction-diffusion equations
35Q92. % PDEs in connection with biology, chemistry, and other natural sciences
}

\renewcommand{\thefootnote}{\fnsymbol{footnote}}
\footnotetext[1]{Fakult{\"a}t f\"ur Mathematik, Universit{\"a}t Regensburg, 93040 Regensburg, Germany
({\texttt harald.garcke@ur.de}).}
\footnotetext[2]{Department of Mathematics, Hong Kong Baptist University, Kowloon Tong, Hong Kong ({\texttt akflam@hkbu.edu.hk}).}
\footnotetext[3]{Department of Mathematics, University of Trento, Trento, Italy
({\texttt robert.nurnberg@unitn.it}).}
\footnotetext[4]{Department of Mathematics, Politecnico di Milano, 20133 Milano, Italy ({\texttt andrea.signori@polimi.it}), Alexander von Humboldt Research Fellow.}

\section{Introduction}
It has been proposed recently that chemical reactions in phase separating systems can lead to a suppression of Ostwald ripening and to the growth and division of droplets  \cite{zwickerostwald, Active_drops, bauermann2023formation}.
These systems are away from thermodynamic equilibrium with an external supply of energy enhancing chemical reactions.
%The presence of these chemical reactions can suppress Ostwald ripening, i.e., in particular, multiple droplets can stably coexist.
In \cite{Active_drops} it was even demonstrated that droplets in the presence of chemical reactions can grow and spontaneously split. Then a further growth of divided droplets, using up the fuel from chemical reactions, is possible, leading eventually to further splitting. The model studied in  \cite{zwickerostwald, Active_drops, bauermann2023formation} involves a Cahn--Hilliard model with chemical reactions, and in general,  does not fulfill a  free energy inequality. This is due to the fact that energy is supplied\anold{,} and such systems are called active systems. 
In  \cite{Active_drops} the authors argue that such active systems can play an important role in  the transition
between nonliving and living systems.  Initially, featureless aggregates of abiotic matter evolve and form protocells which  can be the basis for systems that  gain the structure and functions necessary to fulfill
the criteria of life.

It was also shown in subsequent studies that synthetic analogues of such chemically active systems can be developed, see 
\cite{DonauSSetal20}. In such system not only Ostwald ripening is suppressed but also stable liquid shells can form, see \cite{bauermann2023formation, BergmannBBetal23}. Here, a shell of one phase forms with an inside and an outside of a second phase. In fact, it was observed experimentally that spherical, active droplets  \anold{can} undergo a morphological transition into a  spherical shell. In \cite{BergmannBBetal23} it was shown that the mechanism is related to gradients of the  droplet material, and the authors also identify how much   chemical \anold{energy} is necessary to sustain the spherical drop. 
The non-conservative Cahn\anold{--}Hilliard system the authors \anold{of \cite{Active_drops} introduced} is
\begin{subequations}
	\begin{alignat*}{2}
		 \dt \ph & = \div(m(\ph)\nabla  \mu) + S_\eps(\ph),
			\\	 \mu &= \beta (-\eps \Delta \ph + \tfrac 1 \eps \psi'(\ph)),
	\end{alignat*}
\end{subequations}
where in this paper we take $\ph$ to be the normalized concentration difference between the droplet material and 
the background material\an{, which is scaled \mod{such that the two phases are at } $\pm1$. Besides, $\beta$ and $\eps$ are constants, and $m$ and  $S_\eps$ are phase-dependent functions that will be introduced later.} The variable $\mu$  is the chemical potential given as the first variation of a 
 \an{Ginzburg--Landau} {\it free energy}  \an{given by}
\begin{align}
	\label{freeeneintro}
	\an{{\cal E}(\ph) = }\beta \Big(
	\frac \eps 2 \iO |\nabla \ph|^2
	+ \frac 1\eps \iO \psi(\ph)	
	\Big)\an{,}
\end{align}
\an{where $\psi$ represents a suitable double-well free energy density.}
%\comm{[There is some repetition related to the Cahn–Hilliard equation here...see also below...]}
Compared to the classical Cahn--Hilliard model the main new term is the reaction term $S_\eps(\ph)$ which in
\cite{Active_drops} 
was taken to be affine linear outside of the interfacial region that separates the two phases given by the droplet  regions 
and the background region. Within the interfacial \anold{region} an interpolation between these two affine linear functions is
chosen. In the droplet \anold{phase} $\{\ph = 1\}$, $S_\eps(1)$ will be negative, which reflects the fact that the droplet material degrades chemically. In the background phase $\{\ph = -1\}$, $S_\eps(-1)$ will be positive, which takes into account that the material making up the
droplet phase is produced in the background phase by chemical reactions involving a fuel which powers its production, see 
\cite{Active_drops} for details.

In the case without chemical reactions, i.e., $S_\eps=0$\anold{,}  the Cahn--Hilliard model was first formulated
in \cite{Cahn61} using the  free energy
\eqref{freeeneintro} introduced in \cite{CahnH58}.
Since its introduction the Cahn--Hilliard equation has been  the subject of many studies and  has found many 
applications. We refer to \cite{NovickCohen98, Miranville19, BDGP} for detailed overviews.
In particular, it
can be shown that the Cahn--Hilliard model is the $H^{-1}$ gradient flow of the energy \eqref{freeeneintro}, see e.g.\an{,} \cite{Garcke13, BDGP}. Furthermore, it was also shown, first formally by Pego \cite{Pego89} and later rigorously by
Alikakos, Bates and Chen \cite{AlikakosBC94}, that the Cahn--Hilliard model converges to the Mullins--Sekerka sharp interface model as the interfacial thickness converges to zero.
\anold{It was}  also demonstrated \anold{that the Cahn--Hilliard model} can be
used to describe the Ostwald ripening process, where small particles dissolve and larger ones grow, see 
\cite{GarckeNRW03}. There are numerous analytical results  on the Cahn--Hilliard equation and we here only refer to the existence results in \cite{ElliottZ86, ElliottG96} and to \cite{AbelsW07,   Miranville19} who, in particular, discuss the Cahn--Hilliard equation from a semi-group perspective and also study  the physically  relevant 
logarithmic potential.

Several models have been proposed in which a reaction type term $S_\eps(\ph)$ appears.
The simplest one is the Cahn--Hilliard--Oono model in which 
$S_\eps(\ph)= -\omega \an{(\ph-c^*)}$ with a positive constant $\omega$\an{, and $c^* \in (-1,1)$ is given.} 
This term accounts for nonlocal interactions in phase separation, see \cite{Miranville19}. A proliferation term
$S_\eps(\ph)= - \lambda \ph(1-\ph)$ with a constant $\lambda>0$ has been introduced in \cite{KhainS08}. For analytical results in this case we refer to  \cite{Miranville19}.
A source term which depends on $\ph$ but  also depends on the spatial variable $x$
has been proposed in \cite{BertozziEG07} for applications in binary image inpainting and was subsequently analysed in
\cite{BurgerHS09,GLS18}, see also \cite{Wang19} for an application to image segmentation. In addition, in several tumor growth models\anold{,} Cahn--Hilliard type models with source terms appear and are coupled to other equations, see\an{, e.g.,} \cite{CristiniLLW09, Hawkins-DaarudZO12,GLSS}.

In this paper, we mathematically analyze the Cahn--Hilliard model  introduced in
\cite{Active_drops}. We will first carefully introduce the model and then show a well-posedness result for the system. 
We use formally matched asymptotic expansions to relate the diffuse interface Cahn–Hilliard model to a new sharp interface \an{model, which differs} from the sharp interface model proposed in \cite{Active_drops}.
In particular, we will show that asymptotic expansions lead to a quasi-static diffusion problem also possibly involving
source terms stemming from reactions at the interface.
For the sharp interface model, we derive planar stationary solutions.
% and its stability. 
%In addition, we the existence of two  radially symmetric solutions and it will turn out that for certain parameters
%two stationary radial solutions exist. 
%The larger one is stable with respect to radial perturbations but for certain parameters  will
%turn out to be unstable under non-radial perturbations. The stability analysis is similar to the famous
%Mullins--Sekerka stability analysis \cite{MullinsSekerka} and  has be also used in \cite{Active_drops} for the case of
%the  non-quasistationary case.
Finally, we will use finite element computations for the Cahn--Hilliard model with reactions to
numerically verify the matched asymptotics and to illustrate  the stability and instability behavior of solutions. In particular,
we will show several splitting scenarios as well as the formation of shell-like \anold{structures.}
% in two and three spatial dimensions.

\section{Mathematical models}
\subsection{The Cahn--Hilliard model}
Let $\Omega$ be a bounded domain \an{ in $\mathbb{R}^d$, $d\in \{\mod{1,}2,3\}$,} with boundary $\partial \Omega$ containing two chemical species. 
We introduce a normalized difference $\ph$ of the concentrations of two chemical components that is governed by the following Cahn--Hilliard equation with chemical reactions \cite{Active_drops}:
\begin{subequations}\label{eq:sys21}
\begin{alignat}{2}
	\label{sys:1}
	\dt \ph  &= \div(m(\ph)\nabla  \mu) + S_\eps(\ph)
	\qquad && \text{in $Q := (0,T) \times \Omega $,}
	\\
	\label{sys:2}
	 \mu &= \beta (-\eps \Delta \ph + \tfrac 1 \eps \psi'(\ph))
	\qquad && \text{in $Q$,}
	\\
	\label{sys:3}
	\dn \mu  &= \dn \ph =0
	\qquad && \text{on $\Gamma := (0,T) \times \partial \Omega $,}
	\\
	\label{sys:4}
	 \ph(0) &=\ph_0
	\qquad && \text{in $\Omega$.}
\end{alignat}
\end{subequations}
\Accorpa\Sys {sys:1} {sys:4}
Here, $\mu$ is the associated chemical potential, $m\anold{: \erre \to \erre_{>0}}$ is the concentration dependent mobility function, $\beta >0$ is a parameter related to a surface energy density, $\eps>0$ is a small length scale \anold{proportional} to the thickness of the diffuse interface function, $\psi\anold{: \erre \to \erre_{\geq 0}}$ is a double well potential, $ \dn $ is the derivative in the direction of the unit outer normal $\bnn$ to $\partial \Omega$ and $\ph_0$ serve\anold{s} as initial data for $\ph$. We consider the above equations on the space-time cylinder ${Q}$ with a fixed but arbitrary time $T>0$.

The source term $S_\eps:\erre \to \erre$
is given as
\begin{equation}\label{def:source}
S_\eps(r)= S_1(r)+\tfrac 1\eps S_2(r){,\quad r \in \erre}.
\end{equation}
Here, the term $S_2$ will later lead to a fast reaction in the interfacial region.
On choosing $r_c \in (0,1]$, we set for constants $S_+$, $S_-$, $K_+$\anold{, $K_-$, and $L$}
\begin{align}
	\label{def:source1} 
	S_1(r)= 
	\begin{cases}
		\Sp  & \quad \text{if $r \geq {r_c}$,}
		\\
		\Sm + G_1(r)(\Sp-\Sm) 
		& \quad \text{if $r\in (-{r_c},{r_c})$,}
		\\
		\Sm & \quad \text{if $r \leq -{r_c}$,}
	\end{cases}
\end{align}
and
\begin{align}
	\label{def:source2} 
	S_2(r)= 
	\begin{cases}
		- \Kp  (r-1) & \quad \text{if $r \geq {r_c}$,}
		\\
		\hat S_2(r)
%		- {\Km}  G_2(r) -  {\Kp}  G_3(r)+ L G_4(r)
%		{- K_+(r_c-1)G_1(r) - K_-(1 - r_c)(1-G_1(r))}
		& \quad \text{if $r\in (-{r_c},{r_c})$,}
		\\
		- \Km   (r+1)& \quad \text{if $r \leq -{r_c}$,}
	\end{cases}
\end{align}
where we define for $r\in (-{r_c},{r_c})$
$$
\hat S_2(r)=		- {\Km}  G_2(r) -  {\Kp}  G_3(r)+ L G_4(r)
{- K_+(r_c-1)G_1(r) - K_-(1 - r_c)(1-G_1(r))}.
$$
Here,  $G_1,G_2,G_3, G_4 :[-1,1] \to \erre$ are suitable differentiable interpolation functions\anold{,} to be introduced below\anold{,} satisfying
\begin{align}
	& \label{ass:G:1}
	G_1({r_c})=1,
	\quad 
	G_1(-{r_c}) =0,
	\quad 
	G_2(\pm{r_c}) =G_3(\pm{r_c}) =G_4(\pm{r_c}) =0, 
	\\ 
	& \label{ass:G:2}
	G_1'(\pm{r_c})=G_4'(\pm{r_c})=0,
	\quad 
	G_2'({r_c})=G_3'(-{r_c})=0,
	\quad 
	G_2'(-{r_c})=G_3'({r_c})=1,
\end{align}
so that the source term $S_\eps$ is differentiable on $\erre$. 
We often use $r_c=1$ which considerably simplifies the expression for $\hat S_2$ and the matched asymptotic expansions which we use later
to derive a sharp interface limit. However, other choices can also be considered, such as $r_c = \frac12$ \anold{as} in \cite{Active_drops}.
%\footnote{\anold{Oberserve that so far  for the asymptotic analysis we require $S_2(\pm1)=0$, i.e., $r_c=1$.}}

%We will set
%\begin{align}
%	\label{def:source1}
%	S_1(r)=
%	\begin{cases}
%	\Sp  & \quad \text{if $r \geq 1$,}
%	\\
%	\Sm + G_1(r)(\Sp-\Sm)
%	 & \quad \text{if $r\in (-1,1)$,}
%	\\
%	\Sm & \quad \text{if $r \leq -1$,}
%	\end{cases}
%\end{align}
%and
%\begin{align}
%	\label{def:source2}
%	S_2(r)=
%	\begin{cases}
%		- \Kp  (r-1) & \quad \text{if $r \geq 1$,}
%		\\
%		- {\Km}  G_2(r) -  {\Kp}  G_3(r)+ L G_4(r)
%		& \quad \text{if $r\in (-1,1)$,}
%		\\
%		 - \Km   (r+1)& \quad \text{if $r \leq -1$,}
%	\end{cases}
%\end{align}
%for constants $S_{\pm}$ and $K_{\pm}$ while $G_1$, $G_2$, $G_3$ and $G_4 :\erre \to \erre$ are suitable differentiable interpolation functions to be introduced below that satisfy 
%\begin{subequations}\label{ass:G}
%\begin{alignat}{2}
%\label{ass:G:1} & G_1(1) = 1, \quad G_1(-1) = 0, \quad G_1'(\pm 1) = 0, \\
%\label{ass:G:2} & G_2(\pm 1) = 0, \quad G_2'(1) = 0, \quad G_2'(- 1) = 1, \\
%\label{ass:G:3} & G_3(\pm 1) = 0, \quad G_3'(1) = 1, \quad G_3'(-1) = 0, \\
%\label{ass:G:4} & G_4(\pm 1) = 0, \quad G_4'(1) = 0, \quad G_{4}'(-1) = 0,
%\end{alignat}
%\end{subequations}
%so that the source term $S_\eps$ defined in \eqref{def:source} is differentiable on $\erre$.

The system \anold{\eqref{eq:sys21}} is related to the following {\it free energy}
\begin{align*}
	{\cal E}(\ph) = \beta \Big(
	\frac \eps 2 \iO |\nabla \ph|^2
	+ \frac 1\eps \iO \psi(\ph)	
	\Big).
\end{align*}
In what follows we assume that $\psi$ is even, that is $\psi(r)=\psi(-r)$ for $r \in \erre$, and satisfies $\psi(\pm 1) = 0$
and $\psi''(\pm 1) \neq 0$.
Typically, we will choose the  {\it quartic potential}
\begin{align}
	\label{quartic}
	\psi(r) = \tfrac 14 (1-r^2)^2, \quad r \in \erre.
\end{align}

\subsection{Possible choice of the interpolation functions}
As mentioned above, {in the theoretical analysis to follow,} we consider any interpolation function\anold{s} $G_1$, $G_2$, $G_3$ and $G_4$ such that \eqref{ass:G:1}--\eqref{ass:G:2} are fulfilled.
Here, we present a possible choice related to the double well potential $\psi$. We set, for every $r \in [-1,1]$,
\begin{align}
	\label{def:G1}
	G_1(r) { = \hat G_1(\tfrac{r}{r_c}), \quad \hat G_1(r) }
	& =
	\tfrac 34 (r+1)^2- \tfrac 14(r+1)^3,
	\\
	\label{def:G23}
	G_2(r) { = r_c \hat G_2(\tfrac{r}{r_c}), \quad \hat G_2(r) }
	& = -\anold{\tfrac 12}\tfrac {1}{ \sqrt{\psi''(-1)}} (r-1) \sqrt{2 \psi(r)},
	\quad
	G_3 (r) = - G_2(-r),
\end{align}
and observe that  
\begin{align*}
	\hat G_1(1)=1,
	\quad 
\hat 	G_1(-1) =G_1'(\pm1)=0,	
	\quad 
\hat 	G_2(\pm 1) =0,
	\quad 
\hat 	G_2'(1)=0,
	\quad 
\hat 	G_2'(-1) = 1.
\end{align*}
We just provide the details \anold{for} verifying $\hat G_2'(-1) = {1}$ as the others are straightforward. For convenience, let us set $c^*:=-\anold{\frac 12}\frac {1}{ \sqrt{ \psi''(-1)}}$ so that
\begin{align*}
	\an{\hat G_2'(-1) = \frac d {dr} \hat G_2(r)\Big|_{r=-1} }= c^* \Big(\sqrt{2\psi(r)} + (r-1) \frac d{dr} \sqrt{2\psi(r)}\Big)\Big|_{r=-1} = -2 {c^*}\frac d{dr}\sqrt{2\psi(r)}\Big|_{r=-1}  .
\end{align*}
For the argument in the latter expression, using Taylor's expansion, it holds that
\begin{align*}
	\psi(r) =
	\underbrace{\psi(-1)}_{=0}
	+ \underbrace{\psi'(-1)(r+1)}_{=0}
	+ \frac 12\psi''(-1)(r+1)^2
	+ o \big((r+1)^2\big),
	\quad
	r \in \erre,
\end{align*}
whence we infer that
\begin{align*}
	\frac d{dr}\sqrt{2\psi(r)}\Big|_{r=-1}
	= \lim_{r \searrow  -1} \frac {\sqrt{\psi''(-1)} \,|r+1|}{r+1} = \sqrt{ \psi''(-1)}.
\end{align*}
Thus, \an{using} the definition of $\hat G_2$ as \an{stated} above\an{, we infer} that $\hat G_2'(-1)=1$\an{,} as claimed. Note that due to the relation $\hat G_3(r) = - \hat G_2(-r)$ the condition $\hat G_3'(1) = 1$ can be inferred in the same way. In particular, for the quartic potential \eqref{quartic}, we have $\psi''(-1) = 2$, and so
\[
\hat G_2(r) = - \tfrac{1}{4} (1-r^2)(r-1), \quad \hat G_3(r) = - \hat G_2(-r) = - \tfrac{1}{4}(1-r^2)(r+1)
\]
which fulfill \eqref{ass:G:1} and \eqref{ass:G:2}, respectively.
%In particular, let us notice that for the quartic potential \eqref{quartic} we have $\psi''(-1)=2$, so that we obtain
%\begin{align*}
%	\hat G_2 (r)& = 
%	- \tfrac 14 (1-r^2)(r-1)
%	,
%	\quad
%	\hat G_3 (r) = {-} G_2(-r)= {-}
%	\tfrac 14 (1-r^2)(r+1)
%	,
%\end{align*}
%which fulfill \eqref{ass:G:1}--\eqref{ass:G:2}.
In addition, we set
\begin{equation}\label{eq:G4}
	G_4(r) { = \hat G_4(\tfrac{r}{r_c}), \quad \hat G_4(r) }
	= 2\psi(r) {,}\quad r\in \mathbb{R}\an{,}
\end{equation}
\an{which clearly satisfies  \eqref{ass:G:1} and \eqref{ass:G:2}.}

\subsection{The sharp interface model}
We now present the corresponding sharp interface system related to the Cahn--Hilliard model above.
We will later use the method of formally matched asymptotic expansions, see, \an{e.g.,} \cite{BDGP}, to derive this free boundary problem.
We denote by  $\an{\Omega^\pm=}\Omega^\pm(t)$  the two regions occupied by the {pure} phases and by $\Sigma\an{=\Sigma}(t)$  the \an{evolving} interface separating the two phases.
In addition, let
$m_\pm:=m(\pm1)$ be the mobilities in the two phases.
\anold{The} free boundary problem corresponding to \anold{\eqref{eq:sys21}} reads as follows:
\begin{subequations}\label{SharpI}
\begin{alignat}{2}
	\label{sharpI:1:p}
	 - m_+ \Delta \mu &= \Sp - \rho_+ \mu  \quad &&\text{in $\Omega^+$,}
		\\
        \label{sharpI:1:m} - m_- \Delta \mu & =\Sm - \rho_- \mu  \quad && \text{in $\Omega^-$,}
	\\
	\label{sharpI:2}
	\mu &=  \frac{\gamma \beta \kappa}2
	\qquad && \text{on $\Sigma$,}
	\\
	\label{sharpI:3}
	\jump{\mu} & = 0
	\qquad && \text{on $\Sigma$,}
	\\
	\label{sharpI:4}
	 - 2 {\cal V} & = \jump{m\nabla \mu} \cdot {\bnu}
	+ S_I
	\qquad && \text{on $\Sigma$,}
	\\
	\label{sharpI:5}
	 \dn \mu & =0
	\qquad && \text{on $\partial \Omega$,}
\end{alignat}
\end{subequations}
\Accorpa\SharpI {sharpI:1:p} {sharpI:5}
where
 $	\rho_\pm= \frac {K_\pm}{\beta \psi''(\pm1)}$,  $\gamma$ is a constant depending on the choice of the double well potential $\psi$ and \anold{$S_I$ is} the interface reaction term   which  depends on the choice of the interpolation functions $G_2$, $G_3$ and $G_4$ \an{(cf. \eqref{defn:SI})}. In the case of the quartic potential and $r_c=1$, we will obtain
 \[
 \rho_\pm= \frac {K_\pm}{2\beta}, \quad \gamma = \frac{2 \sqrt{2}}{3}, \quad S_I = \frac{1}{\sqrt{2}} \Big (K_{+} - K_{-} + \frac{4}{3} L \Big ).
 \]
In the above, we also use $\kappa$ to denote the mean curvature \anold{of $\Sigma$} that is given as the sum of the principal curvatures of $\Sigma$, $\bnu$ is the unit normal to the interface, \anold{and} ${\cal V}$ \anold{is} the normal velocity of the interface in the direction of the normal $\bnu$.
In addition,
 for $x\in\Sigma(t)$ and a function $u$\an{, we define}  its jump across the interface at $(t,x)$ as
\begin{equation*}
	[u]^+_-(t,x):=\lim_{\substack{y\to x\\y\in\Omega^+(t)}} u(t,y)-
	\lim_{\substack{y\to x\\y\in\Omega^-(t)}} u(t,y)\,.
\end{equation*}
Further information about the notation used can be found in \cite{BDGP}.

Notice that the above system is connected to the well-known {\it Mullins--Sekerka} free boundary problem \cite{BDGP}, with the differences that here we have a constant source term {$S_I$} on the \rhs\ of \eqref{sharpI:4} as well as an affine linear term in
the quasi-static diffusion equations \eqref{sharpI:1:p} and \eqref{sharpI:1:m}.

\subsection{Nondimensionalization for the sharp interface problem}\label{sec:nondim}
We now perform a nondimensionalization {argument} in order to identify important dimensionless parameters.
Choosing units $\tilde x$, $\tilde t$ and $\tilde \mu$ for length, time and chemical potential we introduce
the nondimensional variables
\begin{equation*}
	\hat x =\frac x{\tilde x},\quad \hat t =\frac t{\tilde t}, \quad \hat \mu =\frac \mu{\tilde \mu}\anold{,}
\end{equation*}
\anold{and now consider the rescaled variant of system \eqref{eq:sys21} on the rescaled domains $\hat \Omega^+$, $\hat \Omega^-$\an{, and the corresponding} interface $\hat{\Sigma}$.}
Denoting by $\hat{\nabla}$ and $\hat{\Delta}$ \anold{the} gradient and Laplacian with respect to $\hat{x}$, for the new nondimensional variables\an{,} we obtain from \anold{\eqref{SharpI}} the following system
\begin{subequations}
\begin{alignat*}{3}
%	\label{sharpIn:1:p}
			- \hat\Delta \hat\mu &=
		\frac{(\tilde x)^2}{m_+ \tilde \mu }\Sp -  \frac{(\tilde x)^2\rho_+}{m_+} \hat \mu  \quad && \text{in $\hat \Omega^+$},
		\\
	%	\label{sharpIn:1:m}
	 -  \hat \Delta \hat \mu &= \frac{(\tilde x)^2}{m_- \tilde \mu } \Sm - \frac{(\tilde x)^2\rho_-}{m_-}
		\hat \mu \quad && \text{in $ \hat \Omega^-$,}
	\\
%	\label{sharpIn:2}
	 \hat \mu &=  \frac{\gamma \beta \an{\hat \kappa}}{2\tilde \mu \tilde x} 
	\qquad && \text{on $\hat \Sigma$,}
	\\
%	\label{sharpIn:3}
	 \jump{\hat \mu} &= 0
	\qquad && \text{on $\hat \Sigma$,}
	\\
%	\label{sharpIn:4}
	 - 2 { \cal \hat V} & = \tfrac{\tilde t\tilde \mu } {(\tilde x)^2}\jump{m\hat \nabla \hat\mu} \cdot {\hat \bnu}
	+\tfrac{\tilde t}{\tilde x} S_I
	\qquad && \text{on ${\hat{\Sigma}}$,}
	\\
%	\label{sharpIn:5}
	 \dn \hat \mu &= 0
	\qquad && \text{on ${\partial \hat \Omega}$,}
\end{alignat*}
\end{subequations}
{with $\hat {\calV}$ being the normal velocity of the interface $\hat \Sigma$ in the direction of the normal $\hat \bnu$.}
To obtain simple nondimensional equations, 
\anold{and on assuming that $\rho_->0$ and $S_- >0$,}
we set
\begin{equation*}
	\tilde x =\sqrt{\frac{{m_-}}{\rho_-}}, \quad \tilde \mu = \frac{(\tilde x)^2 S_- }{m_-}= \frac{S_-}{\rho_-}, \quad \tilde t =
	 \frac{(\tilde x)^2}{\tilde \mu m_-} =\frac 1{S_-} .
\end{equation*}
We now define the nondimensional parameter
\begin{equation*}
	\beta^* =\frac{\beta}{\tilde x} \frac 1{\tilde \mu}=
	 \frac{\beta \rho_-^{\tfrac 32}}{m_-^{\frac12} S_-}= \frac {c_{l}}{\tilde x},
\end{equation*}
where we call
\begin{equation}
	\label{cap:lenght}
	c_{l}=  \beta \frac{\rho_-}{S_-}\anold{,}
\end{equation}
in analogy to solidification problems\anold{,} the modified {\it capillary length}.
In addition, we introduce the relative mobility $m^*$,  the relative reaction coefficients $S^*$ and $\rho^*$, and the nondimensional interface reaction term $S_I^*$ as follows
\begin{equation*}
	m^*=\frac {m_+} {m_-},\quad S^*=\frac {S_+} {S_-},\quad
	\rho^*=\frac {\rho_+} {\rho_-}, \quad S_I^* =\frac{\tilde t}{\tilde x}S_I= \frac{1}{S_-} \sqrt{\frac{\rho_-}{m_-}} S_I.
\end{equation*}
We then obtain, dropping the hat notation for convenience, 
\begin{subequations}
\begin{alignat*}{3}
		%\label{sharpIIn:1:m}
			-  m^*\Delta  \mu &=  S^* -
	\rho^* \mu  \quad && \text{in $  \Omega^+$,}
	\\
%	\label{sharpIIn:1:p}
	-  \Delta \mu &=
		 1 -  \mu \quad && \text{in $ \Omega^-$},
		\\
	%\label{sharpIIn:2}
	 \mu & = \frac{\gamma \beta^*\an{\kappa}}2 
	\qquad & &\text{on $ \Sigma$,}
	\\
%	\label{sharpIIn:3}
	\jump{ \mu} &= 0
	\qquad && \text{on $ \Sigma$,}
	\\
%	\label{sharpIIn:4}
	 - 2 { \cal  V} &= m^*\nabla \mu_+ \cdot { \bnu} -
	\nabla \mu_- \cdot { \bnu}
	+ S_I^*
	\qquad && \text{on $\Sigma$,}
	\\
%	\label{sharpIIn:5}
	 \dn \mu &=0
	\qquad && \text{on $\partial \Omega$,}
\end{alignat*}
\end{subequations}
and observe that the evolution critically depends on the nondimensional number $\beta^*$ which relates the influence of surface tension to a generalized supersaturation stemming from chemical reactions.

\section{Well-posedness}
In this section, we address \anold{the} well-posedness of the system \anold{\eqref{eq:sys21}}, aiming to cover a wide spectrum of scenarios. Specifically, we aim to accommodate various configurations without relying on the specific structure of the source \anold{term} $S_\eps$, as long as its growth is under control. Furthermore, in our analysis we can include rather general potentials\anold{,} provided they are regular, nonsingular and \anold{exhibit} polynomial growth.
Let us first specify the notation we need for the well-posedness result.
\subsection{Notation}
Let $\Omega$ be a bounded domain in $\mathbb{R}^d$, $d\in \{2,3\}$, with boundary $\partial \Omega$.  The Lebesgue measure of $\Omega$ is denoted by $|\Omega|$, while the Hausdorff measure of $\partial \Omega$ is denoted by $|\partial \Omega|$.

For any Banach space $X$, its norm is represented as $\| \cdot \|_X$, its dual space as $X^*$, and the duality pairing between $X^*$ and $X$ is denoted by $\langle \cdot, \cdot \rangle_X$. In the case where $X$ is a Hilbert space, the inner product is denoted by $(\cdot,\cdot)_X$.

For each $1 \leq p \leq \infty$ \an{and} $k \geq 0$ the standard Lebesgue and Sobolev spaces defined on $\Omega$ are denoted as $L^p(\Omega)$ and $W^{k,p}(\Omega)$, with their respective norms $\| \cdot \|_{L^p(\Omega)}$ and $\| \cdot \|_{W^{k,p}(\Omega)}$. For simplicity we \anold{may often} use $\| \cdot \|_{L^p}$ instead of $\| \cdot \|_{L^p(\Omega)}$, and employ similar shorthand \anold{notation} for other norms. We adopt the convention $H^k(\Omega) := W^{k,2}(\Omega)$ for all $k \in \mathbb{N}$, and denote the mean value of a functional $h \in \Vp$ as
\[
h_\Omega := \frac{1}{|\Omega|} \langle h,1 \rangle_{H^1}.
\]
We now introduce a tool commonly employed in the investigation of problems associated with equations of Cahn--Hilliard type. Given $\phi \in \Vp$, we seek $u \in \Hx1$ such that
\begin{align}
	\label{weak:neu}
	\int_{\Omega} \nabla u \cdot \nabla v  = \langle \phi , v \rangle_{H^1},
	\quad v \in \Hx1.
\end{align}
This corresponds to the standard weak formulation of the homogeneous Neumann problem for the Poisson equation $-\Delta u = \phi$ for $\phi \in \Lx2$. The solvability of \eqref{weak:neu} for $\phi \in \Vp$ relies on the condition that $\phi$ possesses a zero mean value, that is, $\phi_\Omega=0$. If this condition is satisfied, a unique solution with a zero mean value exists and the operator ${\cal N}: \text{dom}({\cal N})= \{\phi \in \Vp : \phi_\Omega = 0\} \to \{u \in \Hx1 : u_\Omega = 0\}$ defined by mapping $\phi$ to the unique solution $u$ to \eqref{weak:neu} with $u_\Omega = 0$ is well-defined. This operator yields an isomorphism between the mentioned spaces. Additionally, the norm
\begin{align*}
	\phi \mapsto \|\phi\|_{*}^2 := \|\nabla {\cal N} (\phi - {\phi_\Omega})\|^2_{L^2} + |\phi_\Omega|^2,
	\quad
	\phi \in \Vp,
\end{align*}
 is proven to define a Hilbert norm in $\Vp$ \an{that} is equivalent to the standard dual norm. From these definitions, it directly follows that
 \begin{equation*}
 \begin{alignedat}{3}
  &\int_{\Omega} \nabla {\cal N}\phi \cdot \nabla v  = \langle \phi , v \rangle_{H^1} \quad &&\text{for every $\phi \in \text{dom}({\cal N})$ and $v \in \Hx1$},\\
  &\langle \phi , {\cal N}\zeta \rangle_{H^1} = \langle \zeta , {\cal N}\phi \rangle _{H^1}\quad&& \text{for every $\phi, \zeta \in \text{dom}({\cal N})$}, \\
  &\langle \phi , {\cal N}\phi \rangle _{H^1}= \norma{\nabla {\cal N}\phi}_{L^2}^2  = \|\phi\|_{*}^2 \quad&& \text{for every $\phi \in \text{dom}({\cal N})$}.
\end{alignedat}
\end{equation*}
Moreover, it is established that
\begin{align*}
	\int_{0}^{t} \langle \partial_t v(s) , {\cal N} v(s) \rangle_{H^1} \, ds
	= \int_{0}^{t} \langle v(s) , {\cal N}(\partial_t v(s)) \rangle_{H^1} \, ds
	= \frac{1}{2} \|v(t)\|_{*}^2 - \frac{1}{2} \|v(0)\|_{*}^2
\end{align*}
for every $t \in [0, T]$ and $v \in \H1 {\Vp}$ such that $v_\Omega(t) = 0$ for every $t \in [0,T]$.

\subsection{Assumptions} For the well-posedness, we require the following assumptions.
\begin{enumerate}[label=$\boldsymbol{(\mathrm{A \arabic*})}$, ref =$\boldsymbol{({\mathrm{A \arabic*}})}$]
	\item \label{ass:wp:constants}
	The symbols $K_\pm$ and $S_\pm$ denote real valued constants, whereas $\beta$ and $\eps$ denote positive constants.
	
	\item \label{ass:wp:pot}
	The potential $\psi : \erre \to [0,\infty)$ is twice differentiable and can be decomposed as $\psi=\psi_1 + \psi_2$, with $\psi_1$ convex and $\psi_2$ a quadratic perturbation. Namely, we require that there exist a positive constant $C_1$ 
	%and $C_2$ 
	such \an{that it} holds
		\begin{align*}
		%	\psi(r)&  \geq C_1 (|r|^2+1), \quad 
		%	\text{and} \quad 
			|\psi_2(r)|  \leq C_1 (|r|^2 +1)\anold{,}
			\quad 
%			\text{	for all $r \in \erre,$}
			\an{r \in \erre,}
	\end{align*}%
and in  addition, we require
	\begin{align*}
		\forall \eta>0 \, \exists \,C_\eta: \anold{\forall r \in \erre} \quad |\psi'(r)| & \leq \eta \psi(r) + C_\eta .
	\end{align*}
%and
%		\begin{align*}
%	\psi(r)&  \geq C_1 (|r|^2+1), \quad 
%	\text{and} \quad 
%	|\psi_2(r)|  \leq C_2 (|r|^2 +1)\anold{,}
%	\quad \text{	for all $r \in \erre.$}
%	\end{align*}}%
	
	\item \label{ass:wp:source}
	We require the source $S_\eps$ to be Lipschitz continuous. Consequently, there exists a positive constant $C_S$ such that
	\begin{align*}
		|S_\eps(r)|
		\leq
		C_S (|r|+1),
		\quad
		r \in \RRR.
	\end{align*}
	\item \label{ass:wp:mob}
	The mobility function $m:\erre \to \erre$ is \anold{continuous and} there exist positive constants $m_*$ and $M^*$ such that
	\begin{align*}
		0 <m_* \leq m(r) \leq  M^*,
		\quad r \in \erre.
	\end{align*}
	
\end{enumerate}
Let us notice that assumption \eqref{ass:pot:growth} restricts the class of admissible double-well potentials, but still includes the quartic potential in \eqref{quartic}.
\last{For this latter, referring to \ref{ass:wp:pot}, we employ the splitting 
\begin{align*}
	\psi_1(r) = \tfrac 14 r^4,
	\quad 
	\psi_2(r) = \tfrac 14 (1- 2 r^2),
	\quad 
	r \in \erre.
\end{align*}
}

Here is our main result.
\begin{thm}	\label{THM:WP}
	Suppose that \ref{ass:wp:constants}--\ref{ass:wp:mob} are fulfilled. Then, for every given $\ph_0\in {\Hx1}$ there exists a weak solution \an{$(\ph,\mu)$} to \anold{\eqref{eq:sys21}} such that
	\begin{align*}
	\ph & \in \H1 \Vp \cap \L\infty {\Hx1} \cap \L2 {\Hx2},
	\\
	\mu & \in \L2 {\Hx1},
\end{align*}
and satisfy\anold{ing}
\begin{align*}
	& \<\dt \ph, v>_{{H^1}}
	+ \iO m(\ph)  \nabla \mu \cdot \nabla v = \iO S_\eps(\ph) v,
	\\
	& \iO \mu v =
	\beta\eps  \iO \nabla \ph \cdot \nabla v
	+ \frac \beta \eps \iO \psi'(\ph)v,
\end{align*}
for every $v \in \Hx1 $ and almost every $t \in(0,T)$, along with attainment of the initial condition $\varphi(0) = \varphi_0$ holding for almost every $x \in \Omega$.

Moreover, let $\{(\ph_i,\mu_i)\}_i$, $i=1,2$, denote two arbitrary solutions to \anold{\eqref{eq:sys21}} associated \an{with} initial data $\ph_{0,i}\in \Hx1$, $i=1,2$ and to \anold{a} constant mobility $m$. 
In addition, let \an{us assume that there exists $p \in [1,7)$ such that} $\psi_1'$ \an{satisfies} the following pointwise growth condition 
\begin{align}
	\label{ass:pot:growth}
	|\psi_1'(r)-\psi_1'(s)| 
	\leq 
	C (1+ |r|^p+|s|^p) |r-s|,
	\quad r,s \in \erre\an{.}
%	\quad \an{\text{for any $p \in [1,7)$.}}
\end{align}
Then, it holds that
\begin{align*}
	& \norma{(\ph_1-\ph_2)-(\ph_1-\ph_2)_\Omega}_{\L\infty\Vp \cap \L2 {\Hx1} }
	+ \norma{(\ph_1)_\Omega-(\ph_2)_\Omega}_{L^\infty(0,T)}
%	\\ & \qquad 
%	{	+ \norma{(\ph_1)_\Omega-(\ph_2)_\Omega}_{L^\infty(0,T)}^{1/2}}
	\\ & \quad
	\leq \anold{C\an{^*}} 
	\big(
	{\norma{(\ph_{0,1} -\ph_{0,2}) - ((\ph_{0,1})_\Omega - (\ph_{0,2})_\Omega )}_*}
%	{\norma{(\ph_{0,1} - (\ph_{0,1})_\Omega ) -(\ph_{0,2} - (\ph_{0,2})_\Omega )}_*}
	+|(\ph_{0,1})_\Omega-(\ph_{0,2})_\Omega|
%		{	+ |(\ph_{0,1})_\Omega-(\ph_{0,2})_\Omega|^{1/2}}
\big),
\end{align*}%
for a positive constant $\anold{C}\an{^*}$ just depending on $\Omega$, $T$ and the nonlinearity $\psi$. Consequently, 
\anold{under these conditions the weak solution to \eqref{eq:sys21} is unique.}
\end{thm}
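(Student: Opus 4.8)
The plan is to establish the two assertions in turn: the existence of a weak solution with the stated regularity by a Faedo--Galerkin scheme combined with energy-type a priori estimates and a compactness argument, and then the continuous-dependence estimate---which in particular yields uniqueness---by a difference estimate measured in the dual norm $\|\cdot\|_*$ built on the operator ${\cal N}$.

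For existence I would discretize in the eigenbasis of the Neumann Laplacian, solve the resulting ODE system, and derive bounds uniform in the discretization. The basic a priori estimate is the energy identity obtained by testing the first equation with $\mu$ and the second with $\dt\ph$, namely
\begin{equation*}
	\frac{d}{dt}{\cal E}(\ph) + \iO m(\ph)|\nabla\mu|^2 = \iO S_\eps(\ph)\,\mu .
\end{equation*}
The genuinely new feature compared with the mass-conserving Cahn--Hilliard equation is the right-hand side together with the loss of mass conservation: integrating the first equation gives $\tfrac{d}{dt}\ph_\Omega = |\Omega|^{-1}\iO S_\eps(\ph)$, which by the linear growth in \ref{ass:wp:source} is controlled by $\|\ph\|_{\Lx2}$. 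To close the energy estimate I would split $\mu=(\mu-\mu_\Omega)+\mu_\Omega$: the oscillating part is absorbed into $m_*\|\nabla\mu\|_{\Lx2}^2$ through Poincar\'e's and Young's inequalities, while the delicate term is the mean contribution $\mu_\Omega\iO S_\eps(\ph)$, in which $\mu_\Omega=\tfrac{\beta}{\eps|\Omega|}\iO\psi'(\ph)$ is estimated by the growth hypothesis $|\psi'(r)|\le\eta\psi(r)+C_\eta$ from \ref{ass:wp:pot}. Choosing $\eta$ suitably small and using that ${\cal E}(\ph)$ controls both $\|\nabla\ph\|_{\Lx2}^2$ and $\iO\psi(\ph)$ (hence, via Poincar\'e, the oscillation of $\ph$), one reduces the estimate, after absorbing the top-order terms, to a differential inequality for the pair $({\cal E}(\ph),|\ph_\Omega|^2)$ to which Gronwall's lemma applies. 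Elliptic regularity for $-\beta\eps\Delta\ph=\mu-\tfrac{\beta}{\eps}\psi'(\ph)$ then upgrades $\ph$ to $\L2{\Hx2}$, a comparison in the first equation yields $\dt\ph\in\L2{\Vp}$, and the Aubin--Lions lemma provides strong $L^2(Q)$ convergence of the approximations, which suffices to pass to the limit in the continuous nonlinearities $m(\ph)$, $\psi'(\ph)$ and $S_\eps(\ph)$.

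For the continuous-dependence statement I specialize to a constant mobility $m$ and set $w:=\ph_1-\ph_2$, $\zeta:=\mu_1-\mu_2$, with zero-mean part $\ov w:=w-w_\Omega$. Testing the difference of the first equations with ${\cal N}\ov w$---admissible since $\ov w$ has zero mean---and using the properties of ${\cal N}$ recorded earlier, the parabolic term becomes $\tfrac12\tfrac{d}{dt}\|\ov w\|_*^2$, whereas $m\iO\nabla\zeta\cdot\nabla{\cal N}\ov w=m\iO\zeta\,\ov w$; inserting here the difference of the second equations tested with $\ov w$ (and noting $\nabla w=\nabla\ov w$) gives
\begin{equation*}
	\tfrac12\tfrac{d}{dt}\|\ov w\|_*^2 + m\beta\eps\|\nabla\ov w\|_{\Lx2}^2 + \tfrac{m\beta}{\eps}\iO(\psi'(\ph_1)-\psi'(\ph_2))\,\ov w = \iO(S_\eps(\ph_1)-S_\eps(\ph_2))\,{\cal N}\ov w .
\end{equation*}
Writing $\ov w=w-w_\Omega$ and $\psi'=\psi_1'+\psi_2'$, the contribution $\iO(\psi_1'(\ph_1)-\psi_1'(\ph_2))\,w\ge0$ is nonnegative by monotonicity of $\psi_1'$ and is kept on the left; the quadratic-perturbation part $\psi_2'$ is affine and hence bounded by $C\|w\|_{\Lx2}^2$, and the right-hand side is estimated by $C\|w\|_{\Lx2}\|{\cal N}\ov w\|_{\Lx2}\le C\|w\|_{\Lx2}\|\ov w\|_*$ using the Lipschitz bound in \ref{ass:wp:source}. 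Constant mobility is essential here: for a concentration-dependent $m$ the flux difference $m(\ph_1)\nabla\mu_1-m(\ph_2)\nabla\mu_2$ would not collapse to $m\iO\zeta\,\ov w$, and the argument would break down.

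The main obstacle is the single surviving term $-\tfrac{m\beta}{\eps}\,w_\Omega\iO(\psi_1'(\ph_1)-\psi_1'(\ph_2))$, which is precisely where the polynomial growth hypothesis \eqref{ass:pot:growth} and the restriction $p<7$ enter. By \eqref{ass:pot:growth} and H\"older's inequality with exponents $\tfrac65,6$,
\begin{equation*}
	\Big|\iO(\psi_1'(\ph_1)-\psi_1'(\ph_2))\Big| \le C\iO(1+|\ph_1|^p+|\ph_2|^p)\,|w| \le C\,g(t)\,\|w\|_{\Lx6}, \qquad g:=1+\|\ph_1\|_{\Lx{6p/5}}^p+\|\ph_2\|_{\Lx{6p/5}}^p,
\end{equation*}
and in $d=3$ one has $\|w\|_{\Lx6}\le C(\|\nabla\ov w\|_{\Lx2}+\|w\|_{\Lx2})$. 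Keeping $w_\Omega$ as the scalar Gronwall unknown and applying Young's inequality, the gradient factor is absorbed into $m\beta\eps\|\nabla\ov w\|_{\Lx2}^2$, leaving a contribution of the form $C\,g(t)^2\,|w_\Omega|^2$ (the lower-order pieces being handled by Poincar\'e's inequality on $\ov w$ and the splitting $\|w\|_{\Lx2}\le\|\ov w\|_{\Lx2}+C|w_\Omega|$). The decisive point is the time-integrability $g\in L^2(0,T)$: by the Gagliardo--Nirenberg inequality in three dimensions $\|\ph_i\|_{\Lx{6p/5}}\le C\|\ph_i\|_{\Hx2}^{\theta}\|\ph_i\|_{\Hx1}^{1-\theta}$ with $\theta=\tfrac12-\tfrac{5}{2p}$, so that
\begin{equation*}
	\int_0^T g^2\,dt \le C\Big(1+\|\ph_1\|_{\L\infty{\Hx1}}^{2p(1-\theta)}\int_0^T\|\ph_1\|_{\Hx2}^{2p\theta}\,dt + \text{(idem for $\ph_2$)}\Big),
\end{equation*}
and since $2p\theta=p-5$, this is finite exactly because the hypothesis $p<7$ guarantees $p-5<2$, whence $\|\ph_i\|_{\Hx2}^{p-5}$ is integrable over $(0,T)$ thanks to the regularity $\ph_i\in\L\infty{\Hx1}\cap\L2{\Hx2}$ from the first part. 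Coupling the resulting inequality for $\|\ov w\|_*^2$ with the elementary bound $\tfrac{d}{dt}|w_\Omega|^2\le C|w_\Omega|(\|\ov w\|_{\Lx2}+|w_\Omega|)$ coming from the mean equation, Gronwall's lemma applied to $t\mapsto\|\ov w(t)\|_*^2+|w_\Omega(t)|^2$ yields the asserted continuous-dependence estimate; uniqueness follows at once by choosing identical initial data.
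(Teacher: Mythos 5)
Your continuous-dependence argument is essentially the paper's own: the same testing of the difference equations with ${\cal N}$ applied to the zero-mean part, the same splitting of $\psi'$ into a monotone part plus a quadratic perturbation, the same H\"older $\tfrac65$--$6$ treatment of the surviving term $\ph_\Omega\iO(\psi_1'(\ph_1)-\psi_1'(\ph_2))$, and the same integrability check that pins down $p<7$ (your Gagliardo--Nirenberg exponent $2p\theta=p-5$ is the paper's interpolation embedding $L^\infty(0,T;H^1)\cap L^2(0,T;H^2)\hookrightarrow L^{4q/(q-6)}(0,T;L^q)$ in disguise). That half is correct and needs no further comment.

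The existence part, however, has a genuine gap precisely at the step you call delicate. You propose to close the energy estimate by bounding $\mu_\Omega=\tfrac{\beta}{\eps|\Omega|}\iO\psi'(\ph)$ via $|\psi'(r)|\le\eta\psi(r)+C_\eta$ and then applying Gronwall to the pair $({\cal E}(\ph),|\ph_\Omega|^2)$. Under the stated hypothesis \ref{ass:wp:pot} this gives $|\mu_\Omega|\le C(\eta\,{\cal E}(\ph)+C_\eta)$, while $|\iO S_\eps(\ph)|\le C(1+\|\ph\|_{L^1(\Omega)})\le C\bigl(1+{\cal E}(\ph)^{1/2}+|\ph_\Omega|\bigr)$, so the mean contribution is of order $\eta\bigl(1+{\cal E}^{1/2}+|\ph_\Omega|\bigr){\cal E}$, i.e.\ it contains ${\cal E}^{3/2}$ and $|\ph_\Omega|\,{\cal E}$. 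Neither term can be absorbed by $\tfrac{d}{dt}{\cal E}$ or $\|\nabla\mu\|_{L^2(\Omega)}^2$, and a differential inequality of the form $y'\le \eta\,y^{3/2}+C_\eta(1+y)$ does not yield a bound on an arbitrary $[0,T]$ by ``choosing $\eta$ small'': since $C_\eta\to\infty$ as $\eta\to0$, the bootstrap does not close, and Gronwall simply does not apply to a superlinear right-hand side. (For the quartic potential the argument can be rescued because there $\iO\psi(\ph)$ controls $\|\ph\|_{L^4(\Omega)}^4$, so $|\mu_\Omega|\lesssim 1+{\cal E}^{3/4}$ and $|\iO S_\eps(\ph)|\lesssim 1+{\cal E}^{1/4}$, making the product linear; but the theorem is stated for general potentials satisfying \ref{ass:wp:pot}, for which $\iO\psi$ need not control any Lebesgue norm of $\ph$.) A related issue appears when you invoke elliptic regularity: to get $\ph\in L^2(0,T;H^2(\Omega))$ from the second equation you need $\psi'(\ph)\in L^2(Q)$, which \ref{ass:wp:pot} together with the energy bound alone does not provide.

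The paper avoids all of this by inserting a preliminary estimate \emph{before} the energy estimate: test the first equation with $M'(\ph)$, where $M''=1/m$, and the second with $-\Delta\ph$. The cross terms involving $\nabla\mu$ cancel, convexity of $\psi_1$ makes $\iO\psi_1''(\ph)|\nabla\ph|^2$ a good sign, and everything remaining is at most quadratic in $\|\ph\|_{L^2(\Omega)}$, so Gronwall gives $\ph\in L^\infty(0,T;L^2(\Omega))\cap L^2(0,T;H^2(\Omega))$ directly. Only then is $\mu_\Omega$ bounded in $L^2(0,T)$ (via $H^2(\Omega)\hookrightarrow L^\infty(\Omega)$), after which the energy identity closes linearly. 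You need this step, or an equivalent substitute, for your existence argument to go through at the stated level of generality.
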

%
%Let us notice that assumption \eqref{ass:pot:growth} restricts the class of admissible double-well potentials, but still includes the quartic potential in \eqref{quartic}.
%
\begin{proof}[Proof of Theorem \ref{THM:WP}]
{We begin with the existence part of the theorem.  In the subsequent discussion, we adopt a formal approach, leveraging standard procedures to derive estimates for the solution. While these computations are formal, they suggest that the same estimates can be rigorously applied to the $k$-dimensional system obtained via a Faedo--Galerkin scheme, constructed using the first $k$ eigenfunctions of the Laplace operator with homogeneous \an{Neumann} boundary conditions. These bounds can then be used to pass to the limit as $k\to\infty$, thereby constructing a solution to the problem that satisfies \anold{\eqref{eq:sys21}}. A rigorous proof can be easily adapted within this approximation framework.}
%In the subsequent discussion, we adopt a formal approach, leveraging standard procedures. However, a rigorous proof can be easily adapted within an appropriate approximation framework, such as a Faedo--Galerkin approach. \anold{INSERT COMMENTS...}  
\an{Furthermore, in what follows, since we will need to make numerous estimates, we will agree to use $C$ to denote any nonnegative constant depending on the system's data, which may change its value from line to line.}
	
{\it First estimate:}
Before starting with the {proper} estimates, let us introduce a new function related to the  {phase-dependent} mobility  {function} $m$. We set
\begin{align*}
	M'(z) = \int_0^z \frac 1{m(s)} ds,
	\quad z \in \erre,
\end{align*}
{call $M$ a suitable antiderivative of it} and notice that $M \in C^2(\erre)$ and  $M''(r)=\frac 1 {m(r)}$, $r \in \erre$.
Due to the bounds on $m$ in \ref{ass:wp:mob}, we also have that there exist two positive constants $c_1$ and $c_2$ such that
\begin{align*}
	c_1(1+|r|)
	\leq
	|M'(r)|
	\leq
	c_2(1+|r|),
	\quad r \in \erre.
\end{align*}
We \an{then} test \eqref{sys:1} \an{with} $M'(\ph)$, \eqref{sys:2} \an{with} $-\Delta \ph$ and add the resulting identities to obtain
	\begin{align*}
		\frac 12 \frac d {dt} \norma{M(\ph)}^2_{L^2}
		+ \beta \eps \norma{\Delta \ph}^2_{L^2}
		+ \frac \beta \eps \iO \psi_1''(\ph)|\nabla \ph|^2
		=\iO S_\eps(\ph) M'(\ph)
		- \frac \beta \eps \iO \psi_2''(\ph)|\nabla \ph|^2\an{,}
	\end{align*}
	\an{where we notice that the third term on the \lhs\ is nonnegative due to \ref{ass:wp:pot}.}
	Now, since $M'$ is growing linearly,  using \ref{ass:wp:constants}--\ref{ass:wp:mob}, we readily infer from Young's inequality that the \rhs\ is bounded \anold{from} above by
	$C (\norma{\ph}^2_{L^2} +1)$. Using Gr\"onwall's inequality then produces
	\begin{align*}
		\norma{M(\ph)}_{\L\infty {\Lx2} }
		+ \norma{\Delta \ph}_{\L2 {\Lx2} } \leq C.
	\end{align*}
Besides, since $M'$ is growing linearly, $M$ grows quadratic{ally,} so that from the above inequality we obtain, using also elliptic regularity theory, that
	\begin{align*}
		\norma{\ph}_{\L\infty {\Lx2} \cap \L2 {\Hx2}} \leq C.
	\end{align*}
	
%{\it First estimate:}
%	We test \eqref{sys:1} by $\ph$, \eqref{sys:2} by $-\Delta \ph$ and add the resulting identities to obtain
%	\begin{align*}
%		\frac 12 \frac d {dt} \norma{\ph}^2
%		+ \beta \eps \norma{\Delta \ph}^2
%		+ \frac \beta \eps \iO F_1''(\ph)|\nabla \ph|^2
%		=\iO S_\eps(\ph) \ph
%		- \frac \beta \eps \iO F_2''(\ph)|\nabla \ph|^2.
%	\end{align*}
%	Due to \ref{ass:wp:pot}--\ref{ass:wp:mob}, we readily infer from Young's inequality that the \rhs\ is bounded above by
%	$C (\norma{\ph}^2 +1)$. Using Gronwall's lemma along with elliptic regularity theory  then produces
%	\begin{align*}
%		\norma{\ph}_{\L\infty H \cap \L2 {\Hx2}} \leq C.
%	\end{align*}
	
	{\it Second estimate:}
	Next, recalling \ref{ass:wp:pot},
%	 a comparison argument in \eqref{sys:2} produces
upon testing \eqref{sys:2} with $\frac 1{|\Omega|}$ yields 
	\begin{align*}
		\norma{\mu_\Omega}_{L^2(0,T)} \leq C,
\end{align*}	
where we also used the Sobolev embedding $H^2(\Omega) \emb \Lx\infty$.

		{\it Third estimate:}
		{We now perform the usual energy estimate in the context of the Cahn--Hilliard equation by testing}
%		Next, we test 
\eqref{sys:1} \anold{with} $\mu$, \eqref{sys:2} \anold{with} $-\dt \ph$ and adding the resulting identities \an{to} obtain
		\begin{align*}
			\frac d{dt} {\cal E}(\ph)
			+ \norma{\nabla \mu}^2_{L^2}
			= \iO S_\eps(\ph) \mu
			= \iO S_\eps(\ph) (\mu-\mu_\Omega)
			+ \iO S_\eps(\ph)\mu_\Omega = I_1 + I_2.
		\end{align*}
		Now, {employing the Young inequality} and \ref{ass:wp:source}, we infer that
		\begin{align*}
			|I_1| &\leq C ( \norma{\ph}_{L^2}+1)\norma{\nabla \mu}_{L^2}
			\leq \tfrac 12 \norma{\nabla \mu}^2_{L^2}
			+ C {(}\norma{\ph}^2_{L^2}{+1)},
			\\
			|I_2| &\leq C
			( \norma{\ph}_{L^2}+1)|\mu_\Omega|
			\leq
			C {(}\norma{\ph}^2_{L^2}{+1)}
			+ |\mu_\Omega|^2.
		\end{align*}
		Thus, using {the Poincar\'e--Wirtinger inequality} and also the previous estimates readily entails that
		\begin{align*}
			\norma{\ph}_{\L\infty {\Hx 1}}
			+ \norma{\mu}_{\L2 {\Hx 1}}
			\leq C.
		\end{align*}
		
	{\it Fourth estimate:}
		Finally, it is standard to \an{infer from the previous estimates and the weak formulation \eqref{sys:1} that}
		\begin{align*}
			\norma{\dt \ph}_{\L2 \Vp}
			\leq C.
		\end{align*}
This concludes the existence part of the proof. 
%\anold{In the appropriate approximation framework,  using compactness results and the estimates obtained, we pass to the limit as $k\to \infty$, ensuring the existence of a solution to \anold{\eqref{eq:sys21}}.}

Moving to the uniqueness part, we consider two solutions $\{(\ph_i,\mu_i)\}_i$, $i=1,2$, associated to initial data $\ph_{0,i}$, $i=1,2$ and recall that the mobility is assumed to be \an{constant}. Then, we introduce the notation
\begin{align*}
	\ph= \ph_1-\ph_2,
	\quad
	\mu= \mu_1- \mu_2,
	\quad \ph_0=\ph_{0,1}-\ph_{0,2},
\end{align*}
and consider the system \anold{\eqref{eq:sys21}} written for the differences. Considering the difference of \eqref{sys:1}, recalling the Lipschitz continuity of $S_\eps$, and testing it \an{with} 
%$\ph_\Omega$ produces
%\begin{align}
%	\label{cd:mean}
%	\frac 12 \frac d{dt}|\ph_\Omega|^2
%	\leq
%	C  |\ph_\Omega|^2
%	+C {\norma{\ph}^2_{L^2}}.
%\end{align}
$\ph_\Omega\an{=(\ph_{1})_\Omega-(\ph_{2})_\Omega}$
produces
\begin{align}
	\label{cd:mean}
	\frac d{dt}\Big( \frac 12 |\ph_\Omega|^2 
%	+ |\ph_\Omega|
	\Big)
	\leq
	C  |\ph_\Omega|^2
%	+ C  |\ph_\Omega|
	+ C \norma{\ph}^2_{L^2}.
\end{align}
Then, we consider the difference of \eqref{sys:1} minus its mean value and test it \anold{with} ${\cal N}(\ph-\ph_\Omega)$,
the difference of \eqref{sys:2} \anold{and test it with}  $-(\ph-\ph_\Omega)$ and, upon adding the resulting equalities, we obtain that
\begin{equation}\label{uniq:est}
\begin{aligned}
	& \frac 12 \frac d{dt}\norma{\ph-\ph_\Omega}^2_*
	+ \iO {(\mu-\mu_\Omega)(\ph-\ph_\Omega)} \\
    & \quad = \iO \big( S_\eps (\ph_1) - S_\eps(\ph_2) - (S_\eps (\ph_1) - S_\eps(\ph_2) )_\Omega\big) {\cal N}(\ph-\ph_\Omega)
	\\ & \quad \leq
	C \norma{\ph}^2_{L^2}
	+ C \norma{\ph-\ph_\Omega}^2_* .
\end{aligned}
\end{equation}
Besides, {it holds that}
\begin{align*}
	\iO {(\mu-\mu_\Omega)(\ph-\ph_\Omega)}
	= \beta \eps \norma{\nabla \ph}^2_{L^2}
	+ \frac \beta \eps \iO (\psi'(\ph_1)-\psi'(\ph_2))(\ph-\ph_\Omega).
\end{align*}
For the last term, using \ref{ass:wp:pot}, we have
\begin{align*}
	& \frac \beta \eps \iO (\psi'(\ph_1)-\psi'(\ph_2))(\ph-\ph_\Omega)
	\\
 & \quad= \underbrace{\frac \beta \eps \iO (\psi_1'(\ph_1)-\psi_1'(\ph_2))\ph}_{\geq 0}
	+ \underbrace{\frac \beta \eps \iO (\psi'_2(\ph_1)-\psi'_2(\ph_2))\ph}_{\geq -C \norma{\ph}^2_{L^2}}
	\\
 & \qquad
	-\frac \beta \eps \iO (\psi_1'(\ph_1)-\psi_1'(\ph_2))\ph_\Omega
	-\underbrace{\frac \beta \eps \iO (\psi_2'(\ph_1)-\psi_2'(\ph_2))\ph_\Omega}_{\geq - C (\norma{\ph}^2_{L^2} + |\ph_\Omega|^2)}.
\end{align*}
We then move the third term on the right-hand side \an{of this latter identity} to the \rhs\ of the estimate \eqref{uniq:est} and continue with the estimation.
Recalling the growth condition in \eqref{ass:pot:growth} and the continuous embedding $\Hx1 \emb \Lx6$ \an{holding in three dimensions}, we apply H\"older's inequality to \an{bound}
%\begin{align*}
%	& 
%	-\frac \beta \eps \iO (\psi_1'(\ph_1)-\psi_1'(\ph_2))\ph_\Omega
%	\leq 
%	C  \left(1+ \norma{|\ph_1|^p}_{L^3} + \norma{|\ph_2|^p}_{L^3} \right) \norma{\ph}_{L^6}\norma{|\ph_\Omega|}_{L^2}
%	\\ & \quad 
%	\leq \frac {\beta \eps}{4} \norma{\ph}^2_{H^1}
%	+ C  \left(1+ \norma{\ph_1}_{L^{3p}}^{2p} + \norma{\ph_2}_{L^{3p}}^{2p} \right) |\ph_\Omega|^2.
%\end{align*}
	\begin{align*}
		& 
		\Big|\frac \beta \eps \iO (\psi_1'(\ph_1)-\psi_1'(\ph_2))\ph_\Omega\Big|
		\leq 
		C  \left(1+ \norma{|\ph_1|^p}_{L^{\frac 65}} + \norma{|\ph_2|^p}_{L^{\frac 65}} \right) \norma{\ph}_{L^6} |\ph_\Omega|
		\\ & \quad 
		\leq \frac {\beta \eps}{4} \norma{\ph}^2_{H^1}
		+ C  \Big(1+ \norma{\ph_1}_{L^{\frac {6p}5}}^{2p} + \norma{\ph_2}_{L^{\frac {6p}5}}^{2p} \Big) |\ph_\Omega|^2.
	\end{align*}
Given that 
\begin{align*}
\ph_i \in \L\infty {\Hx1} \cap \L2 {\Hx2} \emb L^{\frac {4q}{q-6}}(0,T; {\Lx{q}})
\end{align*}
for $i=1,2$ and $q \in [6,\infty)$ in three spatial dimensions, with the convention that $\frac {4q}{q-6}:= +\infty$ when $q=6$, we deduce that any exponent up to $p = 5$ is admissible as  we notice that 
\[
t \mapsto \Lambda(t):=(1+ \norma{\ph_1(t)}_{L^6}^{10} + \norma{\ph_2(t)}_{L^6}^{10} ) \in L^{\infty}(0,T).
\]
	Besides, selecting $q = \frac {6p}5$ in the above interpolation embedding, we infer that the resulting time exponent $\frac {4p}{p/5}$ is strictly bigger than $2p$ for any $p \in (5,7)$, entailing that $t \mapsto \Lambda(t) \in L^1(0,T)$ for any $p \in (5,7)$. Regarding the term  $C \norma{\ph}^2$ on the \rhs\ of \eqref{uniq:est}, we apply an interpolation argument to deduce that
\begin{align*}
	C \norma{\ph}^2_{L^2}
	\leq
	\tfrac  {\beta \eps}{4} \norma{\nabla \ph}^2_{L^2}
	+ C \norma{\ph-\ph_\Omega}^2_*
	+ C |\ph_\Omega|^2.
\end{align*}

Thus, rearranging the terms, and adding the above estimates, we end up with
\begin{align*}
	& \frac 12 \frac d{dt} \Big(\norma{\ph-\ph_\Omega}^2_* +  |\ph_\Omega|^2 	
	%{+ 2|\ph_\Omega|}
	\Big)
	{+\frac {\beta \eps}{2}} \norma{\nabla \ph}^2_{L^2}
	%	\\ &\quad  
	\leq
	C \norma{\ph-\ph_\Omega}^2_*
	%	+ C |\ph_\Omega|^2
	%	\\ & \quad
	{+ C  (\Lambda +1 )|\ph_\Omega|^2.}
\end{align*}
Finally, we integrate over time and
%\anold{recall that $t \mapsto (\norma{\psi_1'(\ph_1(t))}_{L^1} + \norma{\psi_1'(\ph_2(t))}_{L^1}\anold{+1}) \in L^1(0,T)$, and} 
employ Gr\"onwall's inequality to conclude the proof.
%
%\anold{\fbox{Andrea}\, It seems to me that the last term $C  (\norma{\psi_1'(\ph_1)}_{L^1} + \norma{\psi_1'(\ph_2)}_{L^1})|\ph_\Omega|$ is not a proper Gronwall term so far. Using the above testing should fix the issue though as on the \lhs\ we will also have the contribution $\frac d{dt}|\ph_\Omega|$. What do you think?}

\end{proof}

\section{Sharp Interface Limit}
%\comm{[The inner part is the {\it solid} part, so that we choose $+1$ for it.]}
In this section, we conduct a formal asymptotic analysis of \anold{the} system \anold{\eqref{eq:sys21}} as $\eps$ approaches zero for potentials $\psi \in C^2(\erre)$ that fulfil
\begin{align}
	\label{ass:pot:sharp}
	\psi(r)= \psi(-r), \quad r \in \erre,
	\quad
	\psi(\pm1)=\psi'(\pm1)= \psi'(0)=0, \quad \psi''(\pm1)\neq 0.
\end{align}
%\comm{[to say: zero is unstable]}
In addition, we present only the case $r_c=1$. An analysis for $r_c\in (0,1)$ is also possible but will lead to 
more intricate computations in order to define $S_I$ \an{(cf. \eqref{defn:SI})}. However, the following analysis also holds for
$r_c\in (0,1)$ without changes in the case that $L=0$ and $K_+=K_-$.
The method integrates outer and inner expansions into the model equations, solving them stepwise, and defines a region for their
matching.
%Of course, the explicit choice \eqref{quartic} fulfills \ref{ass:wp:pot}.
Further elaboration on the methodology can be found in {the} references \cite{AbelsGG12,GLSS,BDGP}. The following assumptions and conventions are in order:
\begin{itemize}
	\item
		It is assumed that for small values of $\eps$, the domain $\Omega$ can be partitioned into two open subdomains, $\Omega{^\pm_\eps}=\Omega{^\pm}(\eps,t)$, separated by an interface $\Sigma_\eps=\Sigma_\eps(t)$ and  that $\Omega{^+_\eps}$ does not intersect with $\partial \Omega$. We assume that the introduced spatial sets evolve over time, though we opt not to specify the temporal dependence for convenience.
	\item
		It is assumed that there exists a family  of solution to \anold{\eqref{eq:sys21}} that  are sufficiently smooth and exhibit an asymptotic expansion in $\eps\in (0,1)$ within the bulk regions away from $\Sigma_\eps$ (referred to as the {\it outer expansion}), and another expansion in the interfacial region adjacent to $\Sigma_\eps$ (referred to as the {\it inner expansion}).	
		Those will be denoted by $\{(\ph_\eps,\mu_\eps)\}_\eps$ and $\{(\Phi_\eps,\Lambda_\eps)\}_\eps$ {in the following}, respectively.
	\item
	We postulate that the level sets $\an{\{ \ph_\eps =0\}=}\{x\in \Omega : \ph_\eps(x) =0\}$, as $\eps \to 0$, converge to a limiting hypersurface  $\Sigma_0{=\Sigma_0(t)}$  that evolves with a normal velocity $\cal V$ and normal $\bnu$.
\end{itemize}
In summary, we will work in the geometric framework
\begin{align*}
	\Omega = \Omega_\eps^+ \cup \Sigma_\eps \cup \Omega_\eps^-	,
	\quad 
	\Sigma_\eps  = \partial \Omega_\eps^+,
	\quad 
	\Omega_\eps^+= \Omega \setminus \ov{\Omega_\eps^-},
\end{align*}
where we set
\begin{align*}
	\Omega_\eps^+ = \{x \in \Omega: \ph_\eps(t,x) >0\},
	\quad \Omega_\eps^- = \{x \in \Omega: \ph_\eps(t,x) <0\}.
\end{align*}

\subsection{Outer expansion}
In what follows, we suppose that the solution variables $\ph_\eps$ and $\mu_\eps$, far away from the interface, can be expressed as
\begin{align*}
	\ph_\eps = \sum_{i=0}^{\infty} \eps^i \ph_i,
	\quad
	\mu_\eps = \sum_{i=0}^{\infty} \eps^i \mu_i.
\end{align*}
Then, we consider equation \eqref{sys:2} to leading order $\eps^{-1}$ to infer that
\begin{align*}
	- \beta \psi'(\ph_0)=0.
\end{align*}
Here, we accounted for the conditions $\psi(\pm1)=\psi'(\pm1)=0$ in \eqref{ass:pot:sharp}.
Since $\ph_0=0$ is an unstable solution, this leads to $\ph_0=\pm 1$. We then set
\begin{align*}
	\Omega^\pm:=\{x \in \Omega: \ph_0(x) = \pm 1\}.
\end{align*}
To the next order $\eps^0$, equation \eqref{sys:2} yields
\begin{align*}
	\mu_0 = \beta \psi''(\ph_0)\ph_1.
\end{align*}
{Besides, to the same order, s}ince $ \ph_0$ \an{equals $\pm1$} in $\Omega^\pm$, we infer from \eqref{sys:1} that
{we} have
\begin{align*}
	- \div(m(\ph_0)\nabla \mu_0) = S_\pm - K_\pm \ph_1.
\end{align*}
Combining these two equations and recalling $\rho_{\pm} = \frac{K_{\pm}}{\beta \psi''(\pm 1)}$ yields
\begin{align*}
- m_+ \Delta \mu_{0} = S_{+} - \rho_+ \mu_0 & \quad \text{ in } \Omega^+, \\
- m_- \Delta \mu_{0} = S_{-} - \rho_- \mu_0 & \quad \text{ in } \Omega^-{,} 
\end{align*}
{where we recall $m_\pm = m(\pm1)$.}

\subsection{Inner expansion and matching conditions}
To explore the behavior of $\Omega_\eps=\an{\{\ph_\eps=0\}}
%\{x \in \Omega: \ph_\eps(x)=0\}
$ as $\eps \to 0$, we introduce a new coordinate system.
Let $d$ denote the signed distance function to $\Sigma_0$, and define $z= \frac d\eps$ the rescaled distance variable. Additionally, we select $d$ in such a way that $d(\xxx)>0$ in $\Omega^+$ and $d(\xxx)<0$ in $\Omega^-$.
Thus, it follows that $\nabla d = \bnu$ is the unit normal of $\Sigma_0$ and points from $\Omega^-$ towards $\Omega^+$.
Next, let us consider a parametrization of $\Sigma_0$ by arc-length denoted as $g(t, s)${.}
%, and let $\bnu$ represent the unit normal vector of the interface, oriented from $\Omega^-$ towards $\Omega^+$.
 Then, within a tubular neighbourhood of $\Sigma_0$, for a sufficiently smooth function ${f=}f(x)$, we obtain the reparametrization rule
\begin{align*}
	f(x) = f(g(t,s)+\eps z \bnu (g(t,s))) =: F(t,s,z).
\end{align*}
Referring to, \an{e.g.,} \cite{GLSS}, this allows us to write derivatives of $f$ using the new reference coordinates \an{leading} to the following identities
\begin{align}
	\label{f:trans}
	\partial_t f \approx - \frac  1 \eps {\cal V} \partial_z F ,
	\quad
	\nabla_x f \approx \frac 1\eps \partial_z F \bnu +
	\nabla_{\Sigma_0} F,
	\quad
	\Delta_x f \approx \frac 1{\eps^2} \partial_{zz} F - \frac 1\eps \kappa \partial_z F.
\end{align}
Here, $\nabla_{\Sigma_0}$ stands for the surface gradient on $\Sigma_0$, $\kappa=-\div_{\Sigma_0} \bnu$ for the corresponding mean curvature, and the symbol $\approx$ indicates that we omitted higher order terms with respect to $\eps$.
The solution in the inner region is assumed to possess the following expansion
\begin{align*}
	\Phi_\eps
	 = \sum_{i=0}^{\infty} \eps^i \Phi_i,
	\quad
	\Lambda_\eps = \sum_{i=0}^{\infty} \eps^i \Lambda_i.
\end{align*}
The postulated convergence of the level sets $\{\ph_\eps=0\}$ to the hypersurface $\Sigma_0$ translates to the condition
\begin{align}
	\label{ass:phz}
	\Phi_0(t,s,z=0)=0.
\end{align}
Furthermore, we assume 
%\comm{[better if we wrote them as limits?]}
\begin{align*}
\lim_{z \to + \infty} 	\Phi_\eps (t,s,z) =1,
	\quad
\lim_{z \to -\infty} 	\Phi_\eps (t,s,z) =-1.
\end{align*}
With reference to \cite{GLSS}, we employ the following matching conditions
\begin{alignat}{2}
	\label{match:1}
	& \lim_{z \to \pm \infty} \Phi_0 (t,s,z) =	\ph_0^{\pm}(t,\xxx),
	\quad
	&& \lim_{z \to \pm \infty} \Lambda_0 (t,s,z) = \mu_0^{\pm}(t,\xxx),
	\\
	\label{match:2}
	& \lim_{z \to \pm \infty} \partial_z \Phi_0 (t,s,z) = 0,
	\quad
	&& \lim_{z \to \pm \infty} \partial_z \Lambda_0 (t,s,z) = 0,
	\\
	& \label{match:3}
	\lim_{z \to \pm \infty} \partial_z \Phi_1 (t,s,z) = \nabla \ph_0^{\pm}(t,\xxx)\cdot \bnu,
	\quad
	&& \lim_{z \to \pm \infty} \partial_z \Lambda_1 (t,s,z) = \nabla \mu_0^{\pm}(t,\xxx)\cdot \bnu,
\end{alignat}
where $\ph_0^\pm (t,\xxx):= \lim_{\delta\to0} \ph_0 (t,\xxx\pm \delta \bnu)$, $\xxx \in \Sigma_0$, and similarly for $\mu_0$. This allows us to introduce the corresponding jump across $\Sigma_0$ by using the notation
\begin{align*}
	\jump{\ph_0}=\jump{\ph_0(t,\xxx))}:=\ph_0^+ (t,\xxx)-\ph_0^- (t,\xxx),
\end{align*}
and similarly for $\mu_0$.
%\comm{[This has already been introduced at the end of page 5. Should we repeat it or cite it with a formula?]}

\subsection{Leading order expansions in the interfacial region}
From \eqref{sys:2}, to leading order $\eps^{-1}$, we find
\begin{align*}
	\partial_{zz} \Phi_0 - \psi'(\Phi_0)=0.
\end{align*}
Using \eqref{ass:phz}, we observe that
this entails  $\Phi_0$ is just a function of $z$, resulting  \anold{in} the ODE relation
\begin{align}
	\label{ODE:phiz}
\partial_{zz}	\Phi_0(z)-\psi'(\Phi_0(z))=0,
	\quad
	\Phi_0(0)=0,
	\quad
	\Phi_0(\pm\infty) = \pm1.
\end{align}
%Recalling that $\psi$ is given by \eqref{quartic}, solving the above ODE yields
%\begin{align*}
%	\Phi_0(z)=
%	\tanh \big(\tfrac z{\sqrt{2}}\big),
%	\quad z \in \erre.
%\end{align*}
Upon testing \eqref{ODE:phiz} \an{with} $\partial_{z}\Phi_0$, we obtain the {so-called {\it equipartition of energy}}:
\begin{align}
	\label{eq:energy}
	\frac 12 |\partial_{z}\Phi_0(z)|^2 = \psi
	(\Phi_0(z)),
	\quad z \in \erre.
\end{align}
Hence, we find the identity
\begin{align}
	\label{def:gamma}
	\intinf |\partial_{z}\Phi_0(z)|^2\,dz =
	\intinf 2 \psi(\Phi_0(z))\,dz
	= \int_{-1}^1 \sqrt{2 \psi(s)} \, ds =: \gamma.
\end{align}
From $\psi'(-z)=-\psi'(z)$, we see that $-\Phi_0(z)=\Phi_0(-z)$.
Let us point out that, in the special case of $\psi$ being the quartic potential \eqref{quartic},
i.e., $\psi(r) = \tfrac 14 (1-r^2)^2,$ we obtain
$$\gamma= {\frac 1{\sqrt{2}}} \int_{-1}^1 (1-s^2 ) \, ds = \frac {2\sqrt{2}}3 .$$
%\footnote{We can now erase the following lines}
% the solution $\Phi_0(z)$ to the ODE problem \eqref{ODE:phiz} is $\Phi_0(z) = \tanh (\frac{z}{\sqrt{2}})$ and thus the interfacial parameter $\gamma$, see, e.g., \cite{GLSS}, is calculated as
%\begin{align*}
%%	\label{def:gamma}
%	\gamma=
%%	\begin{cases}
%	\frac 12 \intinf
%		 {\rm sech}^4 \big (\frac{z}{\sqrt{2}} \big)
%	\,dz=\frac {2\sqrt{2}}3 .
%%	\quad & \text{for the regular potential}
%%	\\
%%	\int_{-\frac \pi2}^{\frac \pi2}
%%		\cos^2(z)
%%	\,dz =\frac \pi2 \quad & \text{for the  double obstacle potential}.
%%	\end{cases}
%\end{align*}

Next, considering \eqref{sys:1} to order $\eps^{-2}$ produces
\begin{align*}
	\partial_{z}(m(\Phi_0)\partial_z \Lambda_0 ) = 0. 
\end{align*}
Upon integration and using the matching condition \eqref{match:2} to $\Lambda_0$, we infer that
\begin{align}\label{cond:lambdaz}
	m(\Phi_0)\partial_{z} \Lambda_0(t,s,z) =0,
	\quad  {z \in \erre}.
\end{align}
Since $m(\Phi_0)>0$, \eqref{cond:lambdaz} implies $\partial_{z} \Lambda_0(t,s,z) = 0$. We integrate once more and use the matching condition \eqref{match:1} to obtain the jump condition 
\[
\jump{\mu_0}=0.
\]

\subsection{Higher  order expansions in the interfacial region}
%\comm{[Change the title: it is not just first order. Remove the section title entirely?]}
Moving to \an{$\eps^0$} order in \eqref{sys:2},  we derive that
\begin{align}
	\Lambda_0 = \beta \psi''(\Phi_0)\Phi_1
	- \beta \partial_{zz}\Phi_1
	+ \beta \kappa\partial_{z}\Phi_0. \label{eq:Phi1}
\end{align}
%\comm{[Here we should make a choice on notation. It would be best to use  $\partial_{zz}\Phi_1 = \Phi_1''$ to be consistent with $\Phi_0' = \partial_{z}\Phi_0$...]}
Testing the above by $\pd_z \Phi_0$ and integrating from $-\infty$ to ${+\infty}$ produces
\begin{align*}
	\intinf
	\Lambda_0(t,s) \partial_{z}\Phi_0(z)\,dz
	= \beta \intinf \big( \partial_{z} (\psi'(\Phi_0(z))) \Phi_1
	-  \partial_{zz}\Phi_1 \partial_{z}\Phi_0(z)
	+  \kappa|\partial_{z}\Phi_0(z)|^2 \big) \,dz.
\end{align*}
Using \eqref{match:1} and \eqref{match:2} for $\Phi_0$, integrating by parts and using ${\psi'(\pm1)=0}$, we infer
\begin{align*}
	& \intinf
	 \partial_{z} (\psi'(\Phi_0(z)))\Phi_1  -
	\partial_{zz}\Phi_1 \partial_{z}\Phi_0(z)
	 \,dz
	 \\ & \quad
	  =
	 \big[\psi'(\Phi_0)\Phi_1 -
	\partial_{z}\Phi_1 \partial_{z}\Phi_0\big]^{+\infty}_{-\infty}
	 -\intinf
		\partial_z \Phi_1 (\psi'(\Phi_0(z))-\partial_{zz}\Phi_0(z))	
	 \, dz=0,
\end{align*}
{as both terms on the \rhs\ vanish,}
whence, recalling \eqref{eq:energy} {and} \eqref{def:gamma}, this entails that 
\[
2 \mu_0 = \gamma\beta \kappa.
\]

In order to utilize the equations to order $\eps^{-1}$, we now 
{exploit the preliminary assumption}
%take for simplicity 
$r_c=1${. From \eqref{sys:1}, we} obtain 
%Next, to order $\eps^{-1}$ we find from \eqref{sys:1} that
\begin{align*}
	-{\cal V} \partial_{z}\Phi_0 = \partial_{z}(m(\Phi_0)\partial_z \Lambda_1) -[ \Km G_2(\Phi_0)-\Kp G_2(-\Phi_0)]+L G_4(\Phi_0) .
\end{align*}
Integrating from $-\infty$ to ${+\infty}$ and using \eqref{match:3} for $\Lambda_1$ leads to
\begin{align*}
		-2 {\cal V} & =
		\jump{m(\ph_0)\nabla \mu_0} \cdot \bnu +    \int_{- \infty}^{{+\infty}} \Kp G_2(-\Phi_0(z)) - \Km G_2(\Phi_0(z)) + L  G_4(\Phi_0(z))\, dz.
%		\\ &
%		= \jump{m(\ph_0)\nabla \mu_0} \cdot \bnu +   S_K,
\end{align*}
As $\Phi_0$ is a fixed function, the integral on the \rhs\ yields a constant $S_I$ depending on $K_\pm$, $L$ and the double well potential $\psi$.
By 
%\eqref{def:G2}, \eqref{def:G3} and 
$-\Phi_0(z)=\Phi_0(-z)$, it holds that
\begin{align}
%	\non
\label{def:sec}
	S_I & :=   \int_{- \infty}^{{+\infty}} \Kp G_2(-\Phi_0(z)) - \Km G_2(\Phi_0(z)) \,dz +
	L \int_{- \infty}^{{+\infty}}  G_4(\Phi_0(z))\, dz\\ \non
	&=
	{(\Kp-  \Km)} \int_{- \infty}^{{+\infty}} G_2(\Phi_0(z)) \, dz  +
	L \int_{- \infty}^{{+\infty}} G_4(\Phi_0(z))\, dz,
\end{align}
where, upon setting $\tilde c := \tfrac {1}{2 \sqrt{\psi''(-1)}}$, we realize with the help of the equipartition of energy \eqref{eq:energy} that
\begin{align*}
	& \int_{- \infty}^{{+\infty}} G_2(\Phi_0(z)) \, dz	
	=
	 -\tilde c  \int_{-\infty}^{{+\infty}}
	(\Phi_0(z)-1)\sqrt{2 \psi(\Phi_0(z))}\, dz
	\\ 
	& \quad
  =-\tilde c \int_{-\infty}^{{+\infty}}  (\Phi_0(z)-1)\partial_{z}\Phi_0(z)\, dz
	= -\tilde c \int_{-1}^1 (s-1)\,ds =  2 \tilde c.
\end{align*}
In addition, we notice that
\[
G_4(\Phi_0(z))=2 \psi(\Phi_0(z))=\sqrt{2 \psi(\Phi_0(z))}
\sqrt{2 \psi(\Phi_0(z))} =\sqrt{2 \psi(\Phi_0(z))}\partial_{z} \Phi_0(z)
\]
and compute
\[
\int_{- \infty}^{{+\infty}}  G_4(\Phi_0(z))\, dz= \int_{- \infty}^{{+\infty}} \sqrt{2 \psi(\Phi_0(z))} \partial_{z}\Phi_0(z) {\, dz}=
 \int_{-1}^1 \sqrt{2 \psi(s)} \, ds=\gamma.
 \]
%\begin{align}
%	& \int_{- \infty}^{{+\infty}} G_2(\Phi_0(z)) \, dz	\non
%	=
%	 -\frac 14  \int_{-\infty}^{{+\infty}}
%	(1-\Phi_0^2(z))(\Phi_0(z)-1)\, dz
%	\\ \label{SK}
%	& \quad
%%	\int_{-\infty}^{{+\infty}}
%%	\Big(
%%	- 2 {\hat K}_- \Big(\frac {\Phi_0(z)+1}{2}\Big)
%	=-\frac 1{4 \hat c} \int_{-\infty}^{{+\infty}}  \Phi_0'(z)(\Phi_0(z)-1)\, dz
%	= -\frac 1{4 \hat c} \int_{-1}^{1} (s-1)\, ds =  \frac 1{2 \hat c},
%\end{align}
%where $\hat c = \frac {\sqrt 2}2 $ is such that $ \Phi_0'(z) ={\hat c} (1- \Phi_0^2(z))$ as can be checked from \eqref{eq:energy}, so that
Thus, recalling the definition of $\tilde c$, we find that
\begin{align}\label{defn:SI}
S_I=  \frac {{\Kp-  \Km}}{\sqrt{\psi''(-1)}}+\gamma L.
\end{align}
This concludes our analysis concerning the sharp interface system \anold{\eqref{SharpI}}. \an{Let us point out that}, for the \an{quartic potential \eqref{quartic}}, it holds that $\tilde c = \frac {\sqrt 2}4$, and so 
\begin{align}\label{defn:SI:eg}
S_I= \frac {\sqrt 2}2 ({\Kp-  \Km})+\frac{2\sqrt{2}}3  L.
\end{align}
We remark that in the case that $r_c\in (0,1)$ the term $S_I$ can be computed 
if we insert $\Phi_0$ in the definition of the source term, see \eqref{def:source},  and integrate from $-\infty$ to $\infty$.
This would then replace the integrals in \eqref{def:sec}.

{Combining the calculations above, we have demonstrated that as $\varepsilon \to 0$, the phase-field system \anold{\eqref{eq:sys21}} formally converges to the  limit described by the free-boundary problem \anold{\eqref{SharpI}}.}

\section{Planar solutions and their stability}
\subsection{Setting}
A simple solution can be computed in a planar geometry. In  particular, we will later use planar solutions to validate 
the asymptotic analysis from the previous section. More precisely, we will compare numerical computations for the phase field model with planar solutions of the sharp interface limit.
%\comm{[Do we really do that? In this case, it might be better to explain it more clearly.]}
We now consider the free boundary problem \anold{\eqref{SharpI}} in the {special} domain
\[
\Omega =(0,\Lpanarh)\times (0,\Lpanarv)^{d-1}{,}
\]
for $\Lpanarh, \Lpanarv>0$ and \anold{look} for a planar solution under the geometry
\begin{align*}
\Omega_+(t)=(0,q(t)) \times (0, \Lpanarv)^{d-1}, \quad
\Omega_-(t)=(q(t),\Lpanarh) \times (0, \Lpanarv)^{d-1},
\end{align*}
where $q(t)$ encodes the location of the \an{moving} interface $\Sigma_0$. In addition, we require a $90^\circ$ degree boundary condition at points where the interface meets the
external boundary. For $x\in \erre^d ${,} we write $x=(z,\hat x)$ with $z\in\erre$ and $\hat x\in \erre^{d-1}$. 
As $\bm{\nu}=(-1,{\0})^\top$\an{,} we get
\[
{\cal V} = -\frac {dq(t)}{dt} 
{=-\dot q}\anold{,}
\]
\anold{with the dot denoting the time derivative.}
For this planar setting we make the ansatz 
\an{\[
\an{
	\mu_\pm (t, x )=
}
\mu_\pm (t, (z,\hat x) )  =\hat\mu_\pm (t,z)
,
\quad 
x=(z,\hat x) \in (0, \Lpanarh)\times(0, \Lpanarv)^{d-1}.
\]}%
%for $z\in (0, \Lpanarh)$ and $\hat x \in (0, \Lpanarv)^{d-1}$. 
On the interface we obtain
\[
\nabla \mu_\pm  \an{(t,x)}\cdot \bm{\nu} =- \hat \mu_\pm^\prime (t,z),
\]
where prime denotes the partial derivative with respect to $z$.
In what follows, we drop the hat-notation \an{for convenience}. Due to the planar setting, we have $\kappa=0$ and hence \eqref{sharpI:2} and \eqref{sharpI:3} can be replaced by
\begin{equation}\label{sharp:3prime}
\mu_+(t,q(t)) =\mu_-(t,q(t)) =0,
\end{equation}
while \eqref{sharpI:4} can be \an{reformulated} \mod{as}
\begin{equation}
	2 {\dot q} = - \jump{m \mu'} +S_I.
	\label{sharp:4prime}
	\end{equation}
Equation \eqref{sharpI:5} can be replaced by
\begin{equation}
	\mu_+^\prime (t,0)=0, \quad 	\mu_-^\prime (t,\Lpanarh)=0.
	\label{sharp:5prime}
\end{equation}
In addition, \eqref{sharpI:1:p} and \eqref{sharpI:1:m} become
\begin{align}\label{ODEplan1}
	-m_+ \mu''_+ &=S_+  -\rho_+ \mu_+ \qquad \text{for } z\in (0,q(t)) ,\\ \label{ODEplan2}
	-m_- \mu''_- &=S_-  -\rho_-\mu_- \qquad \text{for } z\in (q(t),\Lpanarh ) .
	\end{align}
%	We now intruduce the nondimensional setting and
%	hence set
%	$m_-=S_- = \rho_-=1$ in the governing equations
%	and replace $m_+, S_+, \rho_+$ by
%	$m_*, S_*, \rho_*$ and in addition define
%	$$d^*= \frac{S_*}{\rho^*}, \qquad (\Lambda^* )^2= \frac{\rho^*}{m^*}.
%	$$
%	The ordinary differential equations
%	\eqref{ODEplan1},  \eqref{ODEplan2} are now given by
%	\begin{align*}
%		-\frac1{(\Lambda^* )^2} (\mu_+)'' &=S^*  -\rho^* \mu_+ \qquad \text{for} z\in (0,q(t)) ,\\
%		- \mu''_- &=1  -\mu \qquad \text{for} z\in (q(t),\Lpanarh ) .
%	\end{align*}
%	\footnote{no nondim at this point}
%	
\subsection{Solution formula and evolution equation for the  interface position}
From now on, we restrict ourselves to the case $\rho_\pm >0$ which is \an{relevant for applications, as shown in \cite{Active_drops}.}
We obtain \an{the following} solutions
\begin{align}
	\mu_+(t,z)&= d_+ \Big (1-\frac {\cosh{(\Lp  z)}}{\cosh{(\Lp q{(t)})}} \Big ),\label{muplan1}\\
	\mu_-(t,z)&= d_- \Big (1-\frac {\cosh{(\Lm (\Lpanarh-z))}}{\cosh{(\Lm (\Lpanarh-q{(t)}))}} \Big ){,}\label{muplan2}
\end{align}
where the constants $d_{\pm}$ and $\anold{\lambda_\pm}$ are defined as
\[
d_{\pm} := \frac{S_{\pm}}{\rho_{\pm}} = \beta \frac{S_{\pm} \psi''(\pm 1)}{K_{\pm}}, \quad \anold{\lambda_\pm} := \sqrt{\frac{\rho_{\pm}}{m_{\pm}}}.
\]
It can be \an{readily} verified that \eqref{muplan1} and \eqref{muplan2} solve the ordinary differential equations \eqref{ODEplan1} and \eqref{ODEplan2}, and fulfill the boundary conditions \eqref{sharp:3prime} and \eqref{sharp:5prime}.

Meanwhile, the evolution equation \eqref{sharp:4prime} for the interface position ${q=}q(t)$ becomes
\begin{equation}\label{eq:dqdt}
\begin{aligned}
	 {\dot q} &= \frac{1}{2} \Big (-m_+ \mu'_+ (q) + m_- \mu'_- (q) +S_I \Big ) \\
 &= \frac{1}{2} \Big ( d_+m_+ \Lp  \tanh{(  \Lp  q)}+ d_-m_- \Lm  \tanh{(  \Lm  (\Lpanarh-q))}  +S_I \Big ) \\
 & =: {\cal H}(q).
 \end{aligned}
\end{equation}
We notice that
\begin{align*}
{\cal H} (0) = \frac 12[d_-m_- \Lm  \tanh (\Lm  \Lpanarh)  +S_I], \quad {\cal H} (\Lpanarh) = \frac 12[d_+m_+ \Lp \tanh(\Lp 
\Lpanarh )  +S_I].
\end{align*}
In the case where $S_I = 0$, by having $d_+ < 0$ and \anold{since the} other parameters $m_{\pm}$, $\anold{\lambda_\pm}$ and $d_-$ \anold{are} positive, it holds that ${\cal H}(0) > 0$, ${\cal H}(\Lpanarh) < 0$ and ${\cal H}'(q) < 0$. Due to \anold{the} continuity of ${\cal H}(q)$\an{,} we are guaranteed the existence of exactly one root $q^*$ where ${\cal H}(q^*) = 0$. For example, setting $S_+ = -1$ and all other parameters equal to 1, we find that $q^* = \frac{\Lpanarh}{2}$ is a root of ${\cal H}$. Note that when $S_I \neq 0$ it is possible that roots of ${\cal H}$ may not exist at all. However, after fixing parameters $d_{\pm}$ and $\anold{\lambda_\pm}$ (hence fixing also $K_{\pm})$, one can adjust the parameter $L$ in $S_I$ to help ensure the existence of a root for ${\cal H}(q)$.

\subsection{Linear stability of planar solutions}\label{subsec:linstabpl}

\an{We now aim to analyze the stability of the planar solutions, with their position denoted by $q^*$ and the corresponding chemical potentials denoted by $\mu_\pm^*$. Specifically, we consider a perturbed interface of the form $w := q^* + \badeps \rad$, where $0 < \badeps < 1$ and $\rad = \rad(t, \hat x)$. The idea is to start from the stationary front characterized by $q^*$, as identified above, and proceed with a stability analysis around this equilibrium.
%We now want to analyse the stability of the planar solutions whose position we denote by $q^*$\anold{. The} corresponding {chemical} potentials are called $\mu_\pm^* $. Namely we consider a perturbed interface of the form $w := q^* + \badeps \rad$, with $0 < \badeps < 1$ and $\rad = \rad(t, \hat x)$. 
Thus}, we define perturbed domains as
\begin{align*}
	\Omega_{\rad,\badeps}^+ & := \{
	x \in \Omega\,\, : \,\,
	{z}  <w(t, \hat x)
	\},
	\an{\quad \text{and} \quad }
	\Omega_{\rad,\badeps}^- := \{
	x \in \Omega \,\, : \,\,
	{z}  > w(t, \hat x)
	\}.
\end{align*}
We make the ansatz
\[
\mu_\pm (t,x) = \mu_\pm^* ({z}) + \badeps u_\pm (t,x),
\]
and demand that those solve the free boundary problem on the perturbed domains, \an{given by}
\begin{subequations}\label{lin:planar}
\begin{alignat}{2}
&- m_+ \Delta (\mu_{+}^* + \badeps u_+) = S_+ - \rho_+ (\mu_+^* + \badeps u_+) \quad && \text{ in } \Omega_{\rad,\badeps}^+, \\
&- m_- \Delta (\mu_{-}^* + \badeps u_-) = S_- - \rho_- (\mu_-^* + \badeps u_-) \quad & & \text{ in } \Omega_{\rad,\badeps}^-, \\
&\mu_{+}^* + \badeps u_+ = \mu_{-}^* + \badeps u_- \quad && \text{ on } \{{z} = w\}, \\
& 2 (\mu_{\pm}^* + \badeps u_{\pm}) = \gamma \beta \kappa \quad && \text{ on } \{{z} = w\}, \\
& - 2 {\cal V} = [m \nabla (\mu^* + \badeps u)]_{-}^+ \cdot \bm{\nu} + S_I \quad && \text{ on } \{{z} = w\}, \\
& (\mu_{+}^* + \badeps u_+)'(t, 0) = 0, \quad (\mu_-^* + \badeps u_-)'(t, {\cal L}) = 0. &&
\end{alignat}
\end{subequations}
Linearizing the above equations about  the original interface $\{{z} = q^*\}$, while using \eqref{sharp:3prime}--\eqref{ODEplan2}, as well as
\[
\mu_{\pm}^*(w) = \mu_{\pm}^*(q^*) + (\mu_{\pm}^*)'\vert_{{z} = q^*} (w - q^*) + \text{ h.o.t.} = 0 + \badeps (\mu_{\pm}^*)' \vert_{{z} = q^*} \rad + \text{ h.o.t.}, 
\]
\[
(\mu_{\pm}^*)'(w) = (\mu_{\pm}^*)'(q^*) + (\mu_{\pm}^*)''\vert_{{z} = q^*} (w - q^*) + \text{ h.o.t.} = (\mu_{\pm}^*)'(q^*) + \badeps (\mu_{\pm}^*)'' \vert_{{z} = q^*} \rad + \text{ h.o.t.}, 
\]
we obtain the following system for $\rad$ and $u_{\pm}$:
\begin{subequations}\label{perturb:planar}
\begin{alignat}{2}
\label{pertplan:1} &- m_+ \Delta u_+ = - \rho_+  u_+ \quad && \text{ in } \{{z} < q^*\}, \\
\label{pertplan:2} & - m_- \Delta u_- = - \rho_- u_- \quad && \text{ in } \{{z} > q^*\}, \\
\label{pertplan:3} & (\mu_{+}^*)' \vert_{{z} = q^*} \rad + u_+  = (\mu_{-}^*)' \vert_{{z} = q^*} \rad + u_- \quad && \text{ on } \{{z} = q^*\}, \\
\label{pertplan:4} &2( (\mu_{\pm}^*)' \vert_{{z} = q^*} \rad + u_{\pm}) =- \gamma \beta \Delta_{\hat x} \rad \quad && \text{ on } \{ {z} = q^*\}, \\
\label{pertplan:5}  &2 {\dot Y} = -m_+ ((\mu_+^*)'' \vert_{{z} = q^*} \rad + u_+') + m_- ((\mu_-^*)'' \vert_{{z} = q^*} \rad +u_-')  \quad && \text{ on } \{ {z} = q^* \}, \\
\label{pertplan:6} & (u_+)'(t, 0) = 0 , \quad (u_-)'(t, {\cal L}) = 0, && 
\end{alignat}
\end{subequations}
where the linearization of the mean curvature operator gives
\[
\kappa(t, \hat{x}) \approx -\Delta_{\hat{x}} (\badeps \rad(t, {\hat x}))\anold{,}
\]
with $\Delta_{\hat x}$ denoting the Laplace operator with respect to the $(d-1)$-dimensional coordinates $\hat{x}$. Suppressing the explicit dependence on $t$, for each $t$ we make the ansatz
\[
u_{\pm}(x) = v_{\pm}(z) W(\hat x)
\]
and choose $W$ as an eigenfunction of the $\Delta_{\hat x}$-operator with Neumann boundary conditions such that for $\hat{\bm {\ell}} = (\ell_2, \dots, \ell_d) \in \enne_0^{d-1}$,
\begin{equation*}
\begin{cases}
\displaystyle \Delta_{\hat x} W  =  \frac{\zeta_{\hat{\bm\ell}, d}}{(\widetilde{\cal L})^2} W & \text{ in } (0, \widetilde{\cal L})^{d-1},\\
\dn W  = 0 & \text{ on } \partial (0, \widetilde{\cal L})^{d-1},
\end{cases}
\end{equation*}
where $\frac{\zeta_{\hat{\bm\ell}, d}}{(\widetilde{\cal L})^2}$ serves as a corresponding eigenvalue. A possible eigenfunction is
\[
W(x_2, \dots, x_d) = \cos \Big ( \frac{\pi}{\widetilde{\cal L}} \ell_2 x_2 \Big ) \times \cdots \times \cos \Big ( \frac{\pi}{\widetilde{\cal L}} \ell_d x_d \Big )
\]
with
\[
\zeta_{\hat{\bm \ell}, d} = - \pi^2 (\ell_2^2 + \cdots + \ell_d^2).
\]
With this ansatz for $u_{\pm}$ we find that \eqref{pertplan:1}--\eqref{pertplan:2} reduces to 
\[
- m_{\pm} (v_{\pm}'' W + v_{\pm} \Delta_{\hat x} W) = - \rho_{\pm} v_{\pm} W
\]
and this yields
\[
v_{\pm}'' = \Big (\anold{\lambda_\pm^2} -  \frac{\zeta_{\hat{\bm\ell}, d}}{(\widetilde{\cal L})^2} \Big ) v_{\pm} = (\Gamma_{\pm}^{\hat{\bm\ell}})^2 v_{\pm}{,}
\]
where we recall $\anold{\lambda_\pm^2} = \frac{\rho_{\pm}}{m_{\pm}}$ and set $\Gamma_{\pm}^{\hat{\bm\ell}}= \sqrt{\anold{\lambda_\pm^2} -  \frac{\zeta_{\hat{\bm\ell}, d}}{(\widetilde{\cal L})^2}}$. We \an{then} consider
\begin{align*}
v_{+}(z) = a_+ \cosh (\Gamma_{+}^{\hat{\bm\ell}} z), \quad v_{-}(z) = a_- \cosh (\Gamma_{-}^{\hat{\bm\ell}}({\cal L} -  z)),
\end{align*}
for some \an{unknown functions} $a_+(t)$ and $a_-(t)$ to be determined. Similarly, we make the ansatz
\[
\rad(t,\hat{x}) = y(t) W(\hat{x})
\]
and get
\[
\displaystyle \Delta_{\hat x} \rad = y(t) \Delta_{\hat x} W(\hat{x}) = y(t) \frac{\zeta_{\hat{\bm\ell}, d}}{(\widetilde{\cal L})^2} W(\hat{x}) = \frac{\zeta_{\hat{\bm\ell}, d}}{(\widetilde{\cal L})^2} \rad,
\]
which also leads to perturbed interfaces that fulfil a $90^\circ$ angle condition at points where the interface intersects the outer boundary. We now need to ensure that the boundary conditions \eqref{pertplan:3}--\eqref{pertplan:6} are fulfilled. On recalling \eqref{muplan1} and \eqref{muplan2} we see that 
\begin{equation*}
\begin{alignedat}{3}
(\mu_+^*)'(q^*) & = - d_+ \Lp  \tanh(\Lp  q^*), \quad && (\mu_+^*)''(q^*)&& = - d_+ \Lp ^2, \\
(\mu_-^*)'(q^*) & = d_- \Lm  \tanh(\Lm ( {\cal L} - q^*)), \quad && (\mu_-^*)''(q^*) &&= -d_- \Lm ^2,
\end{alignedat}
\end{equation*}
and so \eqref{pertplan:3} and \eqref{pertplan:4} become
\begin{align*}
& - d_+ \Lp  \tanh(\Lp  q^*) + a_+ \cosh (\Gamma_{+}^{\hat{\bm\ell}} q^*) \\
& \quad = d_- \Lm  \tanh(\Lm  ({\cal L} - q^*)) + a_- \cosh (\Gamma_{-}^{\hat{\bm\ell}} ({\cal L} - q^*)) = - \frac{\gamma \beta}{2} \frac{\zeta_{\hat{\bm\ell}, d}}{(\widetilde{\cal L})^2}.
\end{align*}
Hence, we \an{derive the following:}
\begin{align*}
a_+ (q^*)& = \frac{1}{\cosh (\Gamma_{+}^{\hat{\bm\ell}} q^*)} \Big ( d_+ \Lp  \tanh(\Lp  q^*) - \frac{\gamma \beta}{2} \frac{\zeta_{\hat{\bm\ell}, d}}{(\widetilde{\cal L})^2} \Big ), \\
a_-(q^*) & = -\frac{1}{\cosh (\Gamma_{-}^{\hat{\bm\ell}} ({\cal L} - q^*))} \Big ( d_- \Lm  \tanh(\Lm  ({\cal L} - q^*)) + \frac{\gamma \beta}{2} \frac{\zeta_{\hat{\bm\ell}, d}}{(\widetilde{\cal L})^2} \Big ).
\end{align*}
Meanwhile, for \an{the evolution equation} \eqref{pertplan:5}\an{,} we have 
\begin{align*}
2 {\dot Y} & = m_+ d_+ \Lp ^2 \rad - m_- d_- \Lm ^2 \rad \\
& \quad - m_+ a_+(q^*) \Gamma_+^{\hat{\bm\ell}} \sinh(\Gamma_+^{\hat{\bm\ell}} q^*) \rad - m_- a_-(q^*) \Gamma_{-}^{\hat{\bm\ell}} \sinh(\Gamma_-^{\hat{\bm\ell}} ({\cal L} - q^*)) \rad \\
& = (S_+ - S_- -m_+ a_+(q^*) \Gamma_+^{\hat{\bm\ell}} \sinh(\Gamma_+^{\hat{\bm\ell}} q^*) -  m_- a_-(q^*) \Gamma_{-}^{\hat{\bm\ell}} \sinh(\Gamma_-^{\hat{\bm\ell}} ({\cal L} - q^*)) ) \rad,
\end{align*}
where we recall the definition $d_{\pm} = \frac{S_{\pm}}{\rho_{\pm}} = \frac{S_{\pm}}{\anold{\lambda_\pm^2} m_{\pm}}$. The amplification factor is the \an{prefactor coefficient on the \rhs\ of the above identity}
\begin{align}\label{planar:amp}
S_+ - S_- -m_+ a_+(q^*) \Gamma_+^{\hat{\bm\ell}} \sinh(\Gamma_+^{\hat{\bm\ell}} q^*) -  m_- a_-(q^*) \Gamma_{-}^{\hat{\bm\ell}} \sinh(\Gamma_-^{\hat{\bm\ell}} ({\cal L} - q^*)).
\end{align}
For the rest of this section, we consider specific parameters \an{ensuring} the amplification factor \eqref{planar:amp} is positive so as to induce the development of instabilit\anold{ies} from interface perturbations. We fix 
%$d=2$ so that $\Omega = (0,{\cal L}) \times (0, \widetilde{\cal L})$, 
$\hat{\bm \ell} = (\ell_2,...,\ell_d)$, and 
\[
S_+ = -1, \quad S_{-} = m_{\pm} = \rho_{\pm} = 1{,}
\]
so that the stationary state is $q^* = \frac{{\cal L}}{2}$, along with $d_{+} = -1$, $d_{-} = 1$, $\anold{\lambda_\pm} = 1$,
\begin{align*}
& \Gamma_{\pm}^{\hat{\bm\ell}} = \sqrt{1 + \frac{|\hat{\bm\ell}|^2 \pi^2}{\widetilde{\cal L}^2}}, \quad a_+(q^*) = \frac{-\tanh( \frac{{\cal L}}{2}) + \frac{\gamma \beta}{2}  |\hat{\bm\ell}|^2 \pi^2/\widetilde{{\cal L}}^2 } {\cosh{\Big(} \frac{{\cal L}}{2} \sqrt{1 + |\hat{\bm\ell}|^2 \pi^2/\widetilde{\cal L}^2}{\Big)}}, \quad \\ & 
a_-(q^*) = \frac{-\tanh( \frac{{\cal L}}{2}) +\frac{\gamma \beta}{2}  |\bm\ell|^2 \pi^2/\widetilde{{\cal L}}^2}{\cosh{\Big(} \frac{{\cal L}}{2} \sqrt{1 + |\hat{\bm\ell}|^2 \pi^2/\widetilde{\cal L}^2}{\Big)}}\an{,}
\end{align*}
and the amplification factor \eqref{planar:amp} reduces in this setting to 
\begin{align}\label{planar:amp:2}
-2 + \sqrt{1 + \tfrac{|\hat{\bm\ell}|^2 \pi^2}{\widetilde{{\cal L}}^2}} \tanh \left( \tfrac{{\cal L}}{2}\sqrt{1 + \tfrac{|\hat{\bm\ell}|^2 \pi^2}{\widetilde{{\cal L}}^2}} \right) \left(2 \tanh \left( \tfrac{{\cal L}}{2} \right) - 
%\frac
{\gamma \beta}
%{2} 
\tfrac{|\hat{\bm\ell}|^2 \pi^2}{\widetilde{\cal L}^2} \right).
\end{align}
In the case $\hat{\bm \ell} =\an{(0,...,0)}$, this corresponds to interface perturbations of the form $w = q^* + \badeps$, i.e., translational perturbations\anold{. We} immediately see that the amplification factor becomes
\[
-2 + 2\tanh^2 \Big ( \frac{{\cal L}}{2} \Big ) = -2 \mathrm{sech}^2 \Big ( \frac{{\cal L}}{2} \Big ) < 0.
\]
Thus, we obtain stability with respect to translational perturbations. For $|\hat{\bm\ell}| > 0$, we obtain   interface perturbations of the form $w = q^* + \badeps \cos (\pi \ell_2 x_2 / \widetilde{{\cal L}})\times \last{\cdots }\times \cos (\pi \ell_d x_d / \widetilde{{\cal L}})$. We see that \eqref{planar:amp:2}, for a given perturbation whose wave length is related to  $\hat{\bm\ell}$, can be positive if $\beta< \beta_{\mathrm{crit}}(\hat{\bm\ell})$, where
\begin{align}\label{planar:beta:crit}
\beta_{\mathrm{crit}} (\hat{\bm\ell})= \frac{2}{\gamma} \frac{\widetilde{{\cal L}}^2}{|\hat{\bm\ell}|^2 \pi^2} \left ( \tanh \Big ( \frac{{\cal L}}{2} \Big ) - \frac{1}{\sqrt{1 + |\hat{\bm\ell}|^2 \pi^2/\widetilde{\cal L}^2} \tanh \big ( \frac{{\cal L}}{2} \sqrt{1 + |\hat{\bm\ell}|^2 \pi^2 / \widetilde{\cal L}^2} \big )} \right ).
\end{align}
We remark that in this setting $\beta$ is the modified capillary length \last{$c_l$} introduced \last{in \eqref{cap:lenght}} in Subsection \ref{sec:nondim}, i.e.,
we obtain instability in cases where the modified capillary length is small enough.
As $|\hat{\bm\ell}|^2 = (\ell_2)^2+\cdots+(\ell_d)^2$ with ${\hat{\bm\ell}=(} \ell_2,\dots,\ell_d) \in \mathbb{N}_0^{d-1}$\an{,} we obtain that 
the possible perturbations lead to amplification factors via sum of squares, namely:
\[
|\hat{\bm \ell}|^2 \in \begin{cases}
\{0, 1, 4, 9, 16, \dots \} & \text{ if } d = 2, \\
\{0, 1, 2, 4, 5, 8, 9, 10, \dots \} & \text{ if } d = 3.
\end{cases}
\]

\section{Numerical Computations}

%\an{Let us now proceed to the validation of the analyses carried out using numerical simulations.}
\mod{In this section, we state numerical computations that show several phenomena discussed in the introduction. In particular,  we will observe a suppression of Ostwald ripening, splitting scenarios and instabilities of flat fronts and growing particles. The numerical simulations also support the sharp interface asymptotics,  as we get a good agreement between phase field computations and exact solutions of the sharp interface problem. }
All our numerical simulations are for the quartic potential \eqref{quartic} 
and the interpolation functions \eqref{def:G1}, \eqref{def:G23} and 
\eqref{eq:G4}, for a fixed value $r_c \in (0,1]$.

\subsection{Finite element method}

We assume that $\Omega$ is a polyhedral domain and let $\mathcal{T}_h$ be a regular triangulation of $\Omega$ into disjoint open simplices. Associated with $\mathcal{T}_h$ is the piecewise linear finite element space
\[
S^h = \{ \zeta \in C^0(\overline{\Omega}) \, : \, \zeta \vert_{o} \in P_1(o) \quad \forall o \in \mathcal{T}_h\},
\]
where we denote by $P_1(o)$ the set of all affine linear functions on $o$,
\an{see} \cite{Ciarlet78}. 
Let $(\cdot,\cdot)^h$ be the usual mass lumped $L^2$-inner product on $\Omega$ associated with $\mathcal{T}_h$, and let $\pi^h : C^0(\overline{\Omega}) \to S^h$
be the standard interpolation operator. 
In addition, let $\tau$ denote a chosen uniform time step size. Then our finite element approximation of \eqref{eq:sys21} 
is given as follows. Let $\phi_h^0 \in S^h$, e.g.\an{,} $\phi_h^0 = \pi^h \varphi_0$
if $\varphi_0 \in C^0(\overline\Omega)$. 
Then, for $n \geq 0$, find $(\phi_h^{n+1}, \mu_h^{n+1}) \in S^h \times S^h$ such that
\begin{subequations} \label{eq:FEA}
\begin{alignat}{2}
\frac{1}{\tau}(\phi_h^{n+1}-\phi_h^n, \chi_h)^h + (m(\phi_h^n) \nabla \mu_h^{n+1}, \nabla \chi_h)^h & = (S_\eps(\phi_h^n), \chi_h)^h
\quad &&\forall\chi_h\in S^h, \\
\beta \varepsilon (\nabla \phi_h^{n+1}, \nabla \eta_h) + \frac{\beta}{\eps} (\psi'(\phi_h^{n+1}), \eta_h)^h - (\mu_h^{n+1}, \eta_h)^h & = 0
%- \frac{\beta}{\eps} (\psi_2'(\phi_h^n), \eta_h)^h 
\quad &&\forall\eta_h\in S^h.
\end{alignat}
\end{subequations}
We implemented the scheme \eqref{eq:FEA} with the help of the finite element 
toolbox ALBERTA, see \cite{Alberta}. 
To increase computational efficiency, we employ adaptive meshes, which have a 
finer mesh size $h_{f}$ within the diffuse interfacial regions and a coarser 
mesh size $h_{c}$ away from them. In particular, we use the 
strategy from \cite{voids3d,voids}, and refine an element $o$ if
$\eta_o = |\max_o |\phi_h^n| - 1 | > 0.5$, unless it is already of size $h_f$,
and similarly coarsen an element $o$ if $\eta_o < 0.1$, unless it is already of
size $h_c$. For simplicity we assume from now on that 
$\Omega = \prod_{i=1}^d (0,L_i) \subset \mathbb R^d$, with 
$L_1 \geq L_2 \geq \anold{ \dots \geq } L_d$, and then let $h_f = \frac{L_d}{N_f}$,
$h_c = \frac{L_d}{N_c}$ for two chosen integer parameters $N_f > N_c$. 
Here, unless otherwise specified, for the computations with a phase field
parameter $\eps = (2^k\pi)^{-1}$, $k \in \mathbb N$, we choose
$N_f = 8N_c = 2^{3+k} L_d$. This ensures that the interfacial regions are
accurately resolved, while using a relatively coarser mesh in the pure
regions. For the time discretization we choose $\tau=10^{-3}$, unless stated
otherwise.

The nonlinear system of equations arising at each time level of 
\eqref{eq:FEA} are solved with the help of Newton's method. 
The resulting linear systems at each iteration in two spatial dimensions
are solved by direct factorization using the package UMFPACK, 
see \cite{Davis04}, 
and in three spatial dimensions with a $V$-cycle multigrid solver using a block Gauss--Seidel smoother.

For the initial data $\varphi_0$ we in general choose a diffuse interface
representation of a desired sharp interface, with signed distance function
$d_0 : \overline\Omega \to \mathbb R$. In particular, unless otherwise stated, 
we let
%\footnote{Robert: In the past we also used \eqref{eq:tanh0} with
%$\varphi_+ \not= 1$, $\varphi_- \not= -1$. E.g.\
%$\varphi_- = -1 + \eps \frac{S_-}{K_-}$, where it was best to ensure
%$\eps \frac{S_-}{K_-} < 1 - 3^{-\frac12} \approx 0.42$ to prevent
%spontaneous nucleation in the bulk.}
\begin{equation} \label{eq:tanh0}
 \varphi_0(x) = %\frac12 (\varphi_+ + \varphi_-) 
%+ \frac12 (\varphi_+ - \varphi_-) 
\tanh \Big ( \frac{d_0(x)}{\eps \sqrt{2}} \Big ) .
\end{equation}
%In the past, we also sometimes used the initial data
%\footnote{Robert: Make sure this is never used in the included numerical
%results and then remove this equation.}
%\begin{equation} 
% \varphi_0(x) = \tanh\frac{d_0(x)}{\eps \sqrt{2}} 
%+ \frac12 \frac\eps\beta \mu_0(r_0, R; x),
% \label{eq:tanh0w0}
%\end{equation}
%where $\mu_0$ is the true solution for the potential for 
%a circular interface with radius $r_0$ in the domain $\mathbb B_R(0)$.

For the definition of $S_2$ in \eqref{def:source}, recall \eqref{def:source2},
we need to specify $K_\pm$ and $L$.  
For convenience, for the numerical simulations to follow, we will define the
relations $\rho_\pm= \frac {K_\pm}{2\beta}$, from which $K_\pm$ can then be 
inferred.
Unless otherwise stated, we set $m\an{_\pm}=\rho\an{_\pm}=r_c=1$ and $L=0$ throughout.

\subsection{Numerical computations: Planar solutions and their stability}

We begin with a convergence experiment for the exact solution of a moving 
flat vertical front in $\Omega = (0,1)^2$, whose $x$-position $q(t)$ satisfies
the ODE \eqref{eq:dqdt}. To this end, we solve the ODE numerically and compare
the obtained approximation of $q(t)$ with the position of the diffuse front
for solutions of our finite element scheme \eqref{eq:FEA} for decreasing values
of $\eps$. In particular, for a given $\phi^n_h$ we will compare $q(t_n)$, 
where $t_n = n\tau$, with the unique value $q_h(t_n) \in (0,1)$ such that 
$\phi^n(q_h(t_n), 0) = 0$. 
In particular, for the physical parameters we set 
$\beta=0.1$, $S_-=4$, $S_+=-1$, $\rho_-=0.1$, $\rho_+=1$ and
$L=-1$, and we compute the evolutions over the time interval $[0,1]$. For the
initial position of the flat interface we choose $q(0) = 0.3$.
Then we calculate the evolutions for \eqref{eq:FEA} for $\eps=(2^{k}\pi)^{-1}$,
$k=2,\ldots,6$. A comparison between $q$ and the various $q_h$ can be seen in
Figure~\ref{fig:conv2d}, where we note an excellent agreement between the true
solution $q$ and the numerical approximations $q_h$ when $\eps$ is 
sufficiently small.
%In these experiments we let $N_f = 8N_c = 2^{3+k}$ and $\tau=10^{-3}$.
\begin{figure}
% ../plotflatrho
% ../plotflatconvergence && cp flatconvergence.png ~/tex/glns/ool/figures && scpp flatconvergence.png e23:tex/glns/ool/figures
\center
\includegraphics[angle=-90,width=0.5\textwidth]{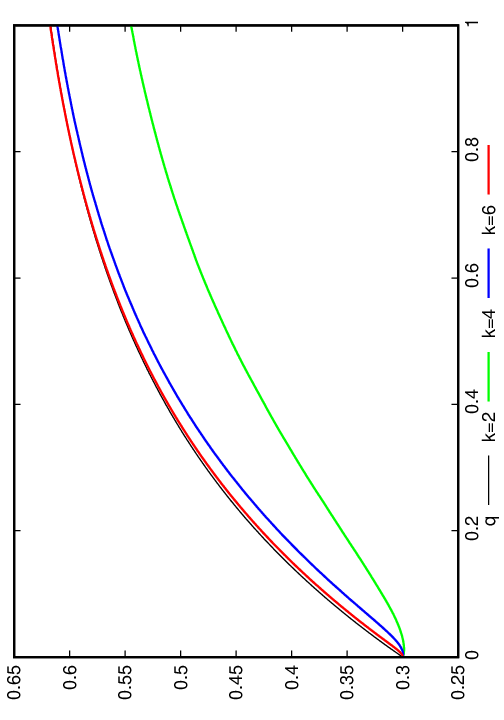}
\caption{Convergence experiment for a moving front in $(0,1)^2$. We compare the
true solution $q$ \mod{of the sharp interface problem} with the discrete approximations $q_h$ \mod{of the Cahn--Hilliard equation} for
$\eps=(2^k\pi)^{-1}$, $k=2,4,6$.}
\label{fig:conv2d}
\end{figure}%
In order to more closely investigate the convergence behaviour of the phase
field solution to the sharp interface solution as $\eps\to0$, we also compute
the error between $q(T)$ and $q_h(T)$ at time $T=1$ and display these values in
Table~\ref{tab:conv2d}. The experimential order of convergence suggests a
quadratic convergence in $\eps$.
This is backed by the fact that the first order correction
$\Phi_1$, solving \eqref{eq:Phi1}, is zero in the case that the curvature $\kappa$ is zero which holds for a flat interface, see also
\cite{GarckeS06}.
\begin{table}
% grep error ../2d.aug*pi.0101.flat03.beta01_4-1_011_L-1/specs.used
\center
\begin{tabular}{r|c|c}
$\eps^{-1}$ & $|(q - q_h)(T)|$ & EOC \\
\hline
$4\pi$  & 7.2539e-02 & --- \\
$8\pi$  & 1.9990e-02 & 1.86\\
$16\pi$ & 6.0682e-03 & 1.72\\
$32\pi$ & 1.4667e-03 & 2.05\\
$64\pi$ & 3.3276e-04 & 2.14\\
% $(128\pi)^{-1}$& 2.0773e-04 & 0.68 \\
\end{tabular}
\caption{Convergence experiment for a moving front in $(0,1)^2$, over the 
time interval $[0,1]$.
We also display the experimental order of convergence (EOC).
}
\label{tab:conv2d}
% ~/hpc_cluster/data/alberta/ool/2d.aug4pi.0101.flat03.beta01_4-1_011_L-1
% ~/hpc_cluster/data/alberta/ool/2d.aug8pi.0101.flat03.beta01_4-1_011_L-1
% ~/hpc_cluster/data/alberta/ool/2d.aug16pi.0101.flat03.beta01_4-1_011_L-1
% ~/hpc_cluster/data/alberta/ool/2d.aug32pi.0101.flat03.beta01_4-1_011_L-1
% ~/hpc_cluster/data/alberta/ool/2d.aug64pi.0101.flat03.beta01_4-1_011_L-1
% ~/hpc_cluster/data/alberta/ool/2d.aug128pi.0101.flat03.beta01_4-1_011_L-1
% ~/hpc_cluster/data/alberta/ool/2d.aug128pi.0101.flat03.beta01_4-1_011_L-1_fine
\end{table}%

For completeness\an{,} we repeat the same convergence experiment also in three dimensions on the
unit cube $\Omega=(0,1)^3$. As expected, the observed results are  nearly
identical to the earlier two-dimensional computations, see
Figure~\ref{fig:conv3d} and Table~\ref{tab:conv3d}.
\begin{figure}
% ../plotflatrho
% ../plotflatconvergence3d && cp flatconvergence3d.png ~/tex/glns/ool/figures && scpp flatconvergence3d.png e23:tex/glns/ool/figures
\center
\includegraphics[angle=-90,width=0.5\textwidth]{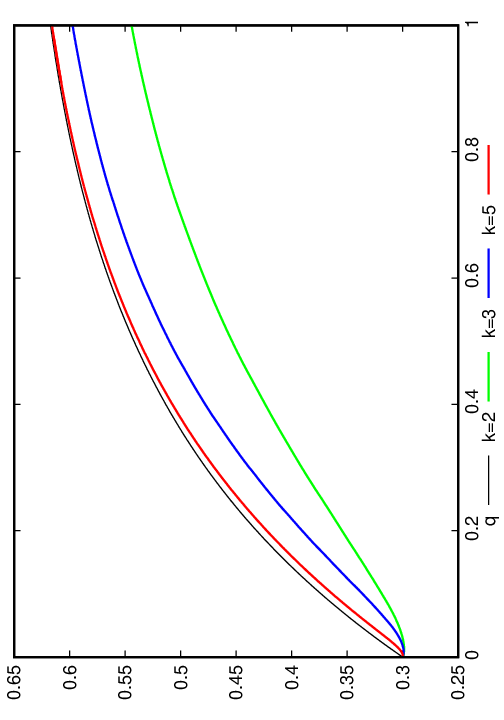}
\caption{Convergence experiment for a moving front in $(0,1)^3$. We compare the
true solution $q$ \mod{of the sharp interface problem} with the discrete approximations $q_h$ \mod{of the Cahn--Hilliard equation } for
$\eps=(2^k\pi)^{-1}$, $k=2,3,5$.}
\label{fig:conv3d}
\end{figure}%
\begin{table}
% grep error ../3d.jan*pi.01.flat03.beta01_4-1_011_L-1/specs.used
\center
\begin{tabular}{r|c|c}
$\eps^{-1}$ & $|(q - q_h)(T)|$ & EOC \\
\hline
$4\pi$  & 7.2942e-02 & --- \\
$8\pi$  & 1.9824e-02 & 1.88\\
$16\pi$ & 5.8611e-03 & 1.76\\
$32\pi$ & 1.2907e-03 & 2.18\\
\end{tabular}
\caption{Convergence experiment for a moving front in $(0,1)^3$, over the 
time interval $[0,1]$.
We also display the experimental order of convergence (EOC).
}
\label{tab:conv3d}
% ~/hpc_cluster/data/alberta/ool/3d.jan4pi.01.flat03.beta01_4-1_011_L-1
% ~/hpc_cluster/data/alberta/ool/3d.jan8pi.01.flat03.beta01_4-1_011_L-1
% ~/hpc_cluster/data/alberta/ool/3d.jan16pi.01.flat03.beta01_4-1_011_L-1
% ~/hpc_cluster/data/alberta/ool/3d.jan32pi.01.flat03.beta01_4-1_011_L-1
\end{table}%

For stability investigations, we let $\Omega=(0,1)^2$ and set
$\beta = 0.1$, $S_\pm = \mp8$. This encourages growth of the $2$-mode, as
can be seen in Figure~\ref{fig:flat01awiggles32pi} for a run with
$\eps=\frac1{32\pi}$. Here as initial data we use a flat front at the middle of
the domain, with an added perturbation of magnitude less than $0.1$
given by a sum of modes from 1 to 20 with
random coefficients. \mod{Computing the amplification factors in \eqref{planar:amp} shows that ${\hat{\bm\ell}}=(\ell_2)=(2)$ is the most unstable mode. This agrees with the numerical solution plotted in Figure~\ref{fig:flat01awiggles32pi}, where the $2$-mode is the one which is most amplified.}

\begin{comment}
See Figure~\ref{fig:flat01wiggles32pi}.
Notice that for $\eps=\frac1{16\pi}$ it holds that
$\eps \frac{S_-}{K_-} > 0.42 \approx 1 - 3^{-\frac12}$. In fact, when repeating
the experiments on the longer domains $\Omega=(0,5)\times(0,1)$ and 
$\Omega=(0,10)\times(0,1)$ we experience nucleation away from the initial
interface, for both $\eps=\frac1{16\pi}$ and $\eps=\frac1{32\pi}$.
%\footnote{\comm{Harald: Maybe we can choose different parameters, that do not violate
%$\eps \frac{S_-}{K_-} < 1 - 3^{-\frac12}$ for $\eps=\frac1{16\pi}$.}}
\begin{figure}
% ../plotool; mv energy.png jan32piflat01_01_8-8_e.png
% paraview --data=uw_h..vtk
%          choose white, blue, black colour palette, save animation
% cp jan32piflat01_01_8-8_0[01][012][01].png ~/tex/glns/ool/figures && scpp jan32piflat01_01_8-8_0[01][012][01].png e23:tex/glns/ool/figures
\center
\mbox{
\includegraphics[angle=-0,width=0.2\textwidth]{figures/jan32piflat01_01_8-8_0000}
\includegraphics[angle=-0,width=0.2\textwidth]{figures/jan32piflat01_01_8-8_0001}
\includegraphics[angle=-0,width=0.2\textwidth]{figures/jan32piflat01_01_8-8_0010}
\includegraphics[angle=-0,width=0.2\textwidth]{figures/jan32piflat01_01_8-8_0020}
\includegraphics[angle=-0,width=0.2\textwidth]{figures/jan32piflat01_01_8-8_0100}}
%\includegraphics[angle=-0,width=0.06\textwidth]{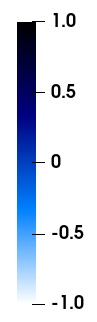}
\caption{($\eps=\frac1{32\pi}$, $\Omega=(0,1)^2$) 
Evolution for $\beta=0.1$, $S_-=8$, $S_+ = -8$. 
We show the solution at times $t=0,0.1,1,2,10$.} 
\label{fig:flat01wiggles32pi}
% ~/hpc_cluster/data/alberta/ool/2d.jan32pi.0101.flat05wiggles.beta01_8-8
\end{figure}%
% ~/hpc_cluster/data/alberta/ool/2d.jan16pi.0101.flat05wiggles.beta01_8-8
% ~/hpc_cluster/data/alberta/ool/2d.jan32pi.0101.flat05wiggles.beta01_8-8
% ~/hpc_cluster/data/alberta/ool/2d.jan16pi.0501.flat05wiggles.beta01_8-8 BAD
% ~/hpc_cluster/data/alberta/ool/2d.jan32pi.0501.flat05wiggles.beta01_8-8 BAD
% ~/hpc_cluster/data/alberta/ool/2d.jan16pi.01001.flat05wiggles.beta01_8-8 BAD
% ~/hpc_cluster/data/alberta/ool/2d.jan32pi.01001.flat05wiggles.beta01_8-8 BAD
\end{comment}
\begin{figure}
% paraview --data=uw_h..vtk
%          choose white, blue, black colour palette, save animation
% cp jan32piflat01a_01_8-8_0[01][012][01].png ~/tex/glns/ool/figures && scpp jan32piflat01a_01_8-8_0[01][012][01].png e23:tex/glns/ool/figures
\center
\mbox{
\includegraphics[angle=-0,width=0.18\textwidth]{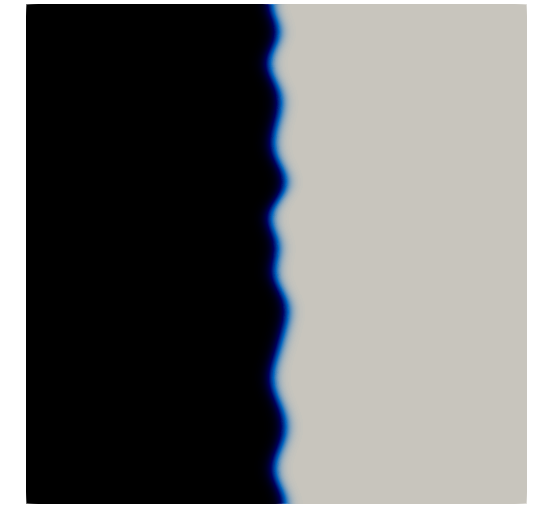}
\includegraphics[angle=-0,width=0.18\textwidth]{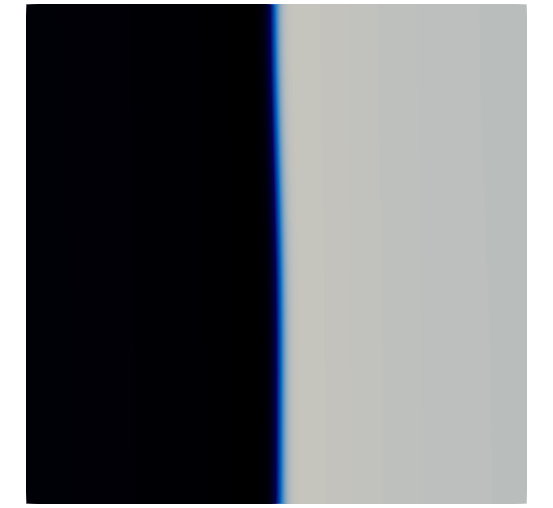}
\includegraphics[angle=-0,width=0.18\textwidth]{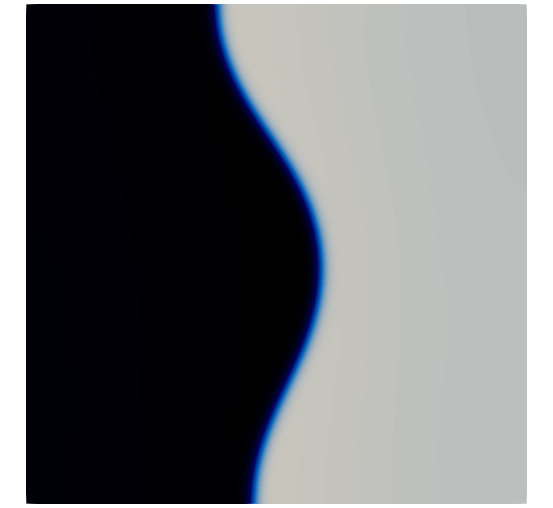}
\includegraphics[angle=-0,width=0.18\textwidth]{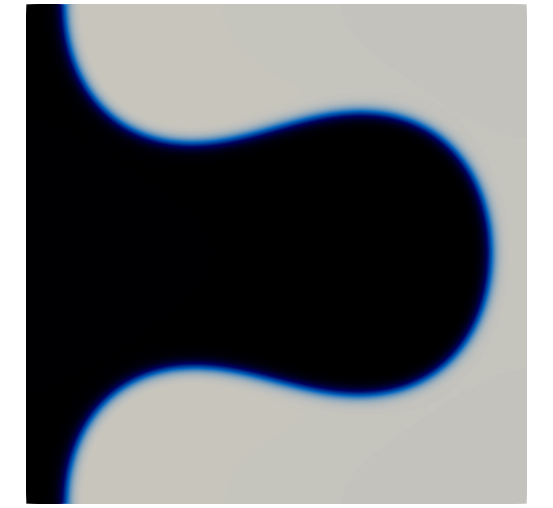}
\includegraphics[angle=-0,width=0.18\textwidth]{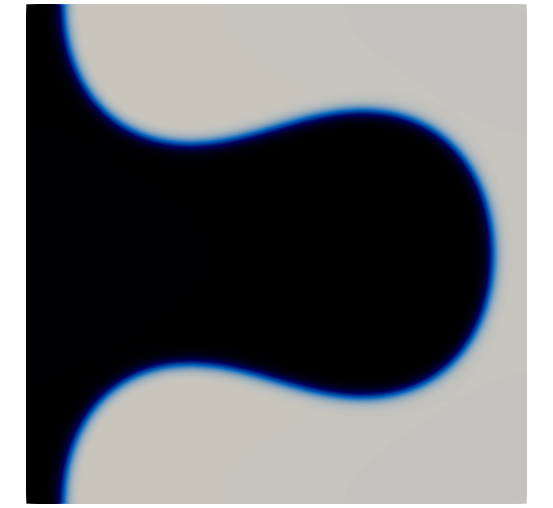}
\includegraphics[angle=-0,width=0.05\textwidth]{figures/legend}}
\caption{($\eps=\frac1{32\pi}$, $\Omega=(0,1)^2$) 
Evolution for $\beta=0.1$, $S_-=8$, $S_+ = -8$. 
We show the solution at times $t=0,0.1,1,2,10$.} 
\label{fig:flat01awiggles32pi}
% ~/hpc_cluster/data/alberta/ool/2d.jan32pi.0101.flat05wiggles202.beta01_8-8
\end{figure}%

As an alternative, we compute on $\Omega = (0,4) \times (0,2)$ with
$\beta=0.1$, $S_\pm = \mp1.5$.
For the same type of initial perturbation as before, 
this once again encourages growth of the $2$-mode, see
Figure~\ref{fig:flat0402wiggles16pi} for a run with
$\eps=\frac1{16\pi}$. Also in this case, the linear stability analysis   predicts that the  $2$-mode is most unstable.
\begin{figure}
	% paraview --data=uw_h..vtk
	%          choose white, blue, black colour palette, save animation
	% cp jan16piflatw0402i_01_15-15_0[01][0125][0].png ~/tex/glns/ool/figures && scpp jan16piflatw0402i_01_15-15_0[01][0125][0].png e23:tex/glns/ool/figures
	\center
	\mbox{
		\includegraphics[angle=-0,width=0.2\textwidth]{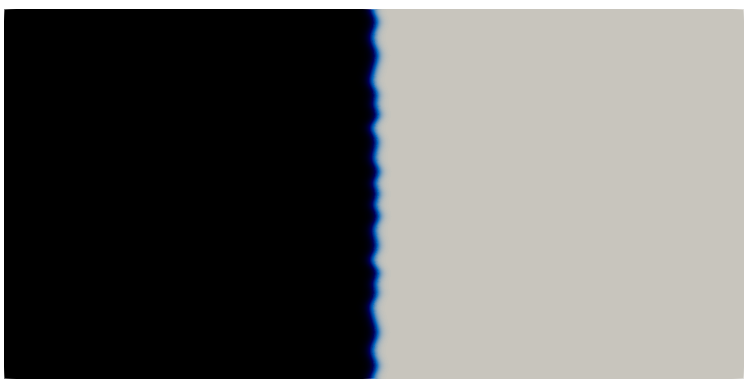}
		\includegraphics[angle=-0,width=0.2\textwidth]{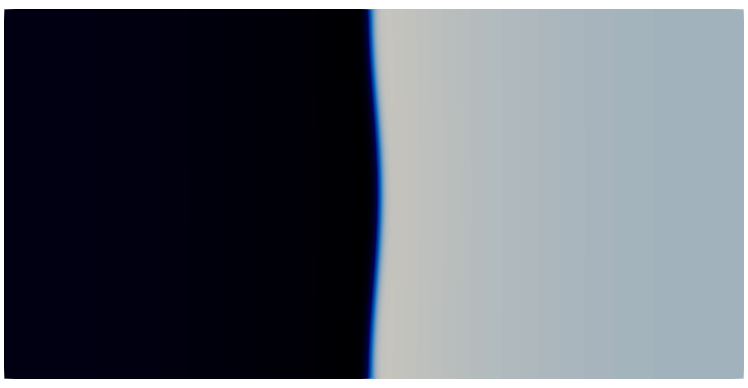}
		\includegraphics[angle=-0,width=0.2\textwidth]{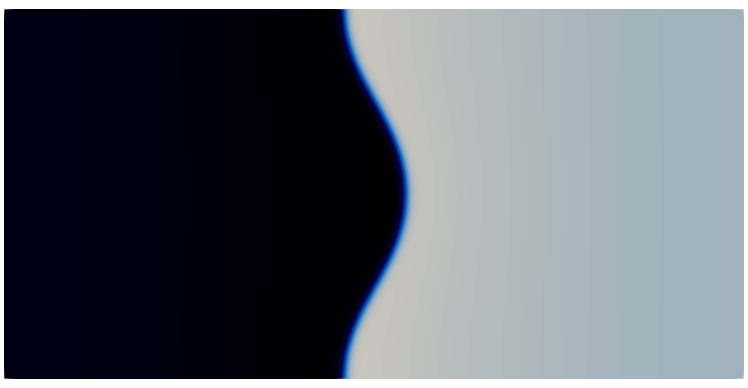}
		\includegraphics[angle=-0,width=0.2\textwidth]{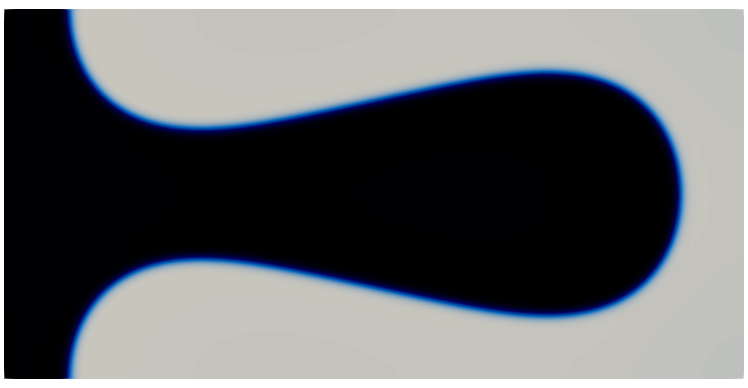}
		\includegraphics[angle=-0,width=0.2\textwidth]{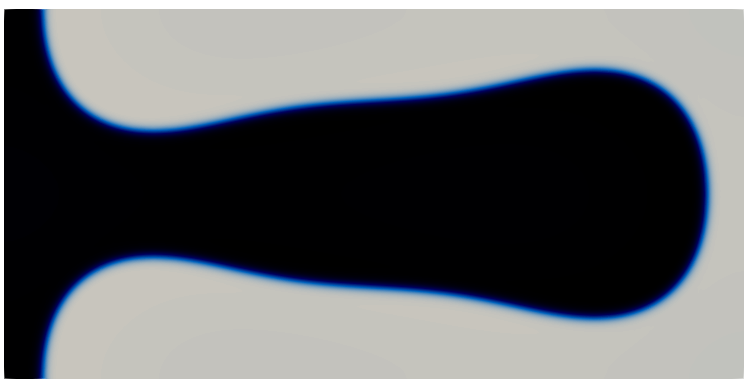}}
	\caption{($\eps=\frac1{16\pi}$, $\Omega= (0,4) \times (0,2)$)
		Evolution for $\beta=0.1$, $S_-=1.5$, $S_+ = -1.5$. 
		We show the solution at times $t=0,1,2,5,10$.} 
	\label{fig:flat0402wiggles16pi}
	% ~/hpc_cluster/data/alberta/ool/2d.jan16pi.0402.flat05wiggles02i.beta01_15-15
\end{figure}%

Moreover, on the domain $\Omega = (0,4) \times (0,1)$ we compute with
$\beta=0.1$, $S_\pm = \mp3$. 
See Figure~\ref{fig:flat0401wiggles32pi} for a simulation with 
$\eps=\frac1{32\pi}$, where we observe the growth of the 1-mode, \mod{which is also predicted by the linear stability analysis when computing  the numbers for the amplification factors  in \eqref{planar:amp} for different values of ${\hat{\bm\ell}}$.} At later times
the long and nearly horizontal interface becomes unstable for higher modes.
Here as initial data we use a flat front at the middle of
the domain, with an added perturbation of magnitude less than $0.025$
given by a sum of modes from 1 to 20 with
random coefficients.
\begin{figure}
% paraview --data=uw_h..vtk
%          choose white, blue, black colour palette, save animation
% cp jan32piflat0401_01_3-3_0[01][0125][0].png ~/tex/glns/ool/figures && scpp jan32piflat0401_01_3-3_0[01][0125][0].png e23:tex/glns/ool/figures
\center
\mbox{
\includegraphics[angle=-0,width=0.2\textwidth]{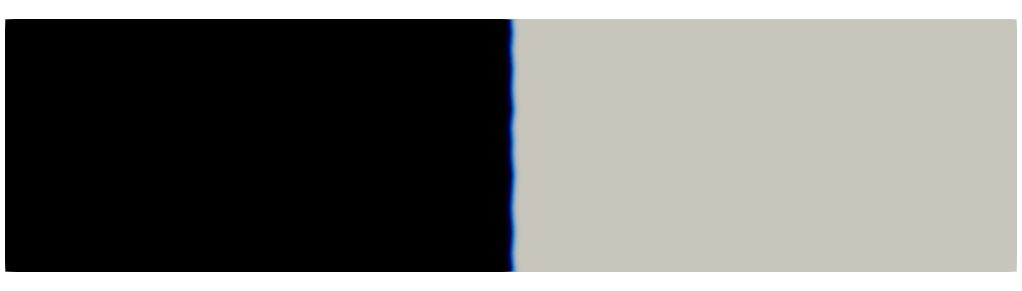}
\includegraphics[angle=-0,width=0.2\textwidth]{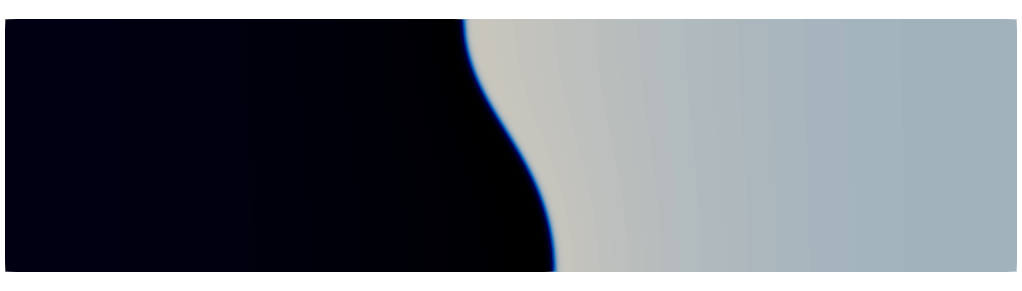}
\includegraphics[angle=-0,width=0.2\textwidth]{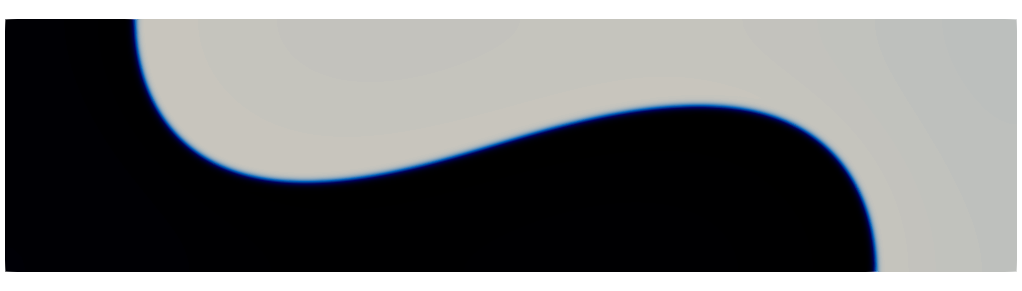}
\includegraphics[angle=-0,width=0.2\textwidth]{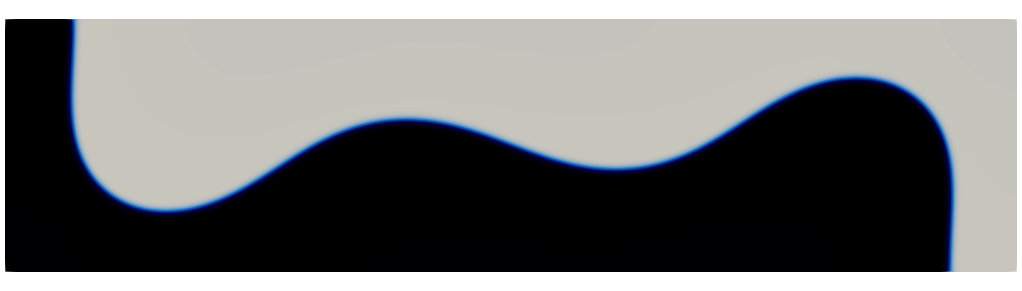}
\includegraphics[angle=-0,width=0.2\textwidth]{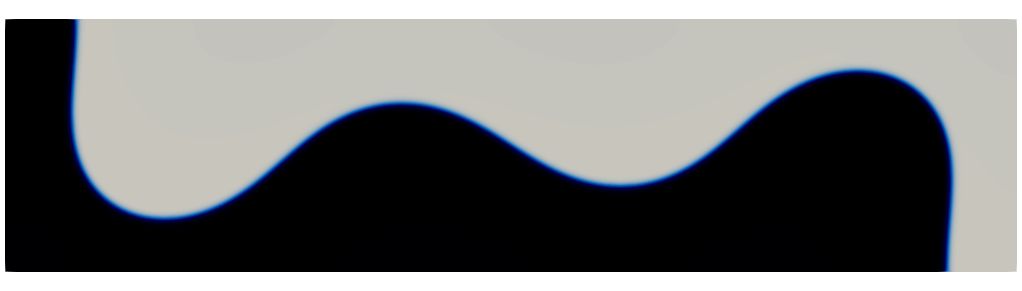}}
\caption{($\eps=\frac1{32\pi}$, $\Omega= (0,4) \times (0,1)$) 
Evolution for $\beta=0.1$, $S_-=3$, $S_+ = -3$. 
We show the solution at times $t=0,1,2,5,10$.} 
\label{fig:flat0401wiggles32pi}
% ~/hpc_cluster/data/alberta/ool/2d.jan32pi.0401.flat05wiggles2.beta01_3-3
\end{figure}%
\last{
Moreover, on the domain $\Omega = (0,2)^2$ we compute with
$\beta=0.01$, $S_\pm = \mp1.3$. 
See Figure~\ref{fig:flat64pi_2x2_13} for a simulation with 
$\eps=\frac1{64\pi}$, where we observe the growth of the 5-mode.
Here as initial data we use a flat front at position $0.5$, 
with an added perturbation of magnitude less than $0.025$
given by a sum of modes from 1 to 20 with random coefficients.
It is worth noticing that the 5-mode is also predicted to grow the most by the amplification factors obtained in the linear stability analysis. In this case, we computed the values in \eqref{planar:amp} for a $q(0)$  which leads to a planar solution which is not stationary.
\begin{figure}
% paraview --data=uw_h..vtk
%          choose white, blue, black colour palette, save animation
% cp aug64piflat_2x2_13left_00[012][01235].png ~/tex/glns/ool/figures && scpp aug64piflat_2x2_13left_00[012][01235].png e23:tex/glns/ool/figures
\center
\mbox{
\includegraphics[angle=-0,width=0.2\textwidth]{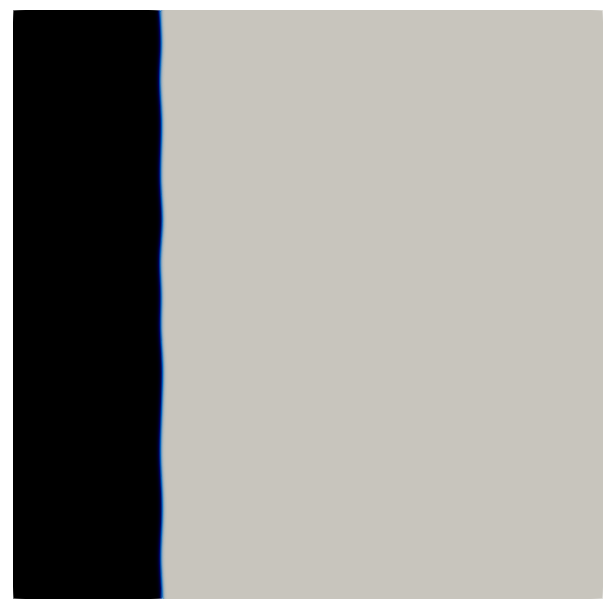}
\includegraphics[angle=-0,width=0.2\textwidth]{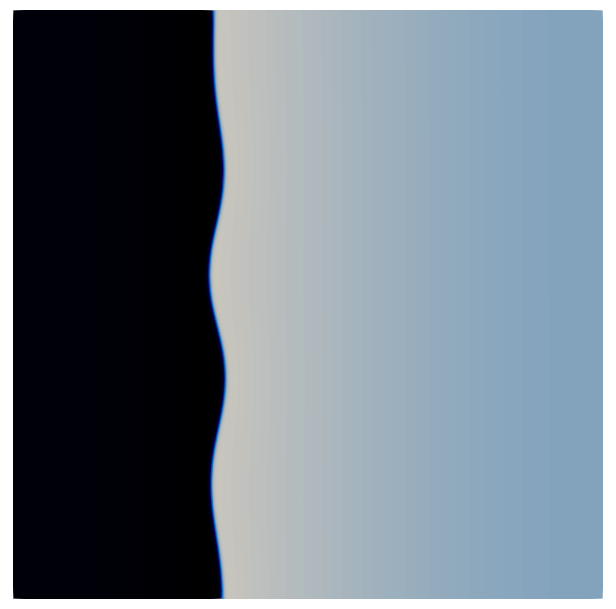}
\includegraphics[angle=-0,width=0.2\textwidth]{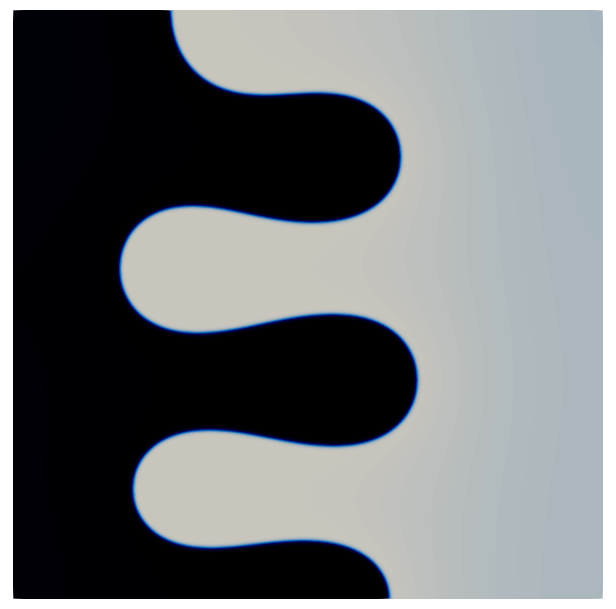}
\includegraphics[angle=-0,width=0.2\textwidth]{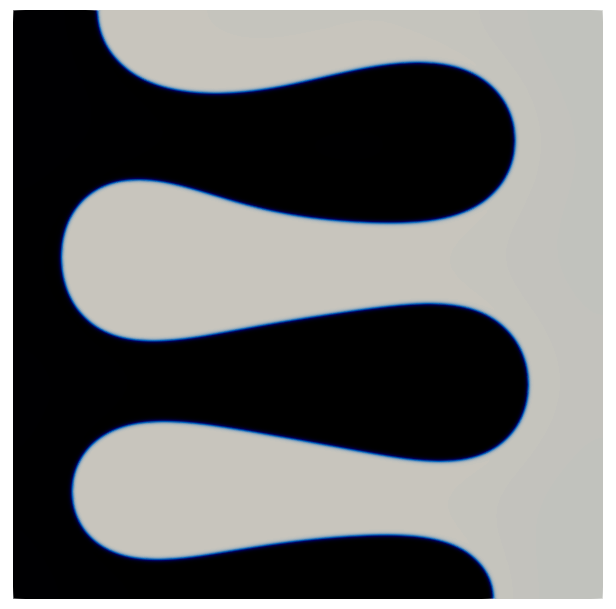}
\includegraphics[angle=-0,width=0.2\textwidth]{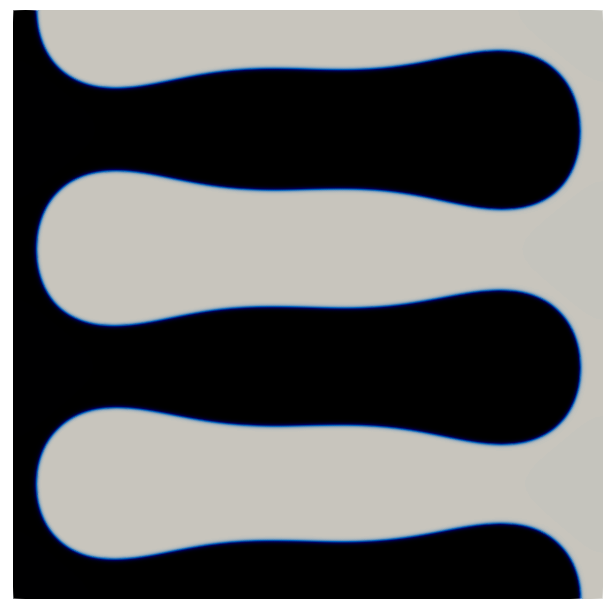}}
\caption{($\eps=\frac1{64\pi}$, $\Omega= (0,2)^2$) 
Evolution for $\beta=0.01$, $S_-=1.3$, $S_+ = 1.3$.
We show the solution at times $t=0, 1, 2, 3, 20$.}
\label{fig:flat64pi_2x2_13}
% ~/hpc_cluster/data/alberta/ool/2d.aug64pi.0202.flat1wiggles.beta001_13-13
\end{figure}%
%\begin{figure}
%% paraview --data=uw_h..vtk
%%          choose white, blue, black colour palette, save animation
%% cp aug64piflat_2x2_13_00[012][01235].png ~/tex/glns/ool/figures && scpp aug64piflat_2x2_13_00[012][01235].png e23:tex/glns/ool/figures
%\center
%\mbox{
%\includegraphics[angle=-0,width=0.2\textwidth]{figures/aug64piflat_2x2_13_0000}
%\includegraphics[angle=-0,width=0.2\textwidth]{figures/aug64piflat_2x2_13_0002}
%\includegraphics[angle=-0,width=0.2\textwidth]{figures/aug64piflat_2x2_13_0003}
%\includegraphics[angle=-0,width=0.2\textwidth]{figures/aug64piflat_2x2_13_0005}
%\includegraphics[angle=-0,width=0.2\textwidth]{figures/aug64piflat_2x2_13_0020}}
%\caption{($\eps=\frac1{64\pi}$, $\Omega= (0,2)^2$) 
%Evolution for $\beta=0.01$, $S_-=1.3$, $S_+ = 1.3$.
%We show the solution at times $t=0, 2, 3, 5, 20$.}
%\label{fig:flat64pi_2x2_13}
%% ~/hpc_cluster/data/alberta/ool/2d.aug64pi.0202.flat1wiggles.beta001_13-13
%\end{figure}%
%Moreover, on the domain $\Omega = (0,2)^2$ we compute with
%$\beta=0.01$, $S_\pm = \mp1.3$. 
%See Figure~\ref{fig:flat64pi_2x2_13} for a simulation with 
%$\eps=\frac1{64\pi}$, where we observe the growth of the 5-mode.
%Here as initial data we use a flat front at position $1$, 
%with an added perturbation of magnitude less than $0.025$
%given by a sum of modes from 1 to 20 with random coefficients.
}% 

For a three-dimensional analogue of Figure~\ref{fig:flat01awiggles32pi}, we
use the parameters $\beta = 0.1$, $S_\pm = \mp4.5$, $m_\pm=0.2$
on the unit cube $\Omega=(0,1)^3$. The initial perturbation of a flat interface
at position $q=0.5$ is made up of a single mode with maximal magnitude 
$0.2$. % = 0.05 * 2 * 2.
The evolution is shown in Figure~\ref{fig:3dflat05wiggleiv}.
%\footnote{Will probably be modified.}
\begin{figure}
% paraview --data=uw_h..vtk
%          choose white, blue, black colour palette, save animation
% cp jan16piflat_1x1x1_cos_00[012][015].png ~/tex/glns/ool/figures && scpp jan16piflat_1x1x1_cos_00[012][015].png e23:tex/glns/ool/figures
\center
\mbox{
\includegraphics[angle=-0,width=0.18\textwidth]{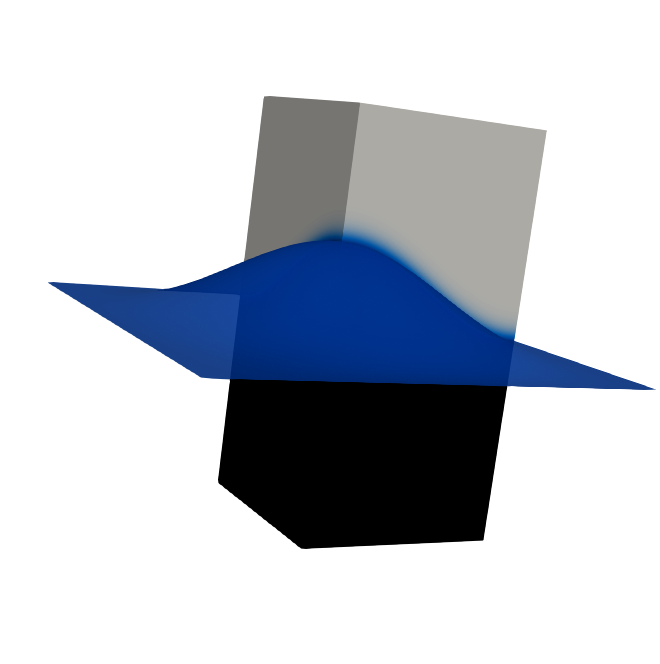}
\includegraphics[angle=-0,width=0.18\textwidth]{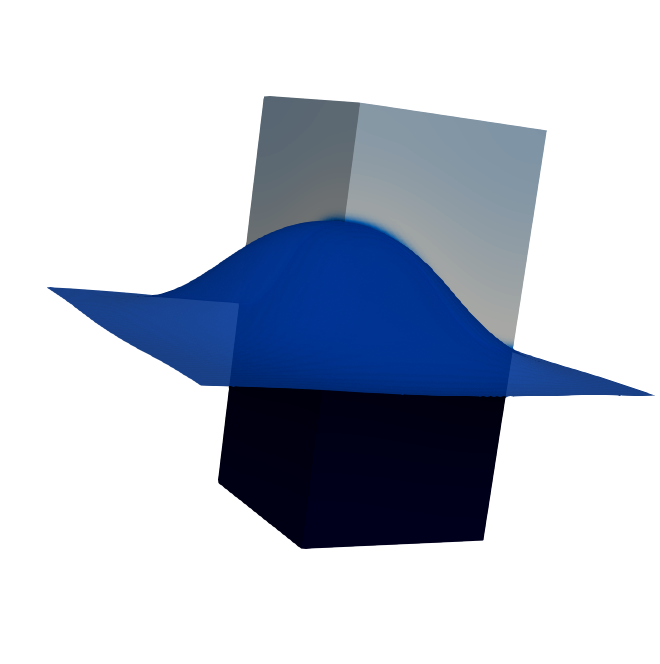}
\includegraphics[angle=-0,width=0.18\textwidth]{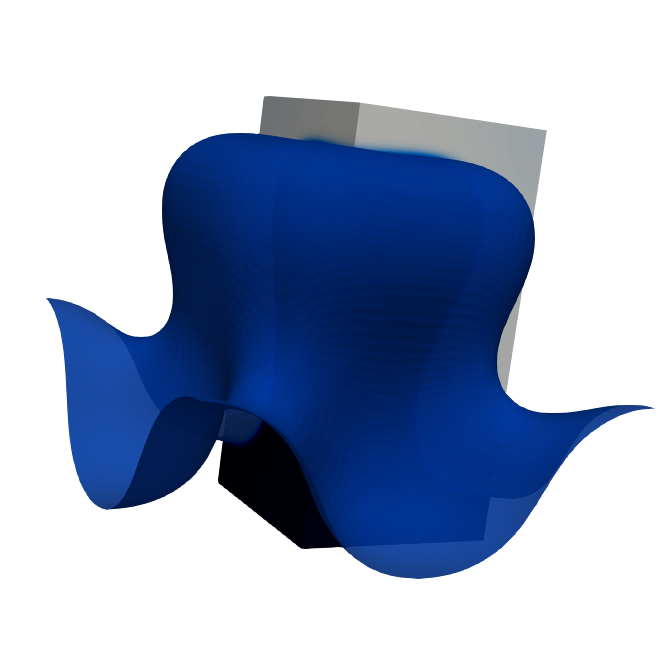}
\includegraphics[angle=-0,width=0.18\textwidth]{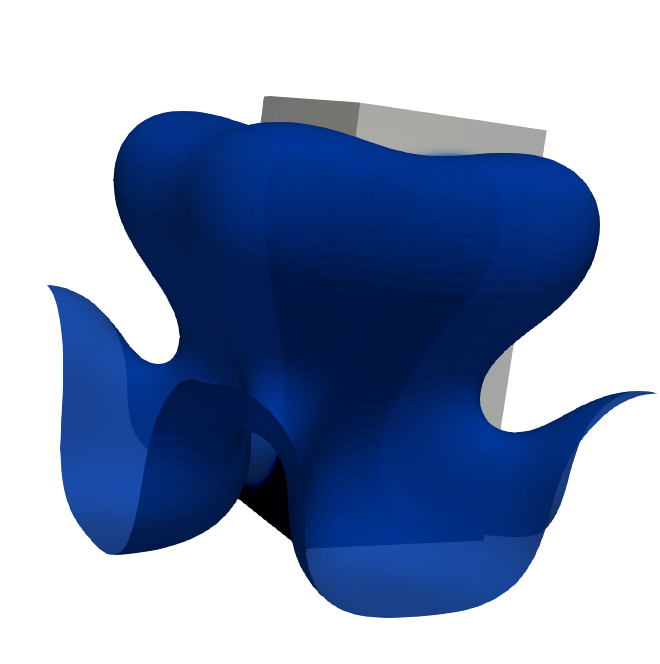}
\includegraphics[angle=-0,width=0.18\textwidth]{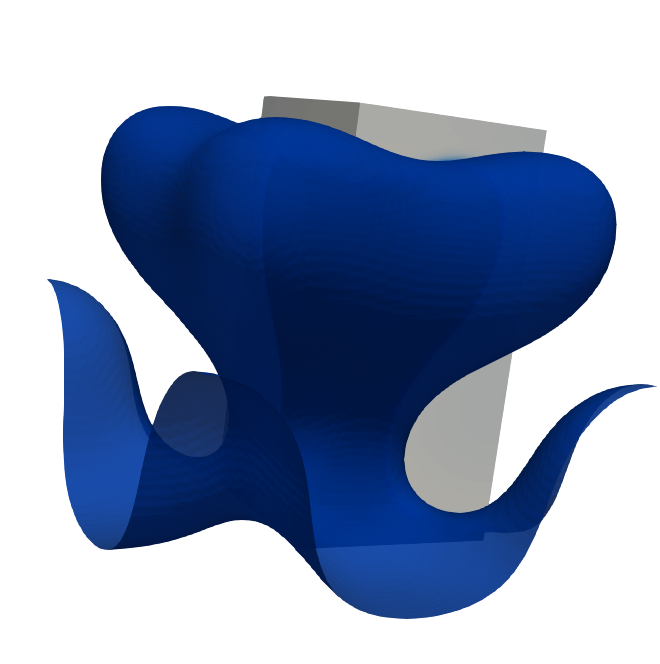}}
\caption{($\eps=\frac1{16\pi}$, $\Omega=(0,1)^3$) 
Evolution for $\beta=0.1$, $m_\pm = 0.2$, $S_-=4.5$, $S_+ = -4.5$.
We show the solution at times $t=0,0.1,0.5,1,1.5$.} 
\label{fig:3dflat05wiggleiv}
% ~/hpc_cluster/data/alberta/ool/3d.jan16pi.01.flat05wiggleiv.beta01_4.5-4.5_m0202 
\end{figure}%

\subsection{Numerical computations: Spinodal decomposition}

In this subsection we are interested in simulations that demonstrate spinodal
decomposition. To this end, we choose for the discrete initial data 
$\varphi_0^h$ a random function with zero mean and values inside 
$[-0.1, 0.1]$. On the domain $\Omega=(0,4)^2$ we then choose the physical
parameters $\beta=0.002$, $S_-=0.25$, $S_+ = -4$, and let 
$\eps = \frac1{16\pi}$. 
The simulation is shown in Figure~\ref{fig:square2_16pinewR2sd_beta0002}.
\begin{figure}
% ../plotool; mv energy.png aug16pinewsquare2sd0002_025-4_e.png
% paraview --data=uw_h..vtk
%          choose white, blue, black colour palette, save animation
% cp aug16pinewsquare2sd0002_025-4_[01][01][0125][015].png ~/tex/glns/ool/figures && scpp aug16pinewsquare2sd0002_025-4_[01][01][0125][015].png e23:tex/glns/ool/figures
\center
\newcommand\localwidth{0.16\textwidth}
\mbox{
\includegraphics[angle=-0,width=\localwidth]{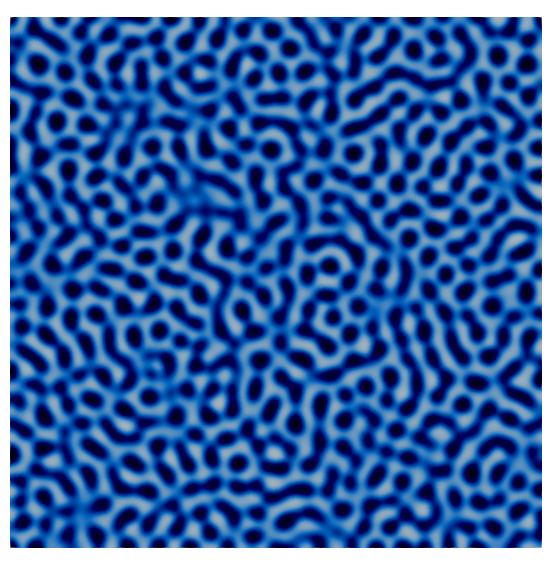}
\includegraphics[angle=-0,width=\localwidth]{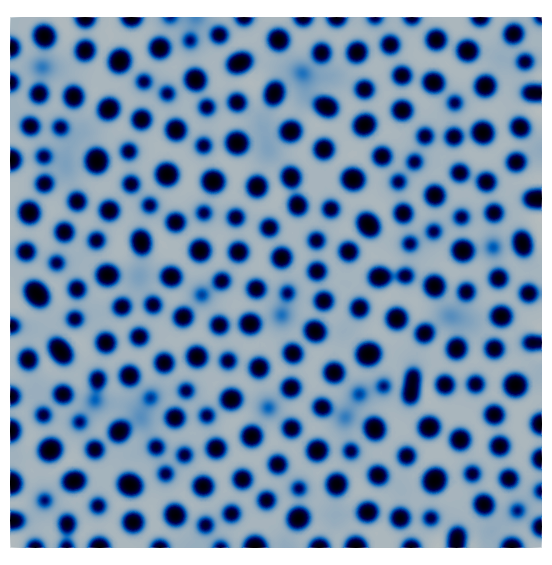}
\includegraphics[angle=-0,width=\localwidth]{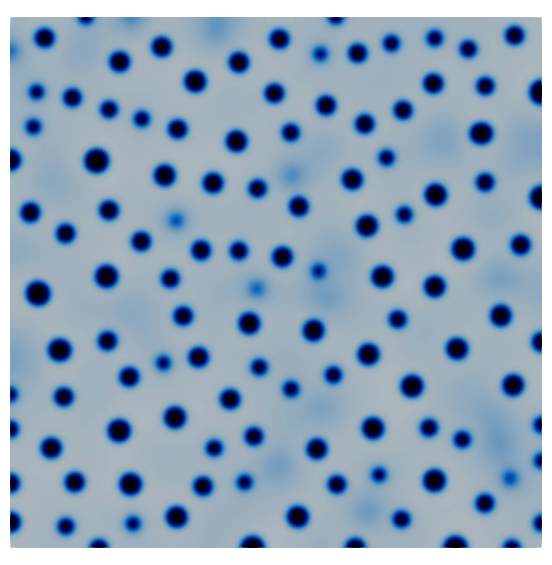}
\includegraphics[angle=-0,width=\localwidth]{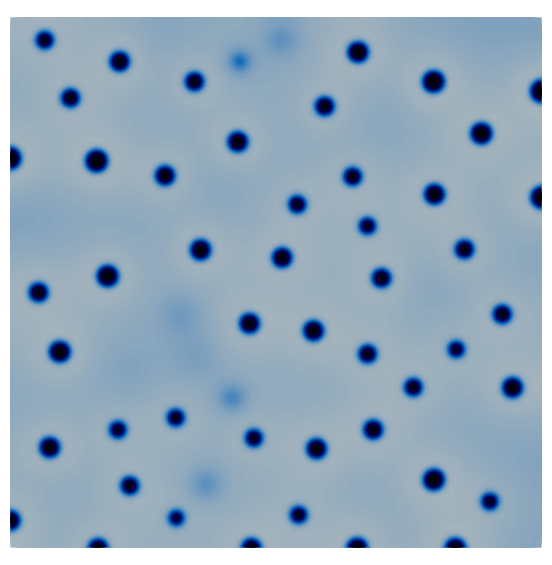}
\includegraphics[angle=-0,width=\localwidth]{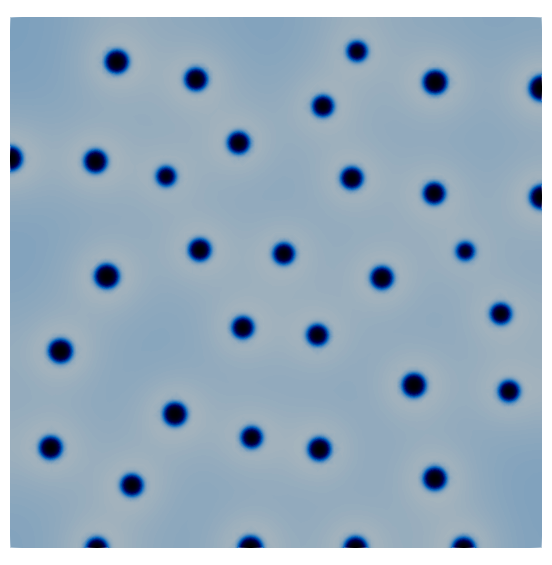}
\includegraphics[angle=-0,width=\localwidth]{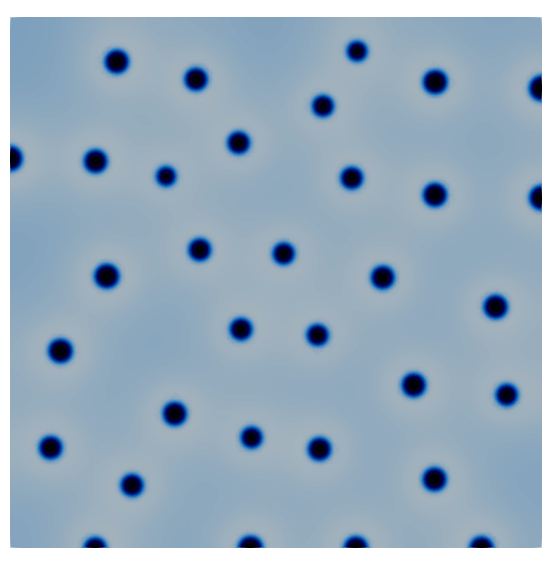}}
\caption{($\eps=\frac1{16\pi}$, $\Omega=(0,4)^2$) 
Evolution for $\beta=0.002$, $S_-=0.25$, $S_+ = -4$. 
We show the solution at times $t=0.1,0.5,1,2,10,100$.
{\anold{In our numerical computations the solution at the final time appears to be a steady state.}}
} 
\label{fig:square2_16pinewR2sd_beta0002}
% ~/hpc_cluster/data/alberta/ool/2d.aug16pi.square2sd.beta2e-3_025-4_new
\end{figure}%
Increasing the value of $\beta$ to $0.02$ yields the results shown in
Figure~\ref{fig:square2_16pinewR2sd_beta002}.
\begin{figure}
% ../plotool; mv energy.png aug16pinewsquare2sd002_025-4_e.png
% paraview --data=uw_h..vtk
%          choose white, blue, black colour palette, save animation
% cp aug16pinewsquare2sd002_025-4_[01][01][0125][015].png ~/tex/glns/ool/figures && scpp aug16pinewsquare2sd002_025-4_[01][01][0125][015].png e23:tex/glns/ool/figures
\center
\newcommand\localwidth{0.16\textwidth}
\mbox{
\includegraphics[angle=-0,width=\localwidth]{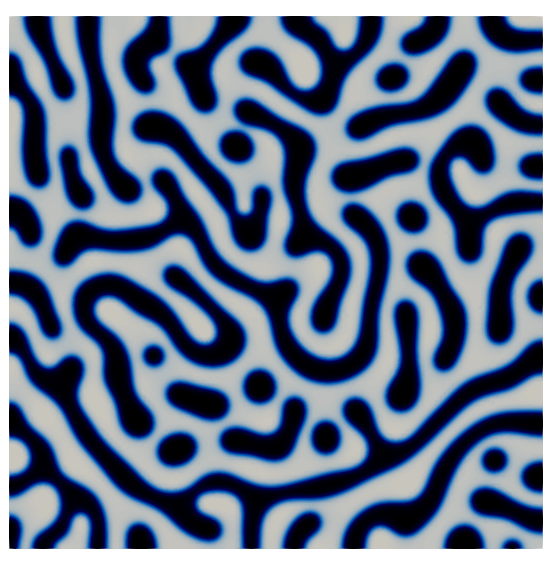}
\includegraphics[angle=-0,width=\localwidth]{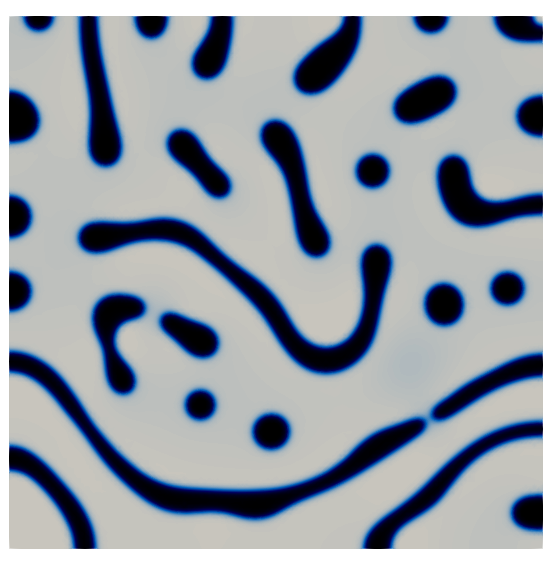}
\includegraphics[angle=-0,width=\localwidth]{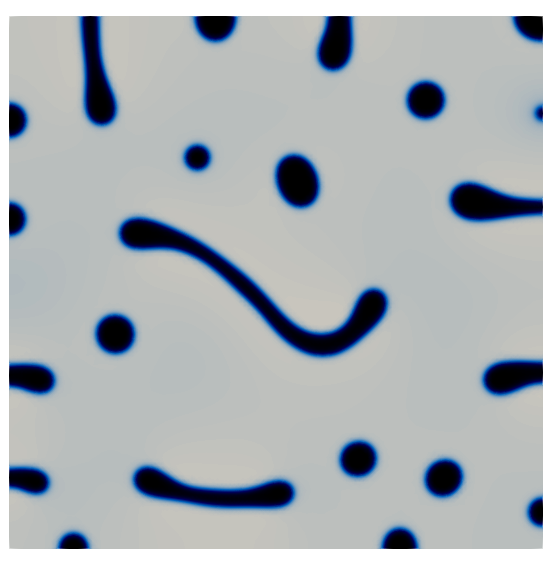}
\includegraphics[angle=-0,width=\localwidth]{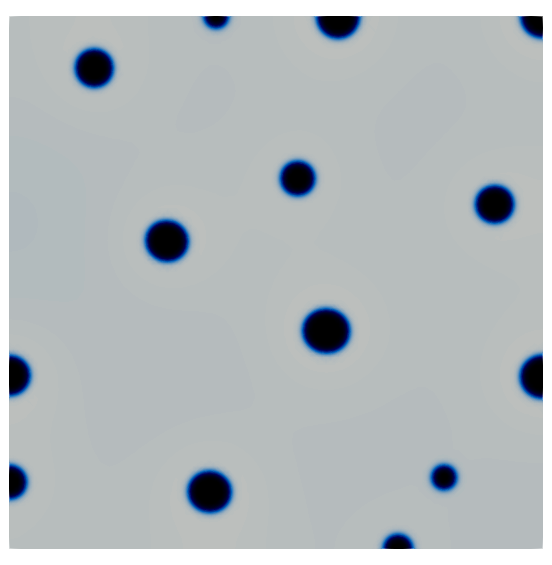}
\includegraphics[angle=-0,width=\localwidth]{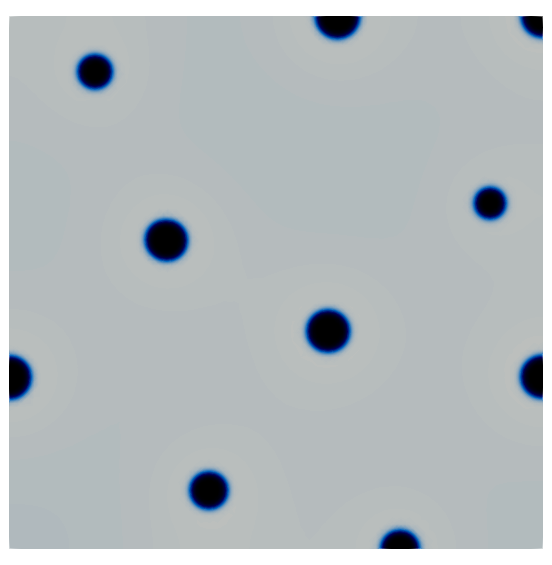}
\includegraphics[angle=-0,width=\localwidth]{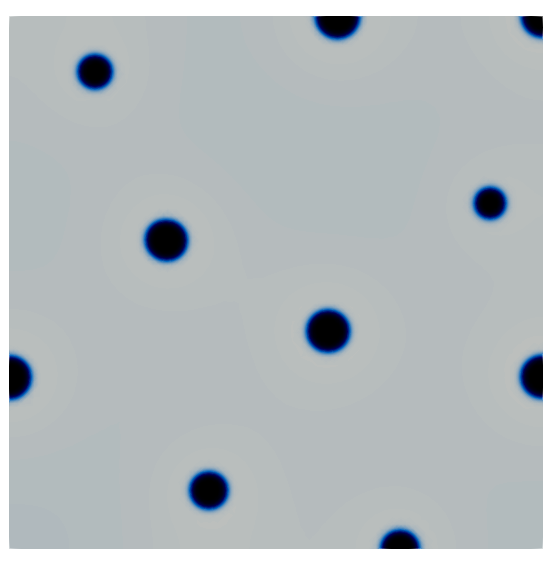}}
\caption{($\eps=\frac1{16\pi}$, $\Omega=(0,4)^2$) 
Evolution for $\beta=0.02$, $S_-=0.25$, $S_+ = -4$. 
We show the solution at times $t=0.1,0.5,1,2,10,100$.
{\anold{In our numerical computations the solution at the final time appears to be a steady state.}}
} 
\label{fig:square2_16pinewR2sd_beta002}
% ~/hpc_cluster/data/alberta/ool/2d.aug16pi.square2sd.beta2e-2_025-4_new
\end{figure}%
\begin{comment}
The same experiment on the unit square $\Omega=(0,1)^2$ leads to the evolution
shown in Figure~\ref{fig:square_16pinewR2sd_beta002}. 
\begin{figure}
% ../plotool; mv energy.png aug16pinewsquaresd002_025-4_e.png
% paraview --data=uw_h..vtk
%          choose white, blue, black colour palette, save animation
% cp aug16pinewsquaresd002_025-4_0[01][015][01235].png ~/tex/glns/ool/figures && scpp aug16pinewsquaresd002_025-4_0[01][015][01235].png e23:tex/glns/ool/figures
\center
\newcommand\localwidth{0.16\textwidth}
\mbox{
\includegraphics[angle=-0,width=\localwidth]{figures/aug16pinewsquaresd002_025-4_0001}
\includegraphics[angle=-0,width=\localwidth]{figures/aug16pinewsquaresd002_025-4_0002}
\includegraphics[angle=-0,width=\localwidth]{figures/aug16pinewsquaresd002_025-4_0005}
\includegraphics[angle=-0,width=\localwidth]{figures/aug16pinewsquaresd002_025-4_0012}
\includegraphics[angle=-0,width=\localwidth]{figures/aug16pinewsquaresd002_025-4_0013}
\includegraphics[angle=-0,width=\localwidth]{figures/aug16pinewsquaresd002_025-4_0100}
}
\caption{($\eps=\frac1{16\pi}$, $\Omega=(0,1)^2$) 
Evolution for $\beta=0.02$, $S_-=0.25$, $S_+ = -4$. 
We show the solution at times $t=0.1,0.2,0.5,1.2,1.3,10$.
{\anold{In our numerical computations the solution at the final time appears to be a steady state.}}
}
\label{fig:square_16pinewR2sd_beta002}
% ~/hpc_cluster/data/alberta/ool/2d.aug16pi.squaresd.beta2e-2_025-4_new
\end{figure}%
\end{comment}

To understand the observed behaviour at the end of the simulations shown in
Figures~\ref{fig:square2_16pinewR2sd_beta0002} and 
\ref{fig:square2_16pinewR2sd_beta002} a bit better, we consider an experiment
with the same physical parameters as in 
Figure~\ref{fig:square2_16pinewR2sd_beta002}, but starting from three circular
initial blobs with radii $0.29$, $0.3$ and $0.31$. As can be seen from the
evolution shown in Figure~\ref{fig:3balls2d}, the three blobs very quickly
reduce in size and move apart from each other. Soon after they settle on an
arrangement that in our numerical computations is a steady state.
\begin{figure}
% ../plotool; mv energy.png aug16pi3balls_002_025-4_e.png
% paraview --data=uw_h..vtk
%          choose white, blue, black colour palette, save animation
% cp aug16pi3balls_002_025-4_0[01][01][01].png ~/tex/glns/ool/figures && scpp aug16pi3balls_002_025-4_0[01][01][01].png e23:tex/glns/ool/figures
\center
\newcommand\localwidth{0.19\textwidth}
\mbox{
\includegraphics[angle=-0,width=\localwidth]{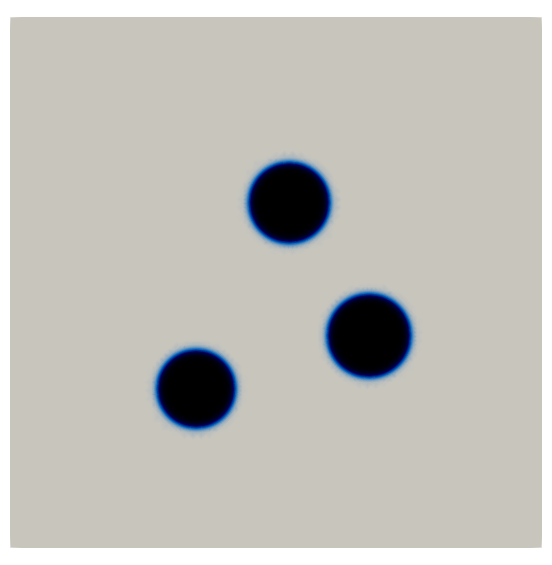}
\includegraphics[angle=-0,width=\localwidth]{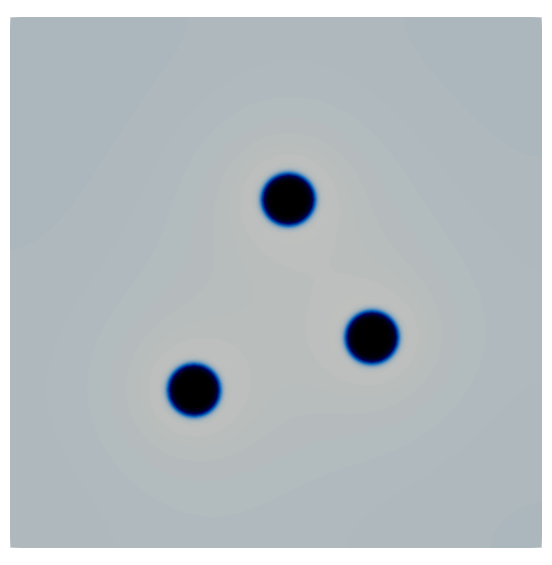}
\includegraphics[angle=-0,width=\localwidth]{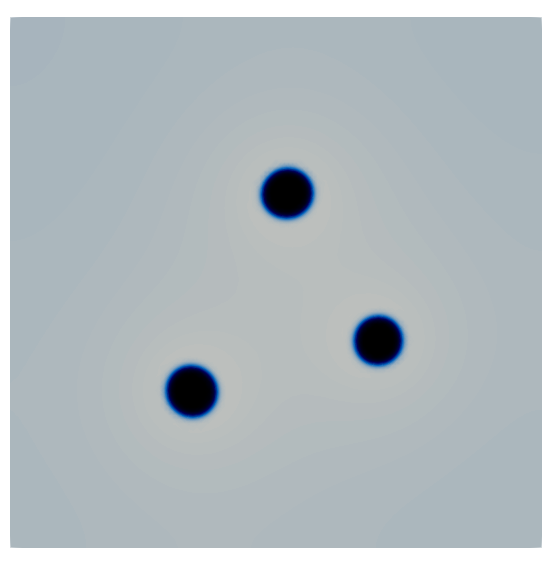}
\includegraphics[angle=-0,width=\localwidth]{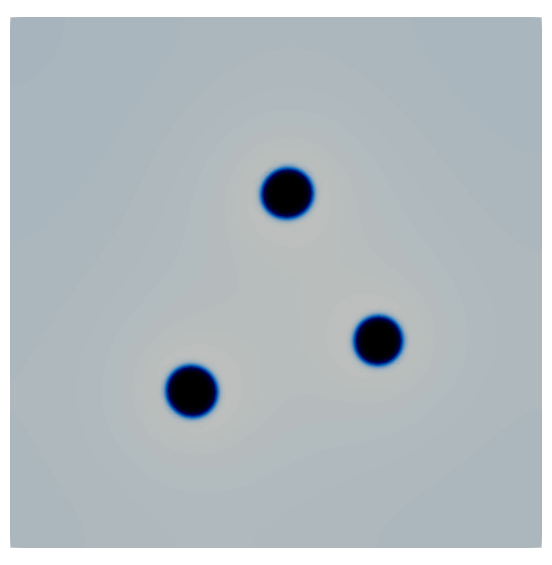}
}
\caption{($\eps=\frac1{16\pi}$, $\Omega=(0,4)^2$) 
Evolution for $\beta=0.02$, $S_-=0.25$, $S_+ = -4$. 
We show the solution at times $t=0,1,10,100$.
{\anold{In our numerical computations the solution at the final time appears to be a steady state.}}
} 
\label{fig:3balls2d}
% ~/hpc_cluster/data/alberta/ool/2d.aug16pi.square2balls3.beta2e-2_025-4_new
\end{figure}%

Next we consider some analogous simulations in 3d. For the simulations in
Figures~\ref{fig:3d_8pisd_beta002},
% \ref{fig:3d_8pisd_beta001}, 
\ref{fig:3d_8pisd_beta002_1-4} and \ref{fig:3d_8pisd_beta001_1-4} we let
$\Omega=(0,1)^3$ and always choose $S_+=-4$. For the remaining physical
parameters we choose $(\beta,S_-) = (0.02, 0.25), (0.1, 0.25), (0.02, 1),
(0.1,1)$, respectively. For the phase field parameter we choose
$\eps=\frac1{8\pi}$, and set $\tau=10^{-4}$.
\begin{figure}
% paraview --data=uw_h..vtk
%          choose white, blue, black colour palette, save animation
% cp 3d8pisd002_025-4*_00[0-5][0125].png ~/tex/glns/ool/figures && scpp 3d8pisd002_025-4*_00[0-5][0125].png e23:tex/glns/ool/figures
\center
\mbox{
\includegraphics[angle=-0,width=0.2\textwidth]{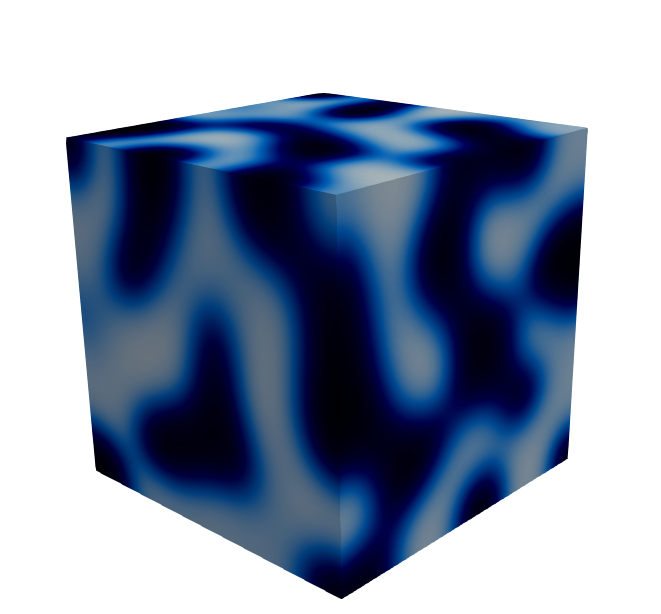}
\includegraphics[angle=-0,width=0.2\textwidth]{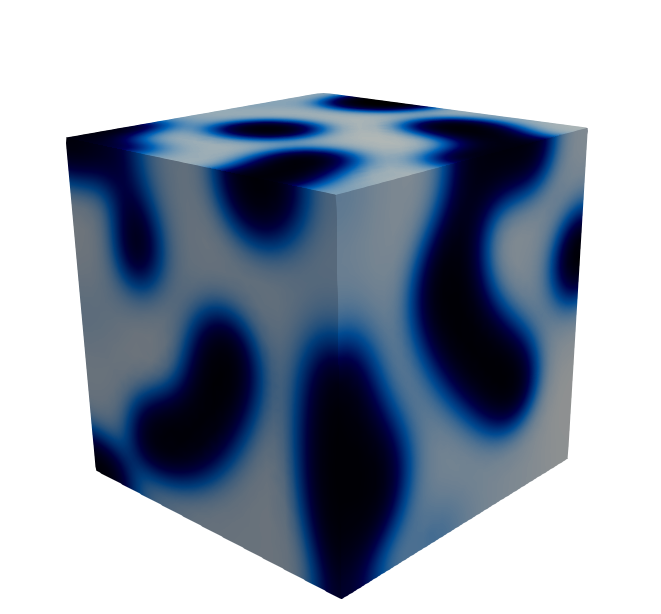}
\includegraphics[angle=-0,width=0.2\textwidth]{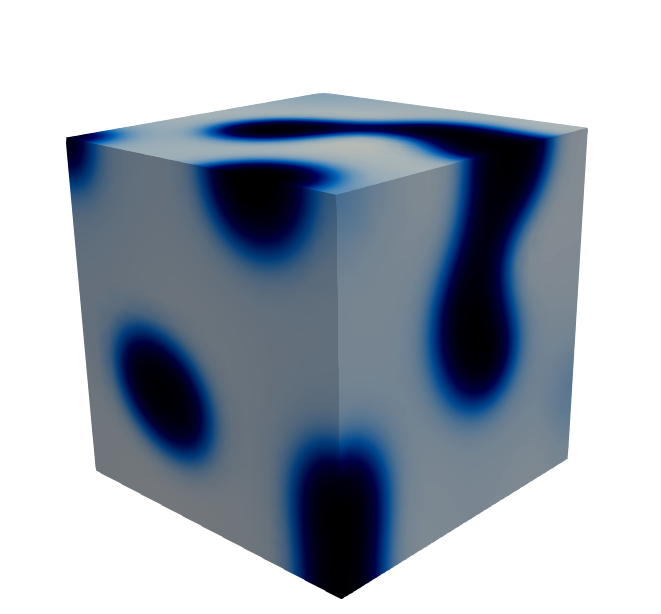}
\includegraphics[angle=-0,width=0.2\textwidth]{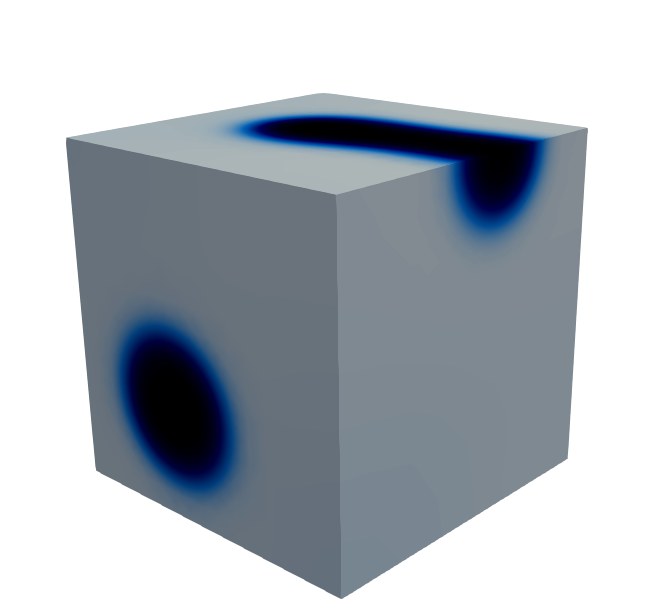}
\includegraphics[angle=-0,width=0.2\textwidth]{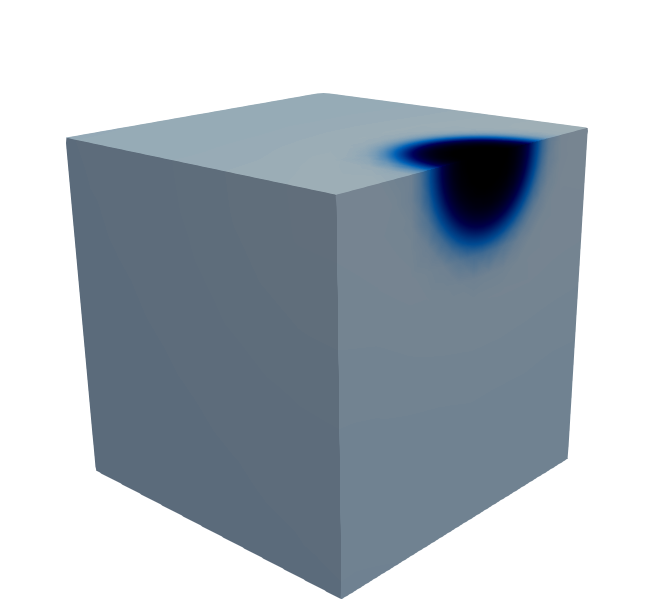}}
\mbox{
\includegraphics[angle=-0,width=0.2\textwidth]{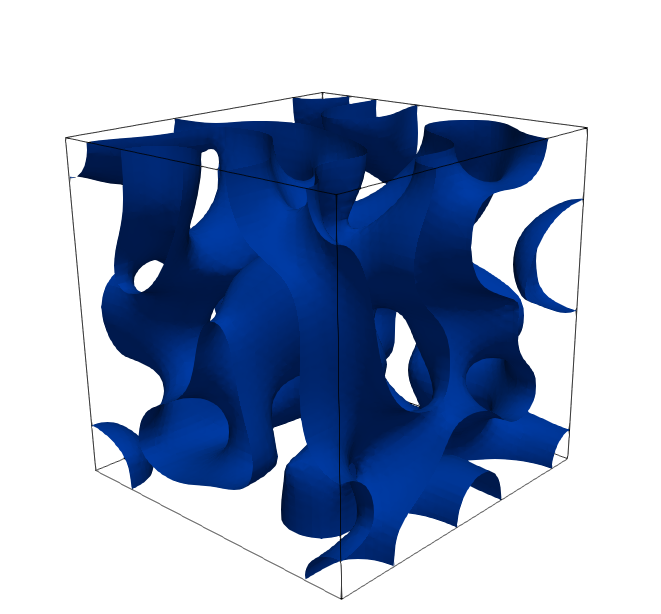}
\includegraphics[angle=-0,width=0.2\textwidth]{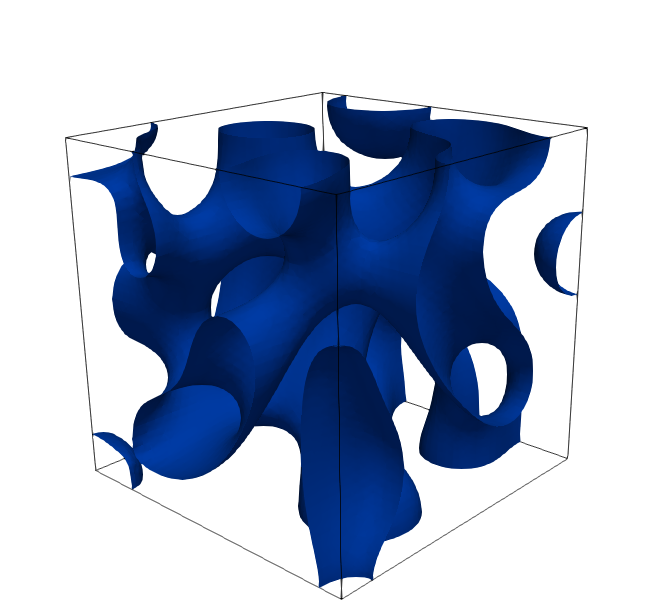}
\includegraphics[angle=-0,width=0.2\textwidth]{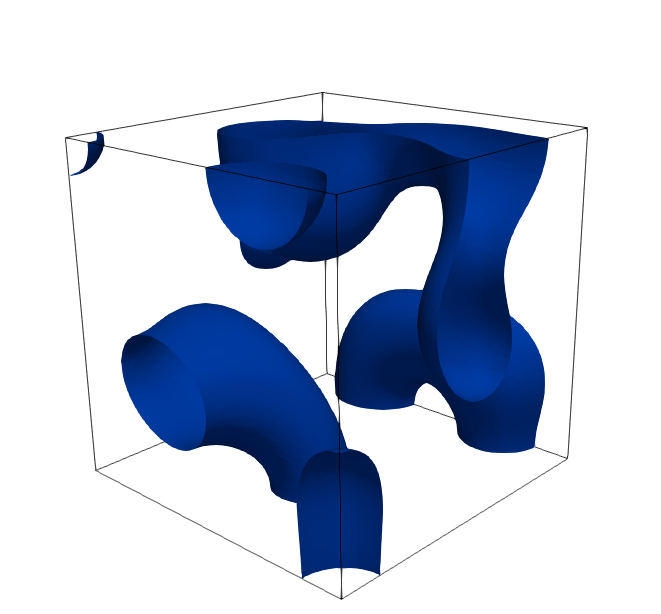}
\includegraphics[angle=-0,width=0.2\textwidth]{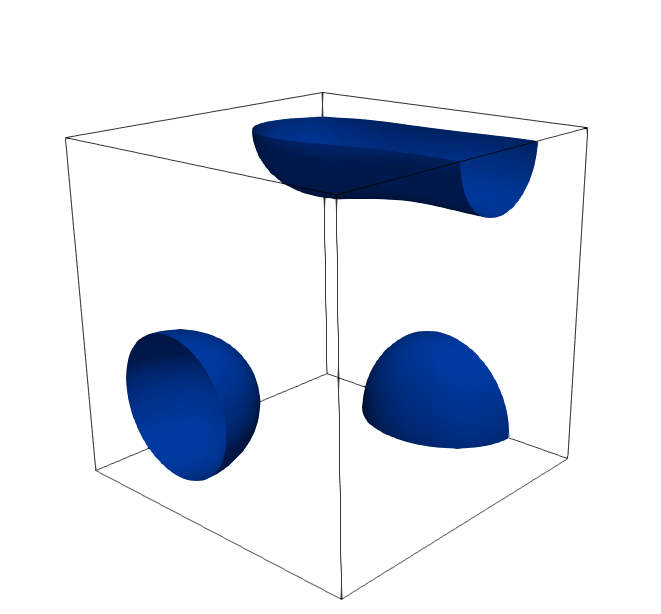}
\includegraphics[angle=-0,width=0.2\textwidth]{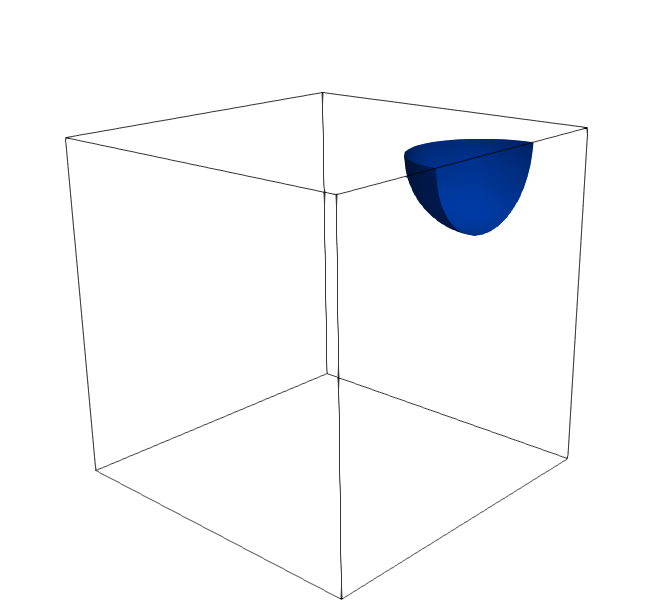}}
\caption{($\eps=\frac1{8\pi}$, $\Omega=(0,1)^3$) 
Evolution for $\beta=0.02$, $S_-=0.25$, $S_+ = -4$. 
We show the solution at times $t=0.1,0.2,0.5,1,5$.}
\label{fig:3d_8pisd_beta002}
% ~/hpc_cluster/data/alberta/ool/3d.8pi.squaresd.beta2e-2_025-4_new
\end{figure}%
%\begin{figure}
%% paraview --data=uw_h..vtk
%%          choose white, blue, black colour palette, save animation
%% cp 3d8pisd001_025-4*_0[0-5][015][02].png ~/tex/glns/ool/figures && scpp 3d8pisd001_025-4*_0[0-5][015][02].png e23:tex/glns/ool/figures
%\center
%\mbox{
%\includegraphics[angle=-0,width=0.2\textwidth]{figures/3d8pisd001_025-4_0002}
%\includegraphics[angle=-0,width=0.2\textwidth]{figures/3d8pisd001_025-4_0010}
%\includegraphics[angle=-0,width=0.2\textwidth]{figures/3d8pisd001_025-4_0050}
%\includegraphics[angle=-0,width=0.2\textwidth]{figures/3d8pisd001_025-4_0100}
%\includegraphics[angle=-0,width=0.2\textwidth]{figures/3d8pisd001_025-4_0500}}
%\mbox{
%\includegraphics[angle=-0,width=0.2\textwidth]{figures/3d8pisd001_025-4ls_0002}
%\includegraphics[angle=-0,width=0.2\textwidth]{figures/3d8pisd001_025-4ls_0010}
%\includegraphics[angle=-0,width=0.2\textwidth]{figures/3d8pisd001_025-4ls_0050}
%\includegraphics[angle=-0,width=0.2\textwidth]{figures/3d8pisd001_025-4ls_0100}
%\includegraphics[angle=-0,width=0.2\textwidth]{figures/3d8pisd001_025-4ls_0500}}
%\caption{($\eps=\frac1{8\pi}$, $\Omega=(0,1)^3$) 
%Evolution for $\beta=0.1$, $S_-=0.25$, $S_+ = -4$. 
%We show the solution at times $t=0.02,0.1,0.5,1,5$.}
%\label{fig:3d_8pisd_beta001}
%% ~/hpc_cluster/data/alberta/ool/3d.8pi.squaresd.beta1e-1_025-4
%\end{figure}%
%\footnote{I removed one 3d figure}
\begin{figure}
% paraview --data=uw_h..vtk
%          choose white, blue, black colour palette, save animation
% cp 3d8pisd002_1-4*_0[0-5][015][015].png ~/tex/glns/ool/figures && scpp 3d8pisd002_1-4*_0[0-5][015][015].png e23:tex/glns/ool/figures
\center
\mbox{
\includegraphics[angle=-0,width=0.2\textwidth]{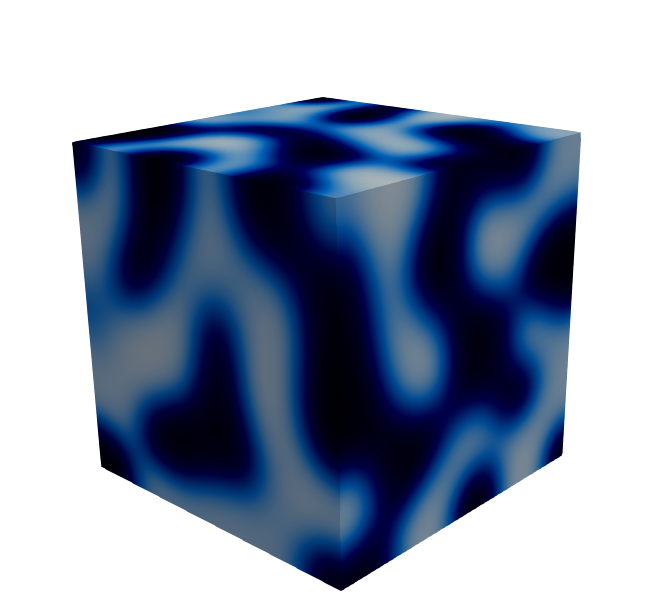}
\includegraphics[angle=-0,width=0.2\textwidth]{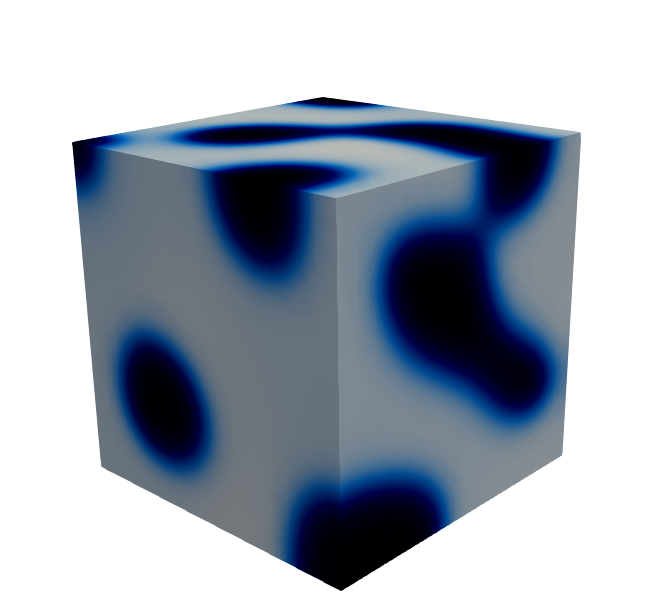}
\includegraphics[angle=-0,width=0.2\textwidth]{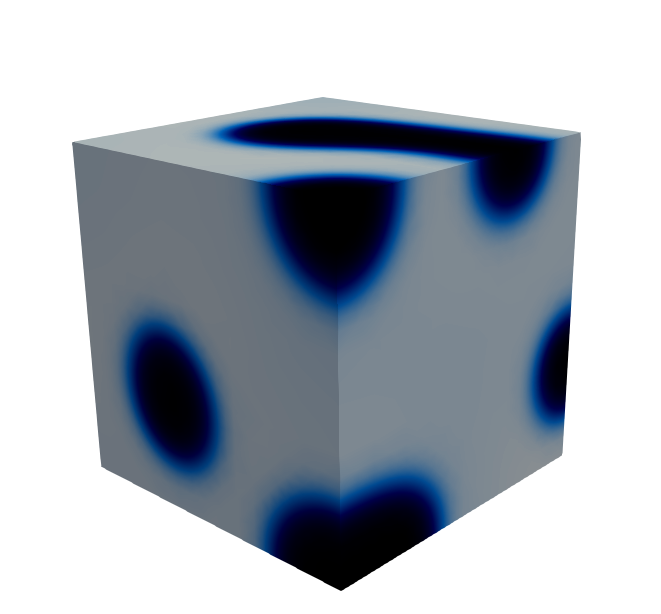}
\includegraphics[angle=-0,width=0.2\textwidth]{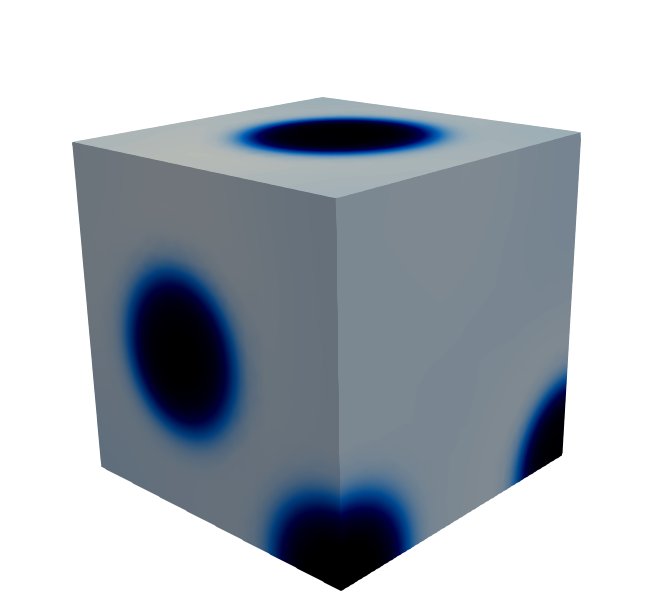}
\includegraphics[angle=-0,width=0.2\textwidth]{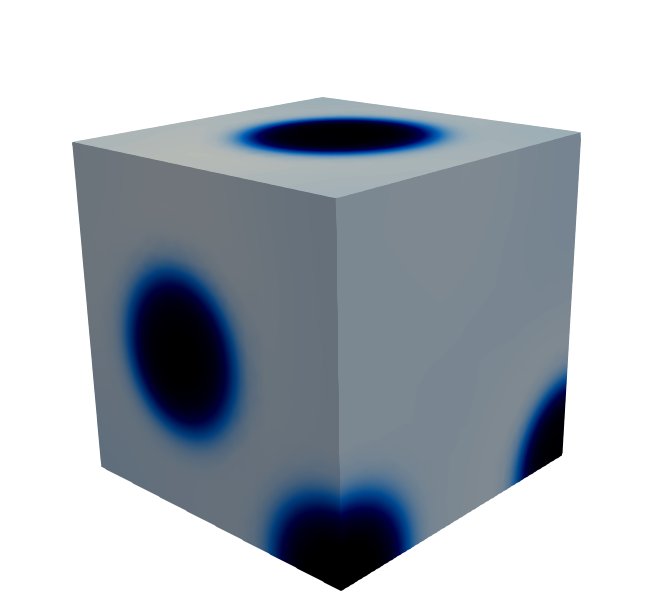}}
\mbox{
\includegraphics[angle=-0,width=0.2\textwidth]{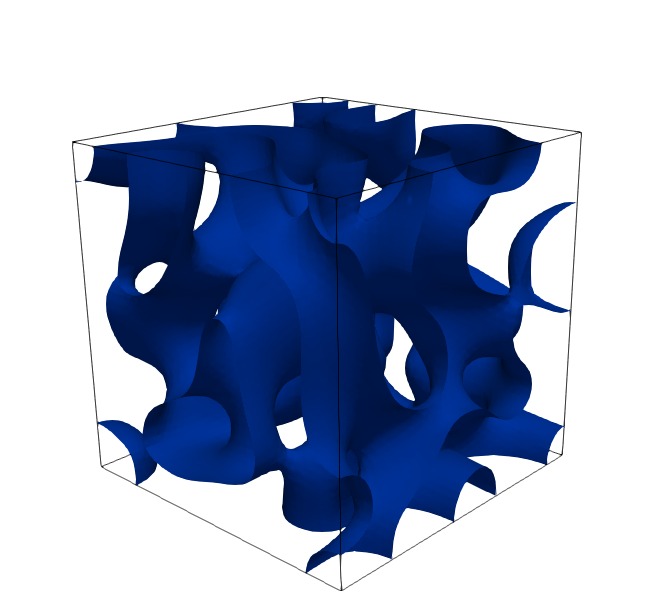}
\includegraphics[angle=-0,width=0.2\textwidth]{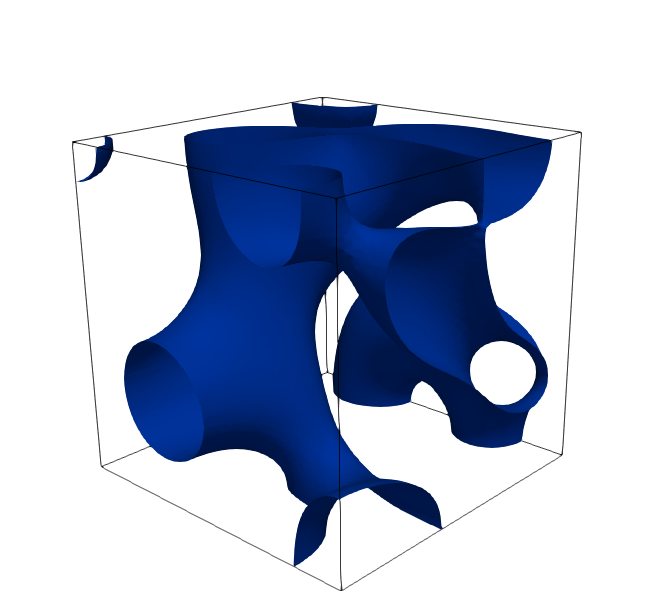}
\includegraphics[angle=-0,width=0.2\textwidth]{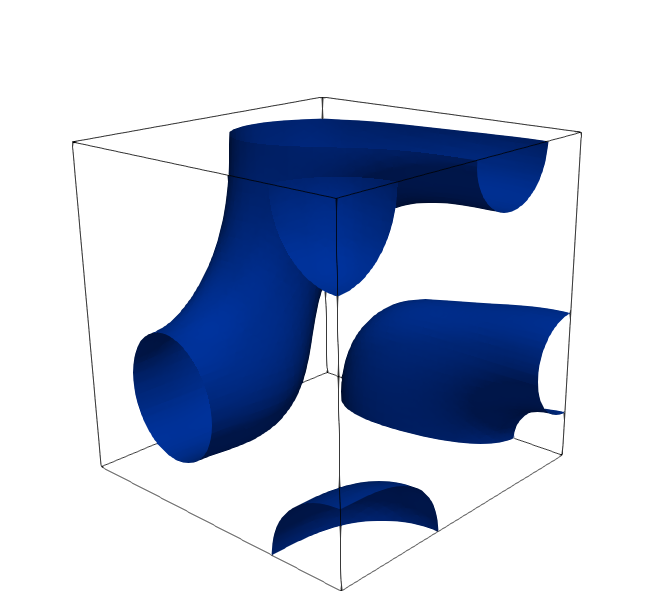}
\includegraphics[angle=-0,width=0.2\textwidth]{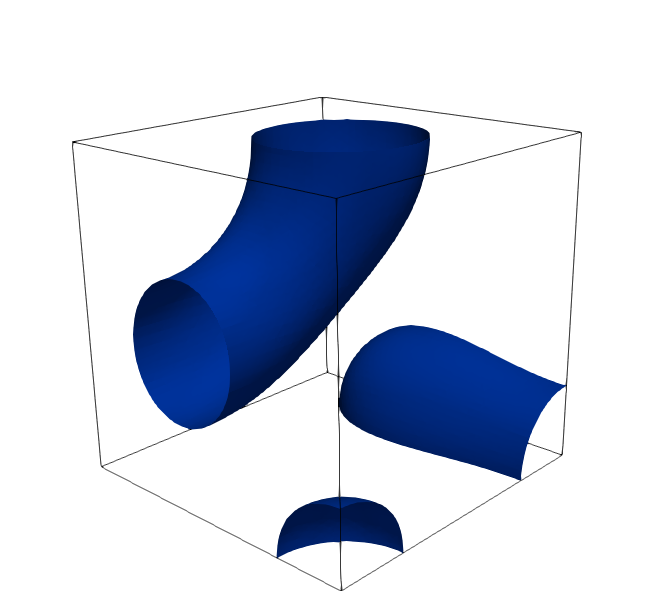}
\includegraphics[angle=-0,width=0.2\textwidth]{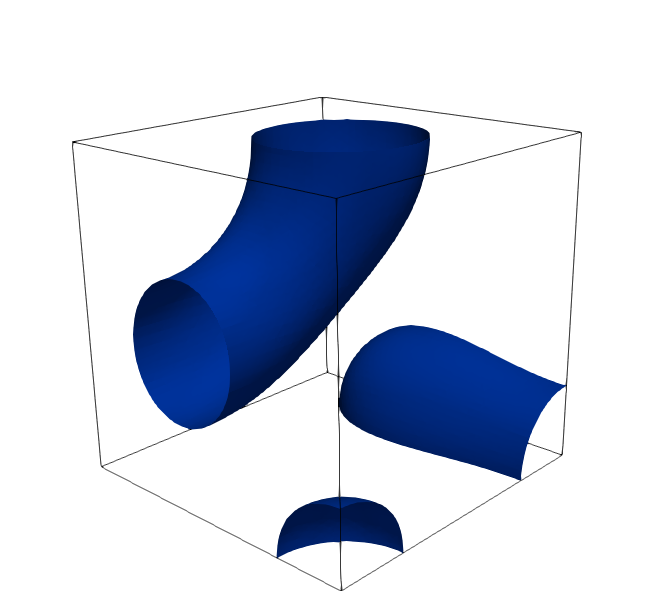}}
\caption{($\eps=\frac1{8\pi}$, $\Omega=(0,1)^3$) 
Evolution for $\beta=0.02$, $S_-=1$, $S_+ = -4$. 
We show the solution at times $t=0.01,0.05,0.1,0.5,5$.}
\label{fig:3d_8pisd_beta002_1-4}
% ~/hpc_cluster/data/alberta/ool/3d.8pi.squaresd.beta2e-2_1-4
\end{figure}%
\begin{figure}
% paraview --data=uw_h..vtk
%          choose white, blue, black colour palette, save animation
% cp 3d8pisd001_1-4*_0[0-5][015][02].png ~/tex/glns/ool/figures && scpp 3d8pisd001_1-4*_0[0-5][015][02].png e23:tex/glns/ool/figures
\center
\mbox{
\includegraphics[angle=-0,width=0.2\textwidth]{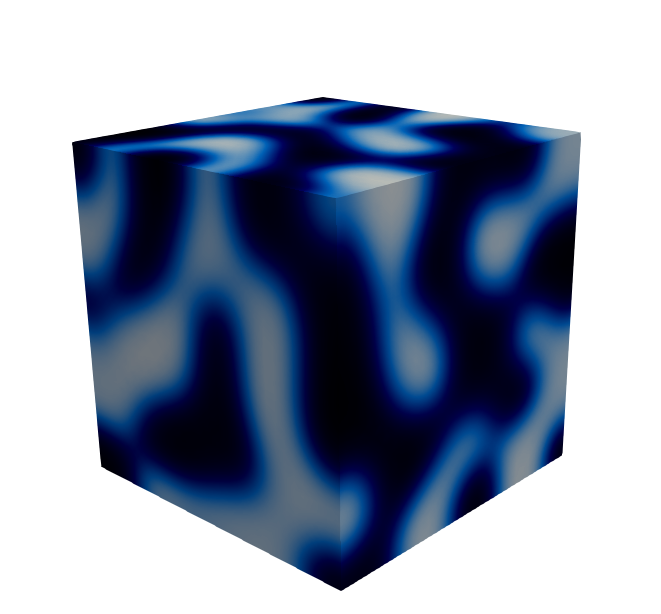}
\includegraphics[angle=-0,width=0.2\textwidth]{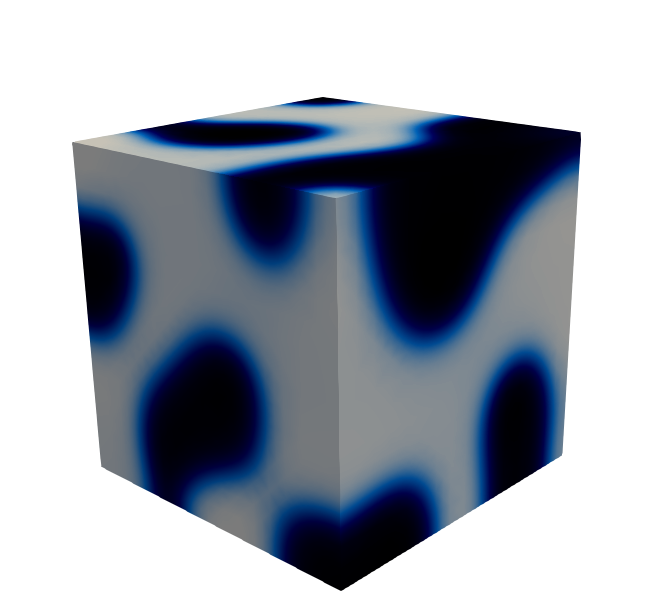}
\includegraphics[angle=-0,width=0.2\textwidth]{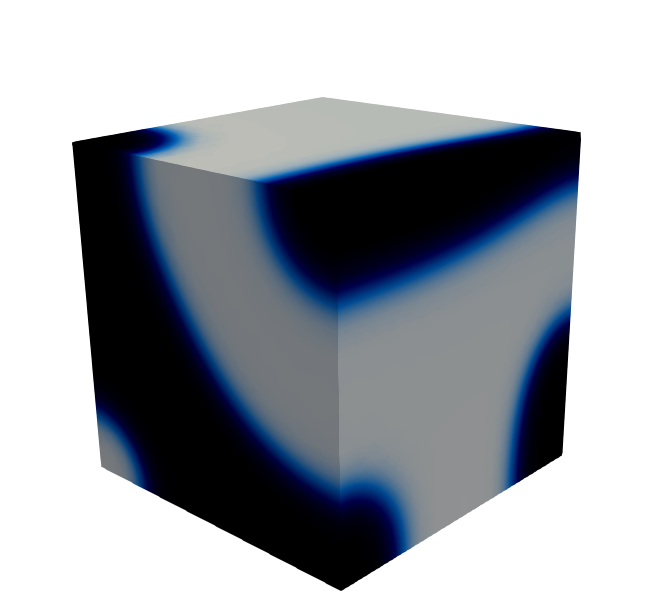}
\includegraphics[angle=-0,width=0.2\textwidth]{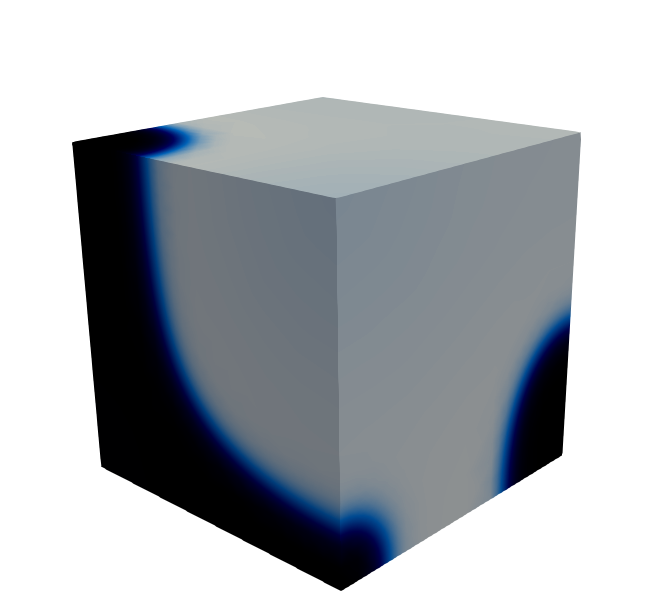}
\includegraphics[angle=-0,width=0.2\textwidth]{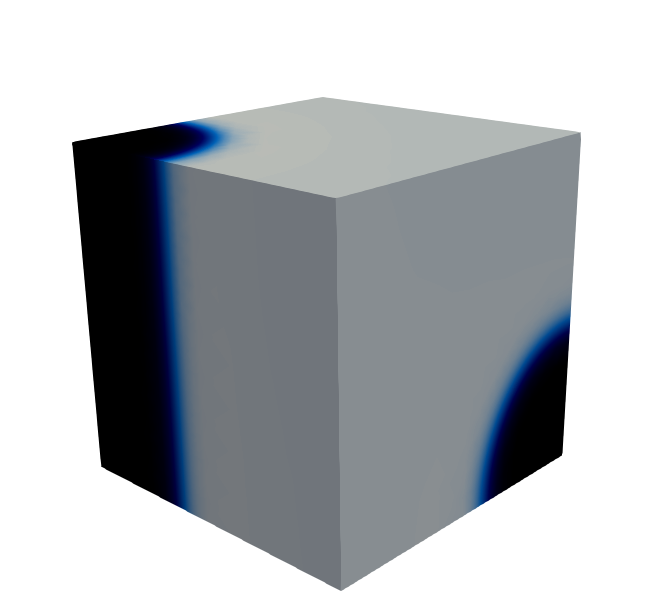}}
\mbox{
\includegraphics[angle=-0,width=0.2\textwidth]{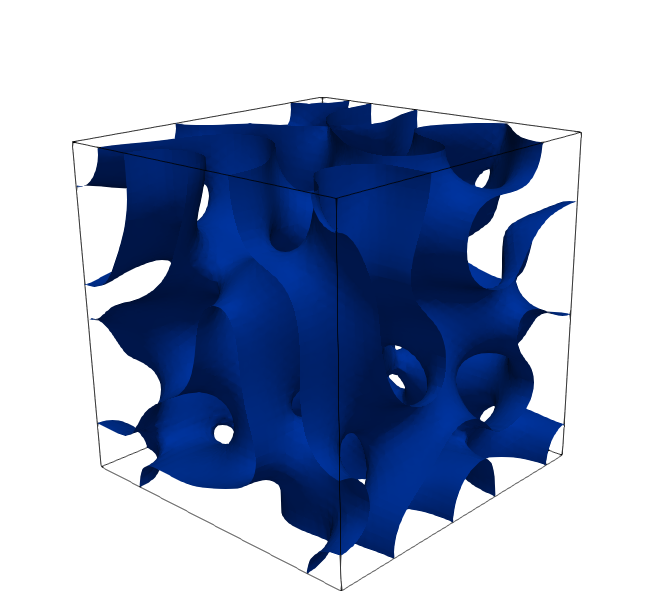}
\includegraphics[angle=-0,width=0.2\textwidth]{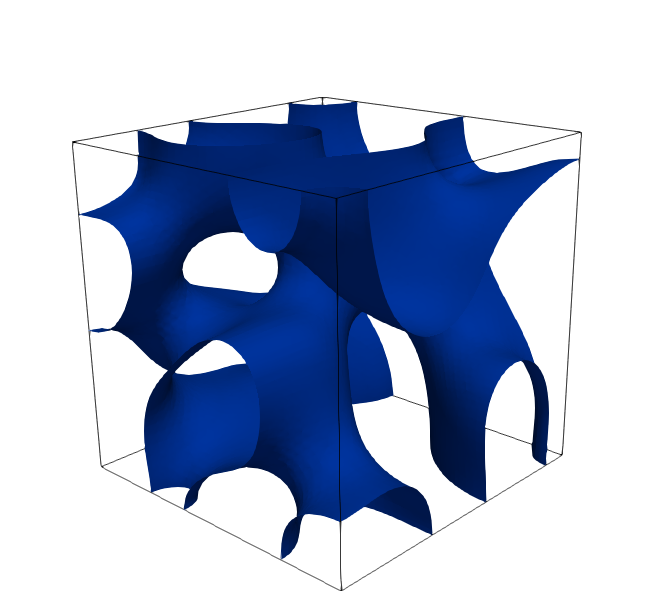}
\includegraphics[angle=-0,width=0.2\textwidth]{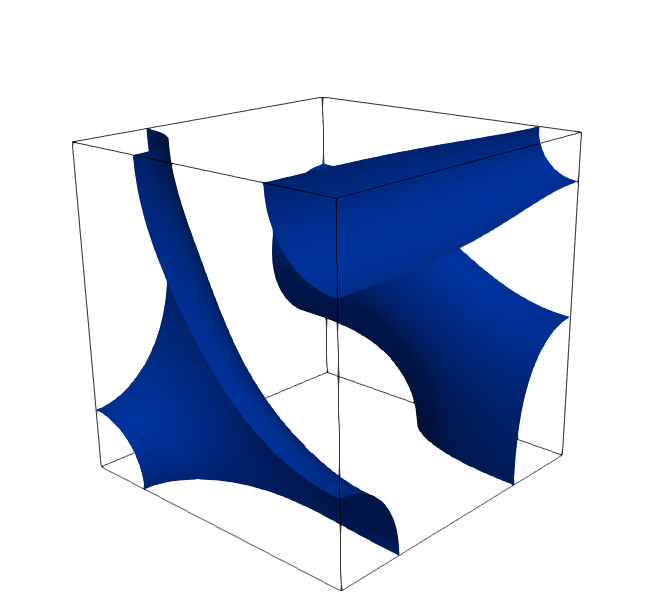}
\includegraphics[angle=-0,width=0.2\textwidth]{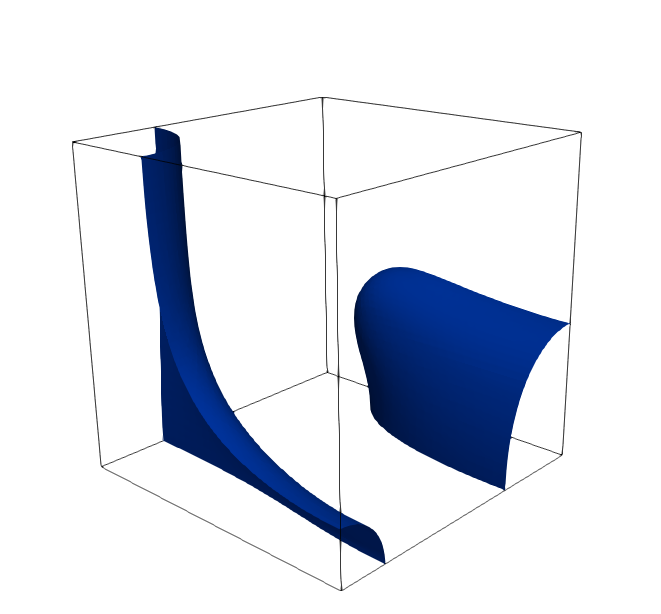}
\includegraphics[angle=-0,width=0.2\textwidth]{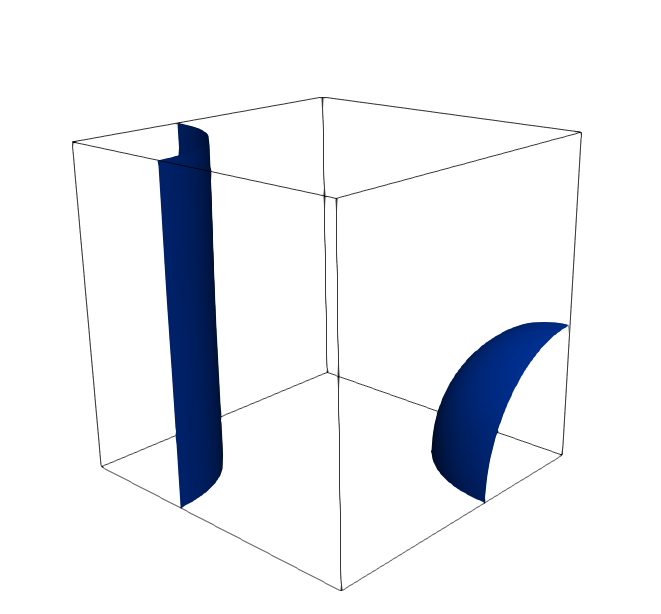}}
\caption{($\eps=\frac1{8\pi}$, $\Omega=(0,1)^3$) 
Evolution for $\beta=0.1$, $S_-=1$, $S_+ = -4$. 
We show the solution at times $t=0.02,0.1,0.5,1,5$.}
\label{fig:3d_8pisd_beta001_1-4}
% ~/hpc_cluster/data/alberta/ool/3d.8pi.squaresd.beta1e-1_1-4
\end{figure}%

For a three-dimensional analogue of Figure~\ref{fig:3balls2d}, we
use the parameters $\beta = 0.1$, $S_- = 0.8$, $S_+= -10$, $m_-=0.2$,
$m_+=0.5$, $\rho_\pm = 0.6$, on the cube $\Omega=(0,4)^3$. 
We start from three spherical initial blobs with radii 
$0.34$, $0.35$ and $0.36$. As can be seen from the
evolution shown in Figure~\ref{fig:3balls3d}, the three blobs hardly
change their size, and soon settle on an
arrangement that in our numerical computations is a steady state.
\mod{As further coarsening is eventually suppresses these two and three dimensional computations show that the active Cahn--Hilliard model can suppress Ostwald ripening. This is in agreement with  \cite{zwickerostwald} which studied the
suppression of Ostwald ripening in so-called active emulsions. In particular, multiple  droplets can be stable. Due to the Neumann boundary conditions,  we can extend the solution obtained by reflections in space  and we then observe also the occurrence of cylinders and toroidal shapes.}
\begin{figure}
% ../plotool; mv energy.png aug16pi3balls_002_025-4_e.png
% paraview --data=uw_h..vtk
%          choose white, blue, black colour palette, save animation
% cp 3djan8pi3balls_01_08-10b_00[015]0.png ~/tex/glns/ool/figures && scpp 3djan8pi3balls_01_08-10b_00[015]0.png e23:tex/glns/ool/figures
\center
\newcommand\localwidth{0.25\textwidth}
\mbox{
\includegraphics[angle=-0,width=\localwidth]{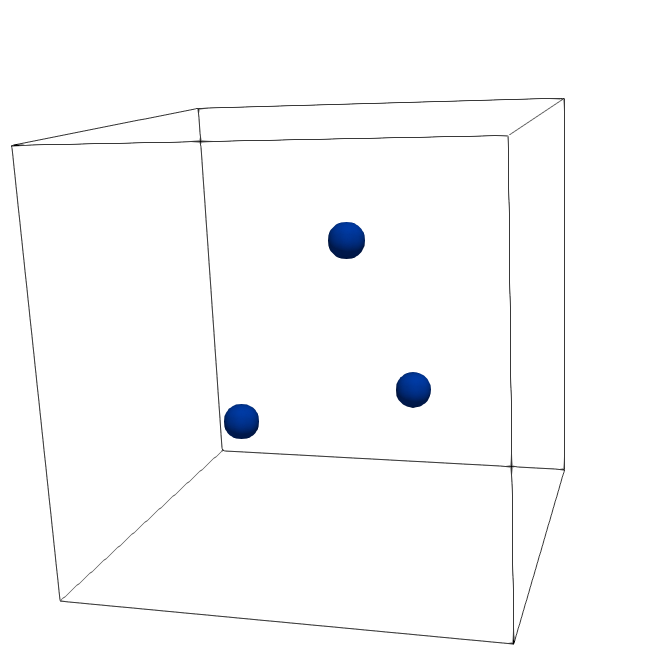}
\includegraphics[angle=-0,width=\localwidth]{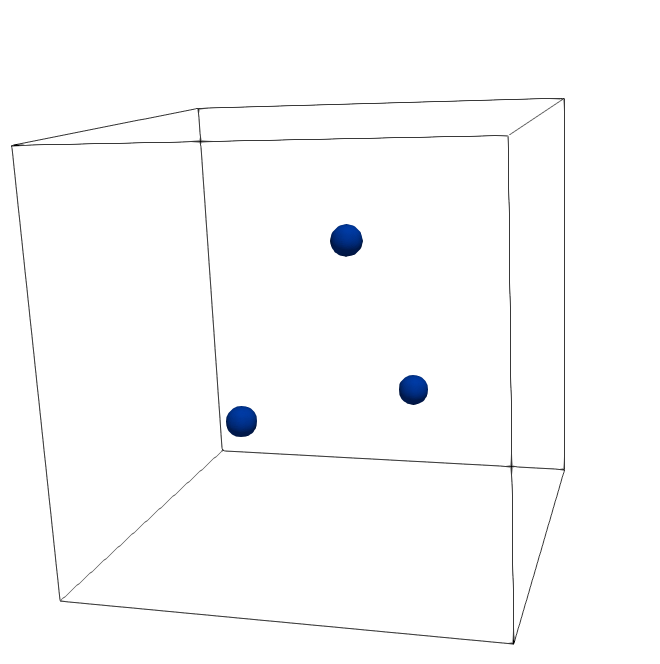}
\includegraphics[angle=-0,width=\localwidth]{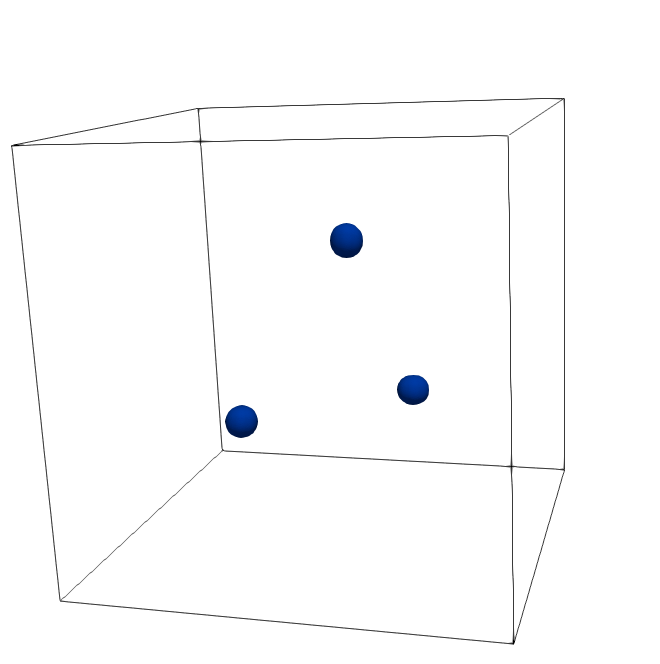}
}
\caption{($\eps=\frac1{8\pi}$, $\Omega=(0,4)^3$) 
Evolution for $\beta=0.1$, $S_- = 0.8$, $S_+= -10$, $m_-=0.2$,
$m_+=0.5$, $\rho_\pm = 0.6$. 
We show the solution at times $t=0,1,5$.
{\anold{In our numerical computations the solution at the final time appears to be a steady state.}}
} 
\label{fig:3balls3d}
% ~/hpc_cluster/data/alberta/ool/3d.8pi.rc05.square8_3balls.beta1e-1_08-10_0606_m0205
\end{figure}%

\subsection{Numerical computations: Active droplets in 2d}

Our first simulation for active droplets shows the possible creation of shells
in 2d. We let $\Omega=(0,2)^2$ and choose $\beta=0.002$, $S_-=0.25$, $S_+ = -4$
for the physical parameters. The initial droplet is a disk of radius 
$r_0=0.4$. For the phase field parameter we choose
$\eps=\frac1{32\pi}$. %, and set $N_f = 8N_c = 64$ with $\tau=10^{-3}$.
See Figure~\ref{fig:square1_32piR2_beta0002}, where we observe
the development of a stable shell.
\begin{figure}
% ../plotool; mv energy.png aug32pisquare1b0002_025-4_e.png
% paraview --data=uw_h..vtk
%          choose white, blue, black colour palette, save animation
% cp aug32pisquare1b0002_025-4_e.png aug32pisquare1b0002_025-4_00[0125][02].png ~/tex/glns/ool/figures && scpp aug32pisquare1b0002_025-4_e.png aug32pisquare1b0002_025-4_00[0125][02].png e23:tex/glns/ool/figures
\center
\mbox{
\includegraphics[angle=-0,width=0.2\textwidth]{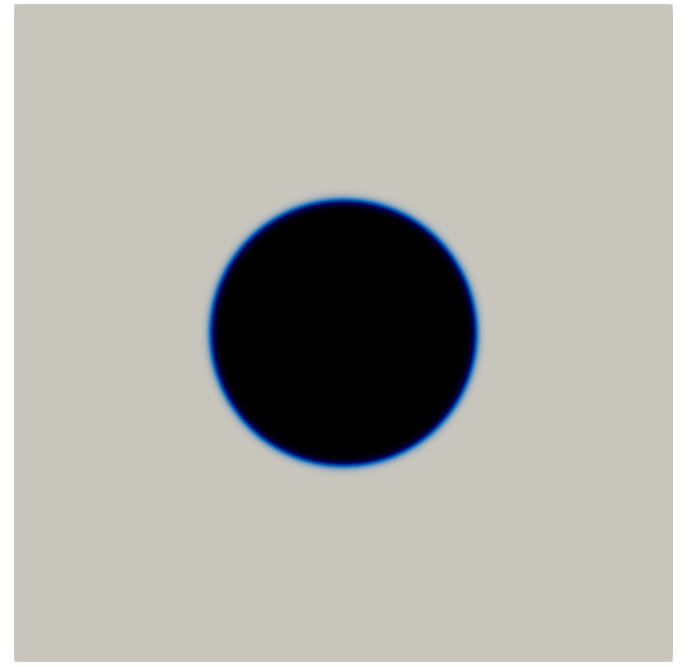}
\includegraphics[angle=-0,width=0.2\textwidth]{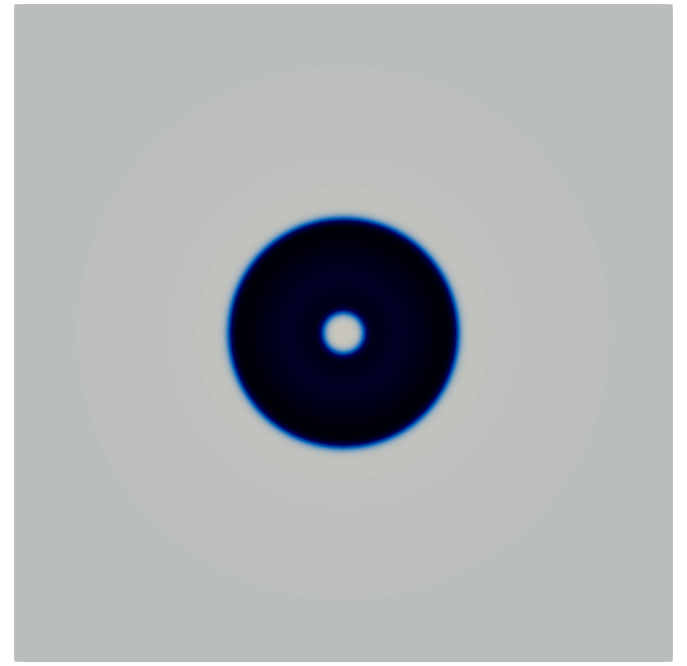}
\includegraphics[angle=-0,width=0.2\textwidth]{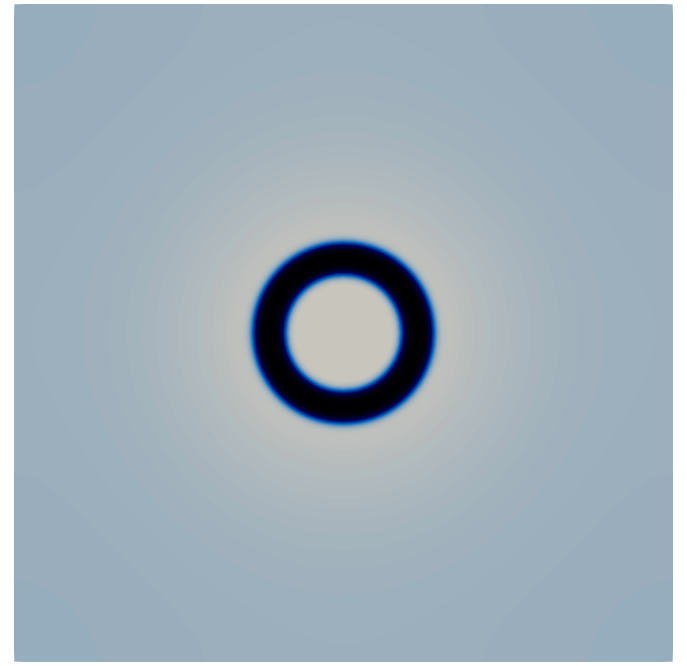}
\includegraphics[angle=-0,width=0.2\textwidth]{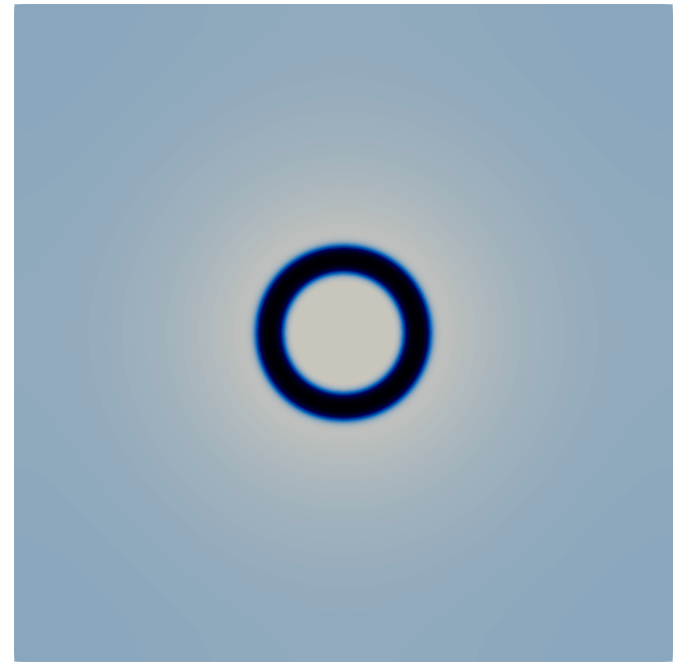}
\includegraphics[angle=-0,width=0.2\textwidth]{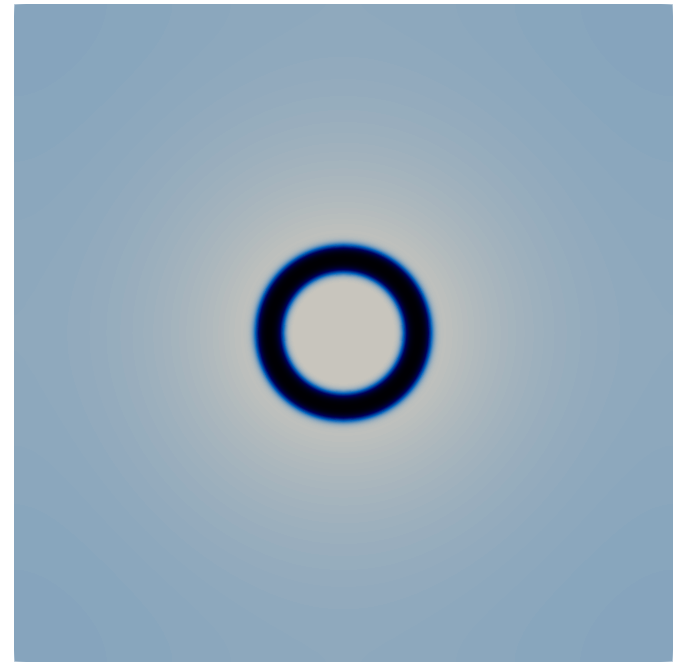}}
\caption{($\eps=\frac1{32\pi}$, $\Omega=(0,2)^2$) 
Evolution for $\beta=0.002$, $S_-=0.25$, $S_+ = -4$. 
We show the solution at times $t=0,0.2,1,2,5$.
%Below a plot of the discrete energy.
}
\label{fig:square1_32piR2_beta0002}
% ~/hpc_cluster/data/alberta/ool/2d.aug32pi.square1.beta2e-3_025-4
\end{figure}%
However, if we reduce the d\anold{i}mension of the domain to the unit square,
$\Omega=(0,1)^2$, then the shell shape is only transient. In fact, the
temporary shell evolves to a smaller, stable disk. See
Figure~\ref{fig:square_32piR2_beta0002} for the results.
%Here we adjusted the discretization parameters to $N_f = 8N_c = 32$.
\begin{figure}
% ../plotool; mv energy.png aug32pisquareb0002_025-4_e.png
% paraview --data=uw_h..vtk
%          choose white, blue, black colour palette, save animation
% cp aug32pisquareb0002_025-4_e.png aug32pisquareb0002_025-4_00[0125][02].png ~/tex/glns/ool/figures && scpp aug32pisquareb0002_025-4_e.png aug32pisquareb0002_025-4_00[0125][02].png e23:tex/glns/ool/figures
\center
\mbox{
\includegraphics[angle=-0,width=0.2\textwidth]{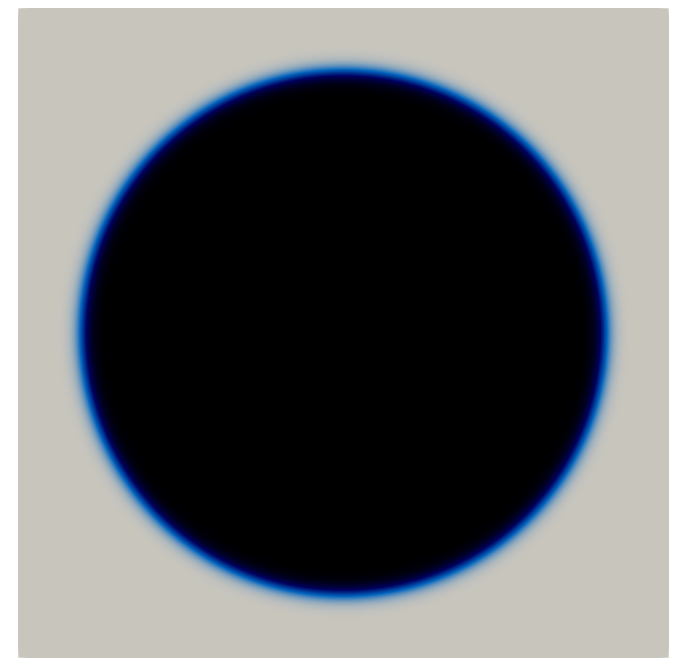}
\includegraphics[angle=-0,width=0.2\textwidth]{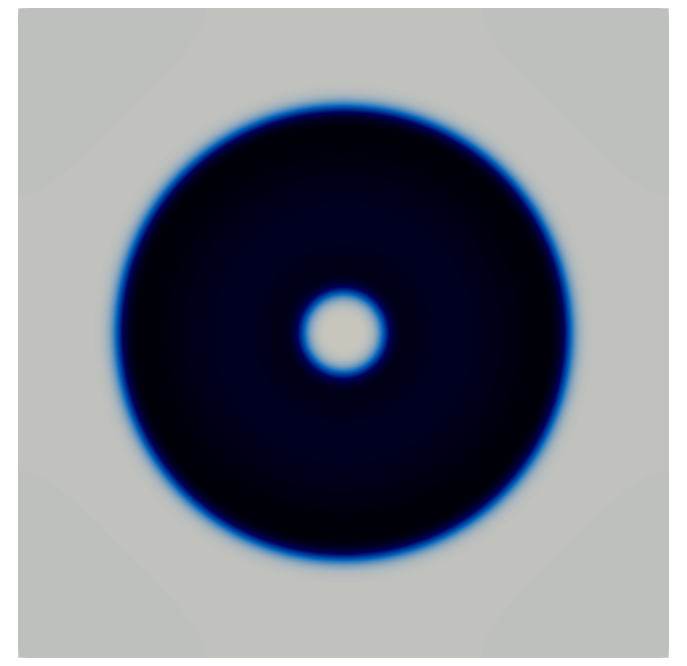}
\includegraphics[angle=-0,width=0.2\textwidth]{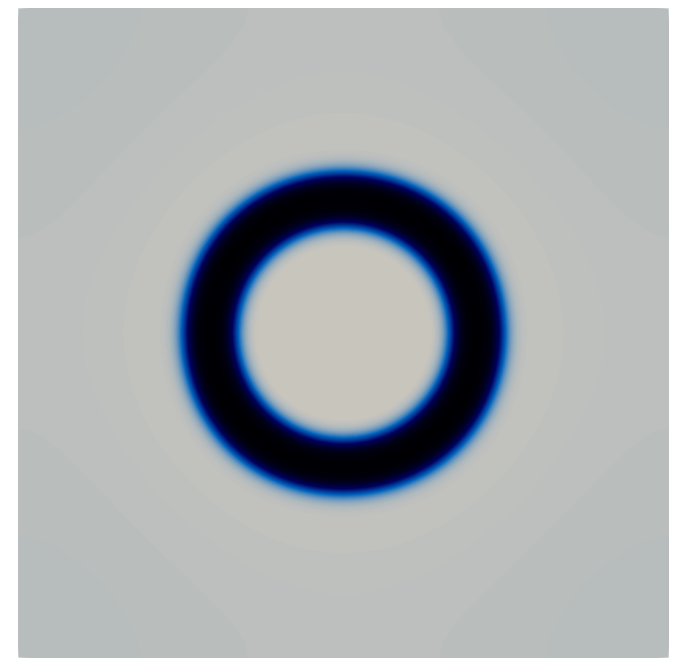}
\includegraphics[angle=-0,width=0.2\textwidth]{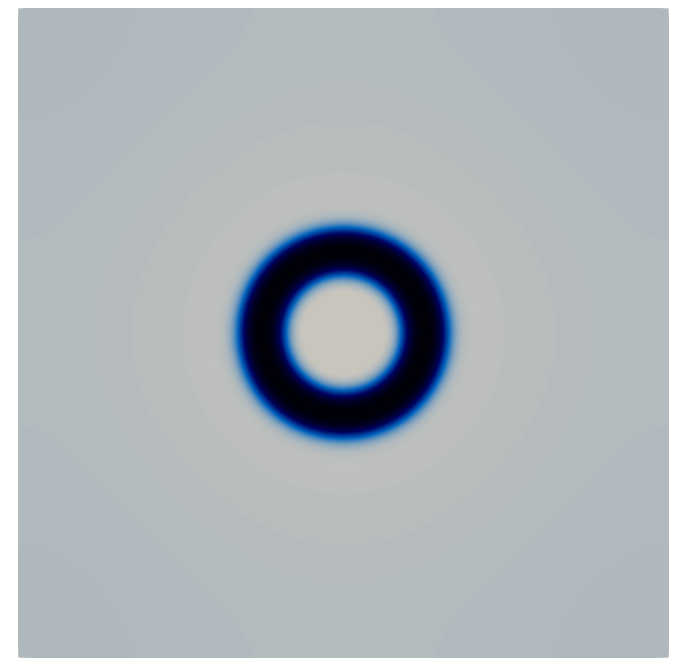}
\includegraphics[angle=-0,width=0.2\textwidth]{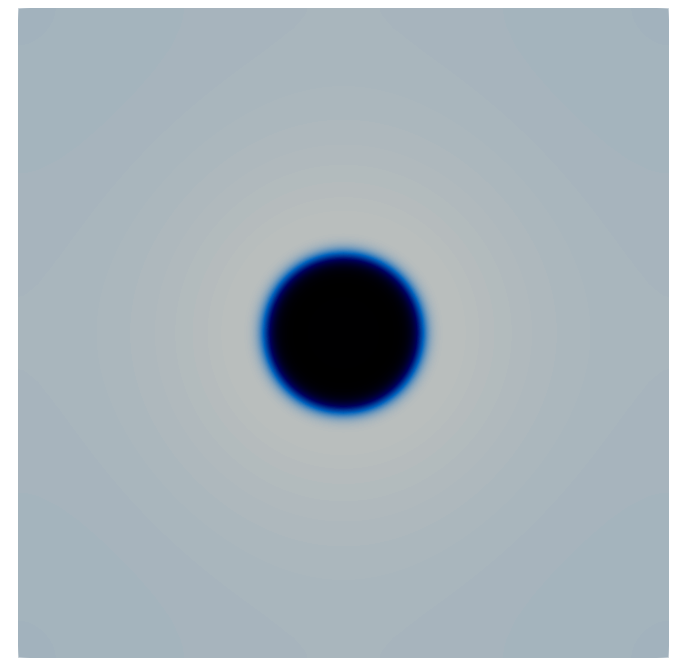}}
\caption{($\eps=\frac1{32\pi}$, $\Omega=(0,1)^2$) 
Evolution for $\beta=0.002$, $S_-=0.25$, $S_+ = -4$. 
We show the solution at times $t=0,0.2,1,2,5$.
%Below a plot of the discrete energy.
}
\label{fig:square_32piR2_beta0002}
% ~/hpc_cluster/data/alberta/ool/2d.aug32pi.square.beta2e-3_025-4
\end{figure}%

The next simulation shows pinch-off for a slightly perturbed initially circular
droplet. In fact, for the ``radius'' of the initial droplet we choose
\begin{equation} \label{eq:r0perturb}
r_0(\theta) = 0.25 + 0.02 \cos(2\theta - \tfrac{2\pi}9), % 20^o
\end{equation}
with $\theta \in [-\pi,\pi]$ denoting the angle in polar coordinates. Hence the
initial blob has its widest dimension along an axis that is tilted by
$20^\circ$ compared to the $x$-axis, with the smallest dimension along an axis
perpendicular to it. Letting this initial droplet evolve in the domain
$\Omega = (0,2)^2$ leads to a thinning of the droplet at the centre of the
domain, and eventually to a pinch-off into two separate droplets.
See Figure~\ref{fig:square1_16pi_r025_beta002} for our numerical results for
$\eps=\frac1{16\pi}$.
\begin{figure}
% ../plotool; mv energy.png aug16pisquare1r025_002_1-4_e.png
% paraview --data=uw_h..vtk
%          choose white, blue, black colour palette, save animation
% cp aug16pisquare1r025_002_1-4_e.png aug16pisquare1r025_002_1-4_0[0,1][0,2,4,6,8]0.png ~/tex/glns/ool/figures && scpp aug16pisquare1r025_002_1-4_e.png aug16pisquare1r025_002_1-4_0[0,1][0,2,4,6,8]0.png e23:tex/glns/ool/figures
\center
\mbox{
\includegraphics[angle=-0,width=0.15\textwidth]{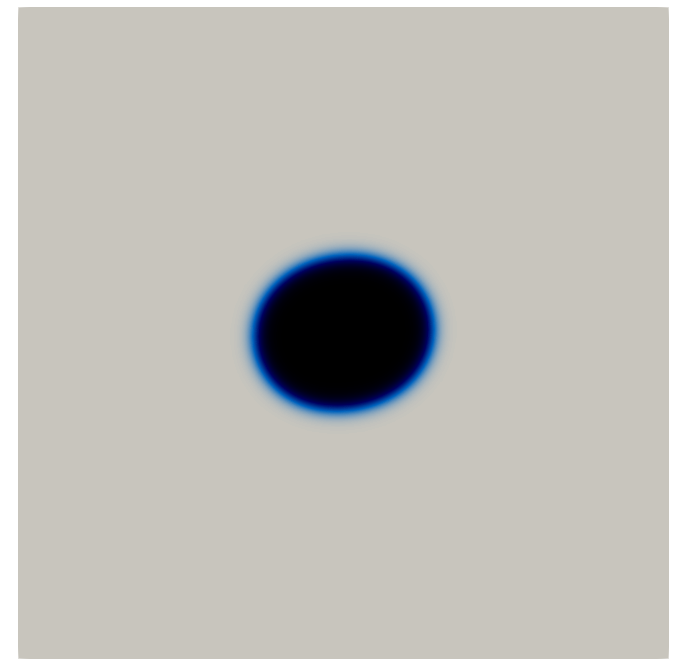}
\includegraphics[angle=-0,width=0.15\textwidth]{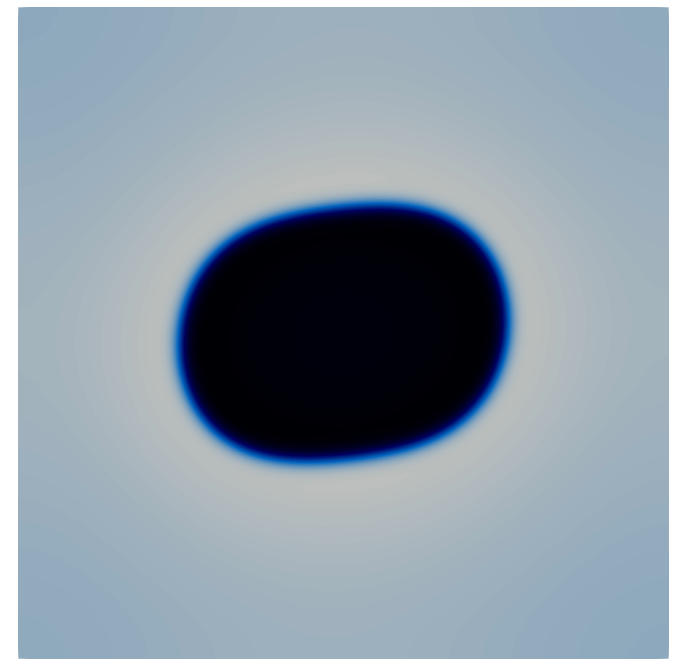}
\includegraphics[angle=-0,width=0.15\textwidth]{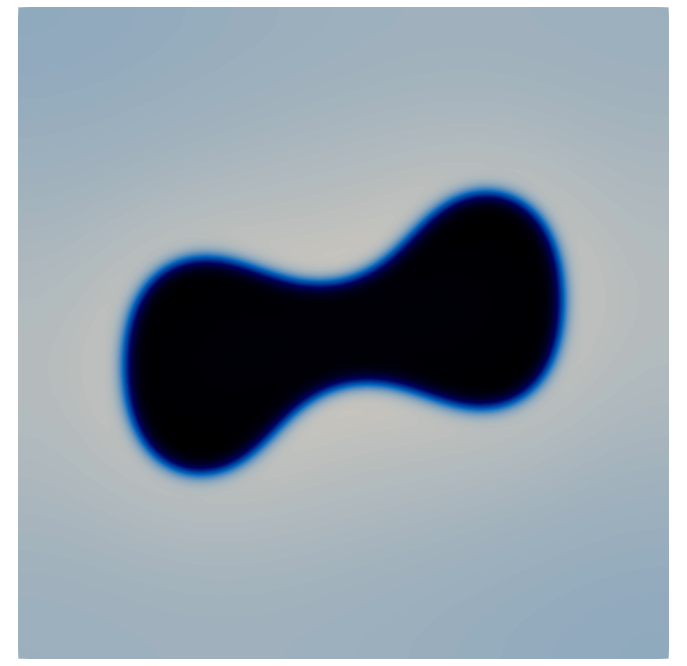}
\includegraphics[angle=-0,width=0.15\textwidth]{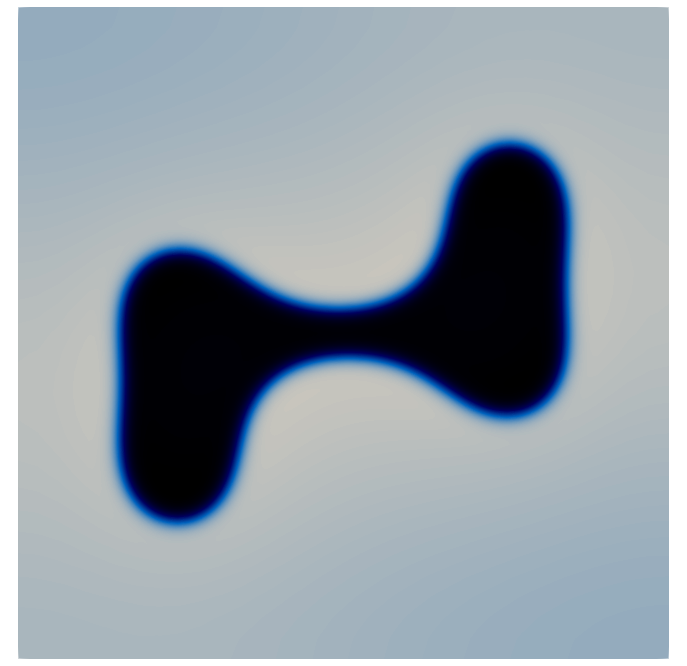}
\includegraphics[angle=-0,width=0.15\textwidth]{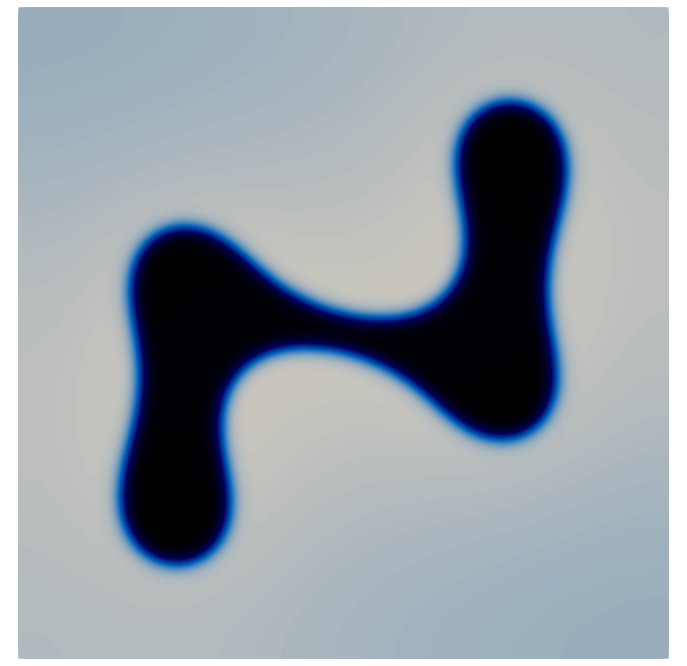}
\includegraphics[angle=-0,width=0.15\textwidth]{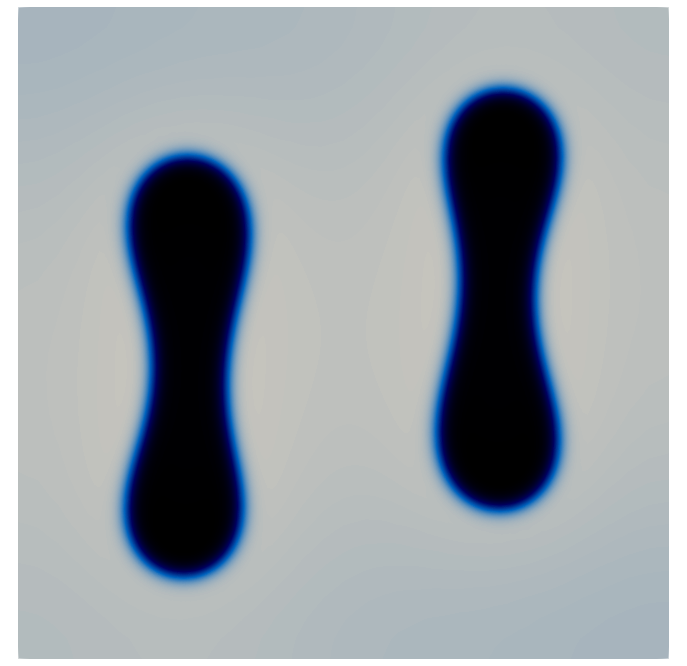}
}
\caption{($\eps=\frac1{16\pi}$, $\Omega=(0,2)^2$) 
Evolution for $\beta=0.02$, $S_-=1$, $S_+ = -4$. 
We show the solution at times $t=0,2,4,6,8,10$. 
%Below a plot of the discrete energy.
}
\label{fig:square1_16pi_r025_beta002}
% ~/hpc_cluster/data/alberta/ool/2d.aug16pi.square1.r025beta2e-2_1-4
\end{figure}%
%Interestingly, when we repeat the same simulation on the larger domain
%$\Omega = (0,4)^2$, we no longer observe pinch-off and instead see some
%nucleation at the boundary, see Figure~\ref{fig:square2_16pi_r025_beta002}.
%\begin{figure}
%% ../plotool; mv energy.png aug16pisquare2r025_002_1-4_e.png
%% paraview --data=uw_h..vtk
%%          choose white, blue, black colour palette, save animation
%% cp aug16pisquare2r025_002_1-4_e.png aug16pisquare2r025_002_1-4_0[0,1][0,2,4,6,8]0.png ~/tex/glns/ool/figures && scpp aug16pisquare2r025_002_1-4_e.png aug16pisquare2r025_002_1-4_0[0,1][0,2,4,6,8]0.png e23:tex/glns/ool/figures
%\center
%\mbox{
%\includegraphics[angle=-0,width=0.15\textwidth]{figures/aug16pisquare2r025_002_1-4_0000}
%\includegraphics[angle=-0,width=0.15\textwidth]{figures/aug16pisquare2r025_002_1-4_0020}
%\includegraphics[angle=-0,width=0.15\textwidth]{figures/aug16pisquare2r025_002_1-4_0040}
%\includegraphics[angle=-0,width=0.15\textwidth]{figures/aug16pisquare2r025_002_1-4_0060}
%\includegraphics[angle=-0,width=0.15\textwidth]{figures/aug16pisquare2r025_002_1-4_0080}
%\includegraphics[angle=-0,width=0.15\textwidth]{figures/aug16pisquare2r025_002_1-4_0100}
%}
%\caption{($\eps=\frac1{16\pi}$, $\Omega=(0,4)^2$) 
%Evolution for $\beta=0.02$, $S_-=1$, $S_+ = -4$. 
%We show the solution at times $t=0,2,4,6,8,10$. 
%%Below a plot of the discrete energy.
%}
%\label{fig:square2_16pi_r025_beta002}
%% ~/hpc_cluster/data/alberta/ool/2d.aug16pi.square2.r025beta2e-2_1-4
%\end{figure}%
Running the simulation on the %even 
larger domain $\Omega = (0,8)^2$
once again shows very delicate fingering patterns appear, but no pinch-off
occurs. See Figure~\ref{fig:square4_16pi_r025_beta002}.
\begin{figure}
% ../plotool; mv energy.png aug16pisquare4r025_002_1-4_e.png
% paraview --data=uw_h..vtk
%          choose white, blue, black colour palette, save animation
% cp aug16pisquare4r025_002_1-4_e.png aug16pisquare4r025_002_1-4_0[0,1][0,2,4,6,8]0.png ~/tex/glns/ool/figures && scpp aug16pisquare4r025_002_1-4_e.png aug16pisquare4r025_002_1-4_0[0,1][0,2,4,6,8]0.png e23:tex/glns/ool/figures
\center
\mbox{
\includegraphics[angle=-0,width=0.15\textwidth]{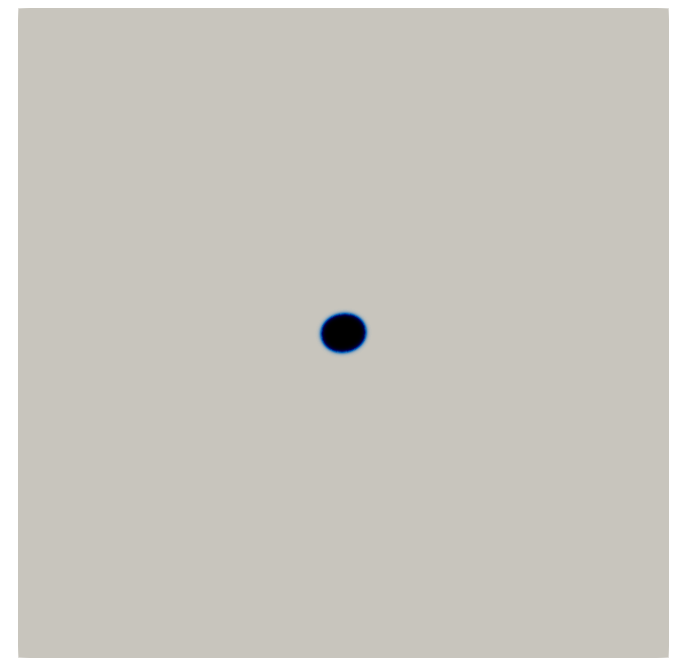}
\includegraphics[angle=-0,width=0.15\textwidth]{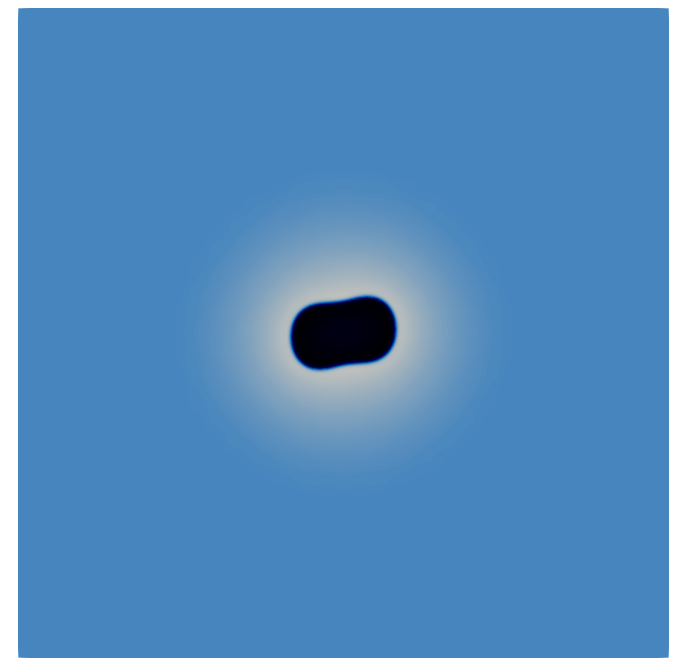}
\includegraphics[angle=-0,width=0.15\textwidth]{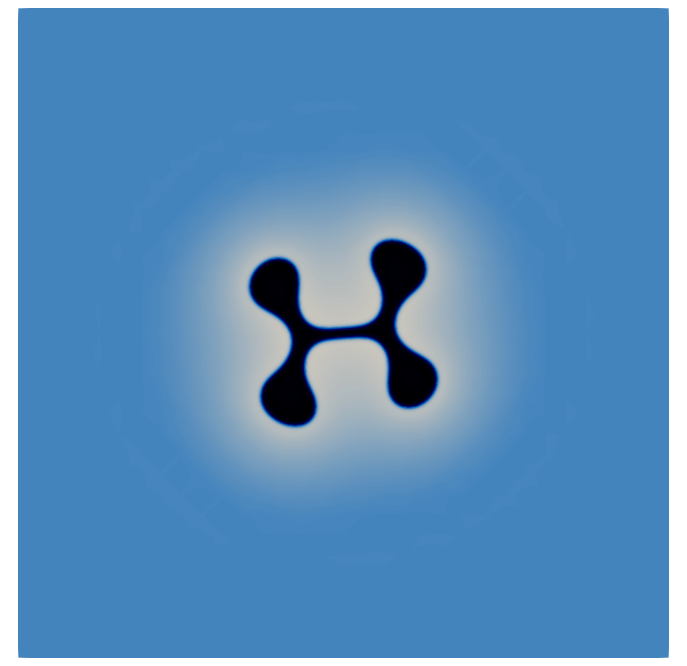}
\includegraphics[angle=-0,width=0.15\textwidth]{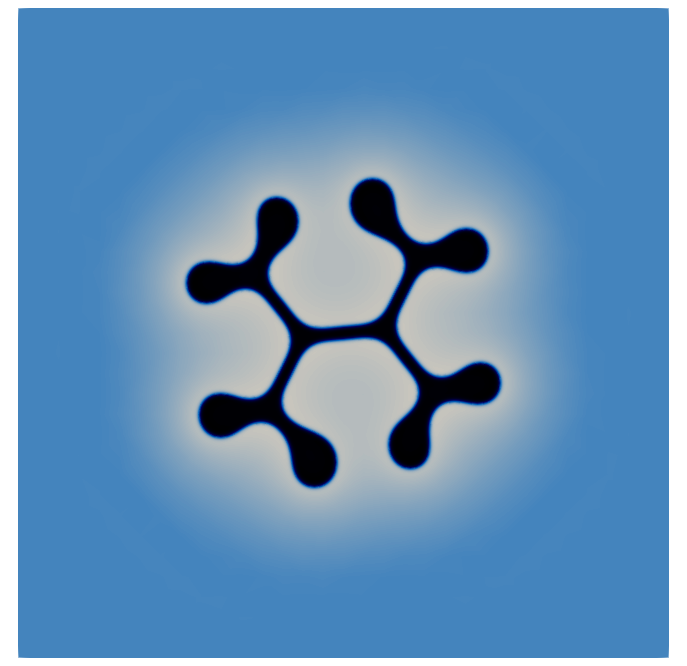}
\includegraphics[angle=-0,width=0.15\textwidth]{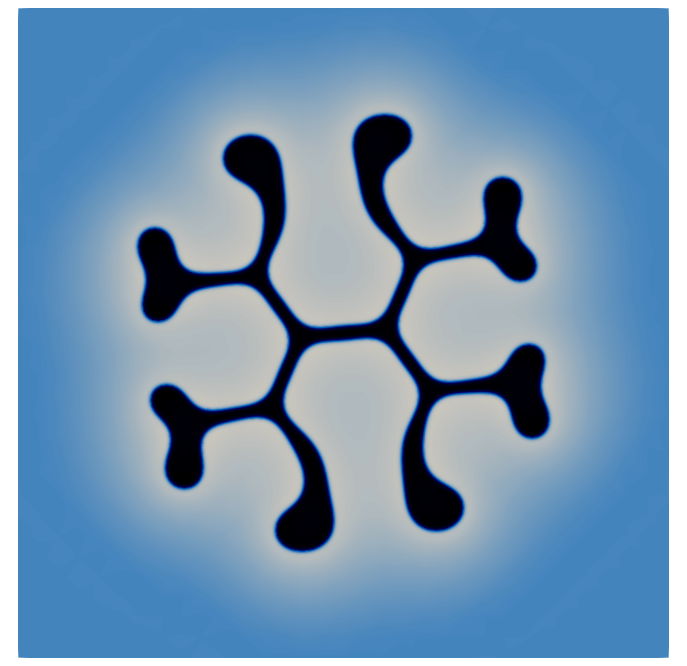}
\includegraphics[angle=-0,width=0.15\textwidth]{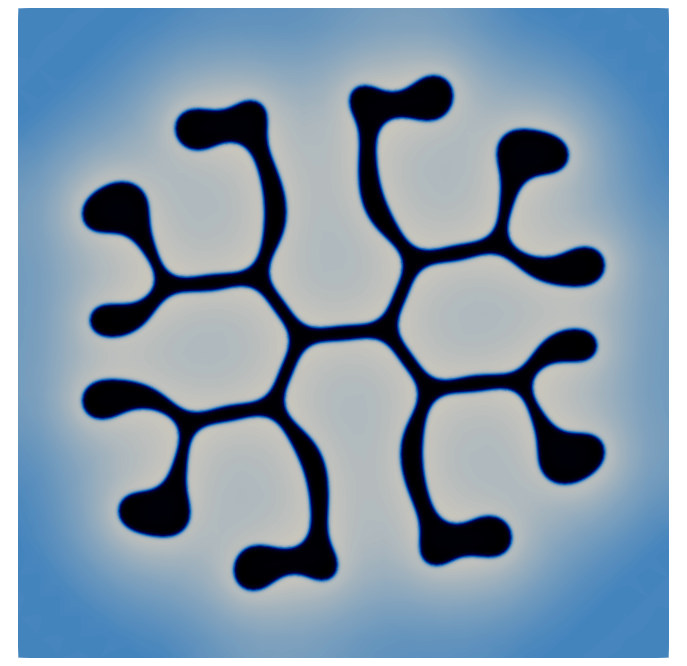}
}
\caption{($\eps=\frac1{16\pi}$, $\Omega=(0,8)^2$) 
Evolution for $\beta=0.02$, $S_-=1$, $S_+ = -4$. 
We show the solution at times $t=0,2,4,6,8,10$. 
%Below a plot of the discrete energy.
}
\label{fig:square4_16pi_r025_beta002}
% ~/hpc_cluster/data/alberta/ool/2d.aug16pi.square4.r025beta2e-2_1-4
\end{figure}%

\last{\subsection{Numerical computations: Active droplets in 3d}
In our first simulations for active droplets in 3d, we attempt
to create analogous shell structures as in 
Figure~\ref{fig:square1_32piR2_beta0002}.
To this end, we let $\Omega=(0,2)^3$ and choose the same physical parameters
$\beta=0.002$, $S_-=0.25$, and $S_+ = -4$. The initial droplet is a ball 
of radius $r_0=0.4$. For the phase field parameter we choose 
$\eps=\frac1{16\pi}$.
See Figure~\ref{fig:cube2_16pi_beta0002_r04}, where we observe
the creation of a shell, which eventually changes into a single ball again.
%\begin{figure}
%% paraview --data=uw_h..vtk
%%          choose white, blue, black colour palette, save animation
%% slice with 0.5 opacity, clip  with 1.0 opacity
%% cp pf16picube2b0002_00[0125][03].png ~/tex/glns/ool/figures && scpp pf16picube2b0002_00[0125][03].png e23:tex/glns/ool/figures
%\center
%\mbox{
%\includegraphics[angle=-0,width=0.2\textwidth]{figures/pf16picube2b0002_0000}
%\includegraphics[angle=-0,width=0.2\textwidth]{figures/pf16picube2b0002_0003}
%\includegraphics[angle=-0,width=0.2\textwidth]{figures/pf16picube2b0002_0010}
%\includegraphics[angle=-0,width=0.2\textwidth]{figures/pf16picube2b0002_0020}
%\includegraphics[angle=-0,width=0.2\textwidth]{figures/pf16picube2b0002_0050}
%}
%\caption{($\eps=\frac1{16\pi}$, $\Omega=(0,2)^3$) 
%Evolution for $\beta=0.002$, $S_-=0.25$, $S_+ = -4$. 
%We show the solution at times $t=0,0.3,1,2,5$.
%}
%\label{fig:cube2_16pi_beta0002_r04}
%% ~/hpc_cluster/data/alberta/ool/3d.16pi.square2.beta2e-3_025-4 
%\end{figure}%
\begin{figure}
% paraview --data=uw_h..vtk
%          choose white, blue, black colour palette, save animation
% slice with 0.5 opacity, clip  with 1.0 opacity
% cp pf16picube2b0002a_00[0125][03].png ~/tex/glns/ool/figures && scpp pf16picube2b0002a_00[0125][03].png e23:tex/glns/ool/figures
\center
\mbox{
\includegraphics[angle=-0,width=0.2\textwidth]{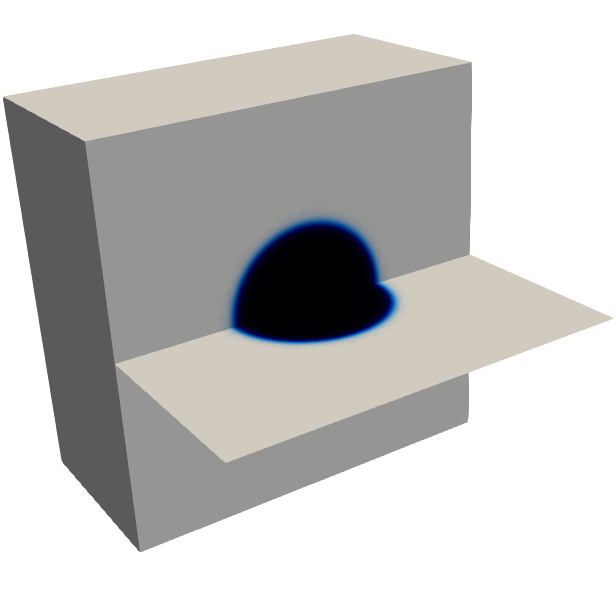}
\includegraphics[angle=-0,width=0.2\textwidth]{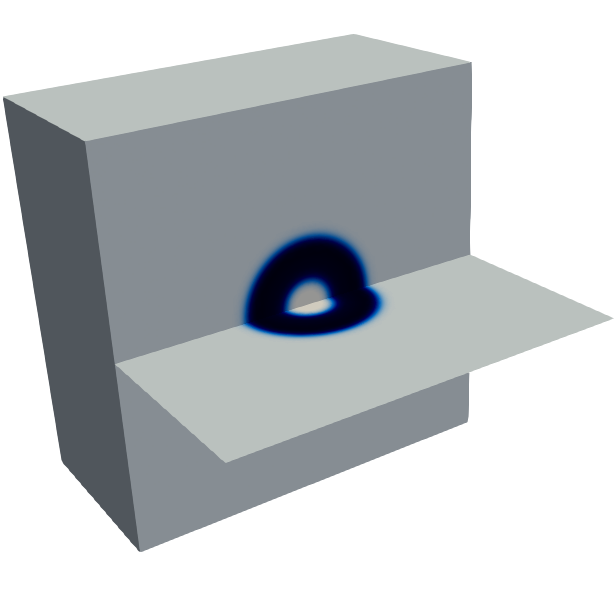}
\includegraphics[angle=-0,width=0.2\textwidth]{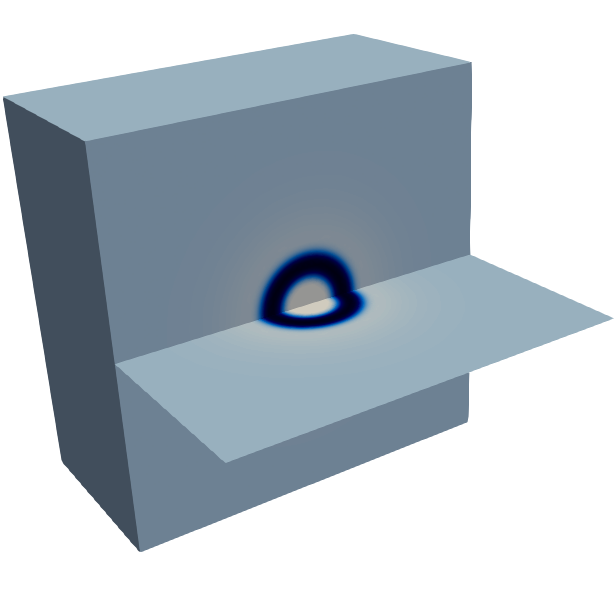}
\includegraphics[angle=-0,width=0.2\textwidth]{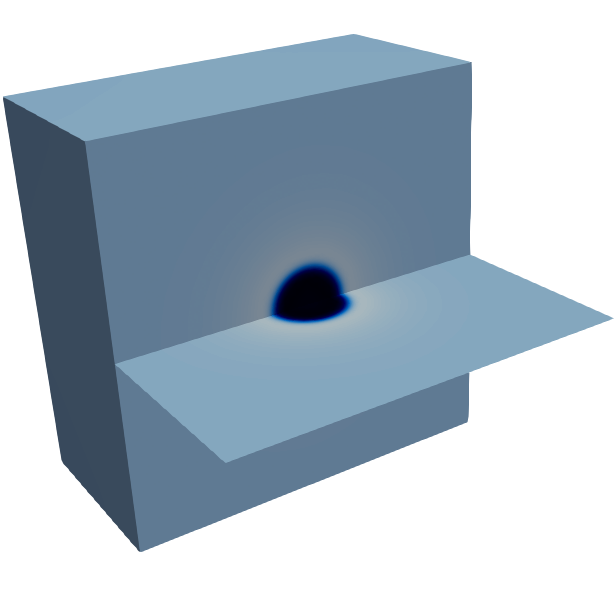}
\includegraphics[angle=-0,width=0.2\textwidth]{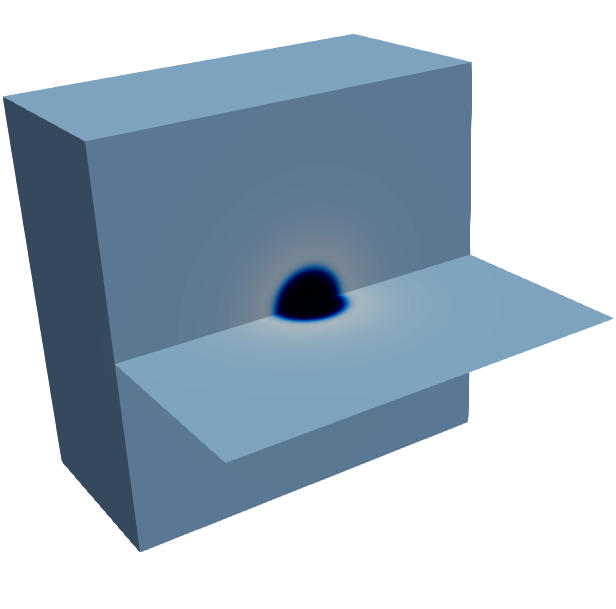}
}
\caption{($\eps=\frac1{16\pi}$, $\Omega=(0,2)^3$) 
Evolution for $\beta=0.002$, $S_-=0.25$, $S_+ = -4$. 
We show the solution at times $t=0,0.3,1,2,5$.
}
\label{fig:cube2_16pi_beta0002_r04}
% ~/hpc_cluster/data/alberta/ool/3d.16pi.square2.beta2e-3_025-4 
\end{figure}%
When we increase the radius of the initial ball to $r_0=0.6$, then the ensuing
evolution is more intricate, see Figure~\ref{fig:cube2_16pi_beta0002_r06}. 
At first two concentric shells appear, with the
inner shell merging into a ball after some time. Then the inner ball
disappears, leaving just a thin outer shell, which starts to become thinner 
and thinner, and which eventually
fragments into several much smaller blobs. These blobs become spherical and
then continue to move away from each other, slowly increasing in size.
\begin{figure}
% paraview --data=uw_h..vtk
%          choose white, blue, black colour palette, save animation
% choose contour with 0.5 opacity, clip and slice with 0.9 opacity
% cp pf16picube2b0002r06_00[0125][0356].png ~/tex/glns/ool/figures && scpp pf16picube2b0002r06_00[0125][0356].png e23:tex/glns/ool/figures
\center
%\includegraphics[angle=-0,width=0.24\textwidth]{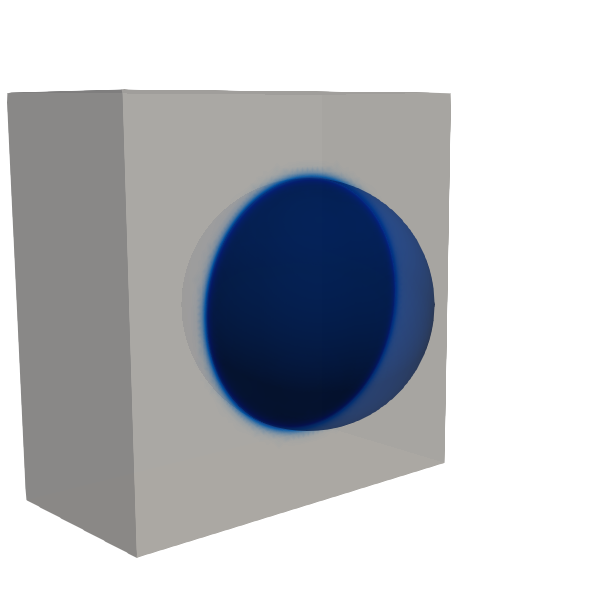}
%\includegraphics[angle=-0,width=0.24\textwidth]{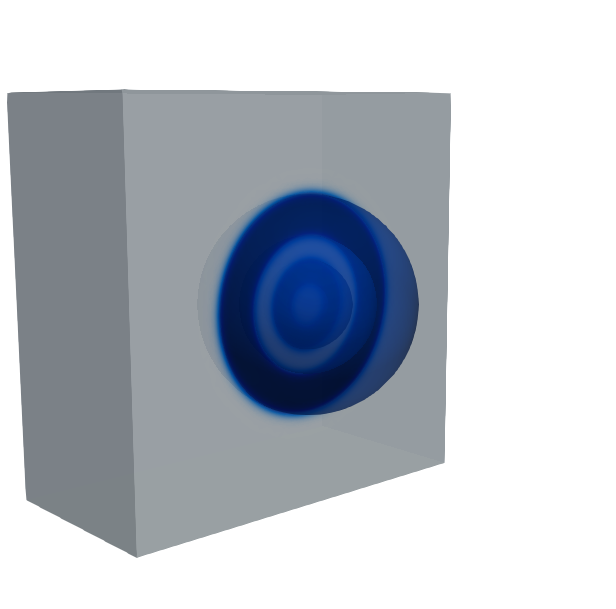}
%\includegraphics[angle=-0,width=0.24\textwidth]{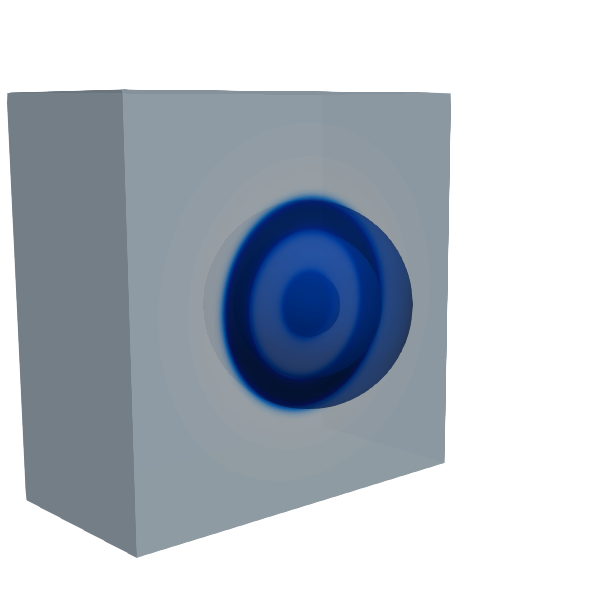}
%\includegraphics[angle=-0,width=0.24\textwidth]{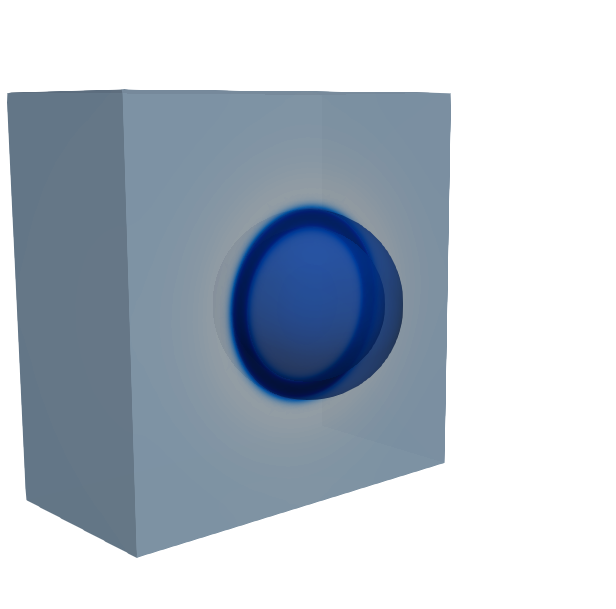}
%\caption{($\eps=\frac1{16\pi}$, $\Omega=(0,2)^3$) 
%Evolution for $\beta=0.002$, $S_-=0.25$, $S_+ = -4$. 
%We show the solution at times $t=0,0.3,0.5,1,1.5,1.6,2,5$.
%}
\label{fig:cube2_16pi_beta0002_r06}
% ~/hpc_cluster/data/alberta/ool/3d.16pi.square2r06.beta2e-3_025-4 splits into 4
%\end{figure}%
%\begin{figure}
% paraview --data=uw_h..vtk
%          choose white, blue, black colour palette, save animation
% choose contour with 0.5 opacity, clip and slice with 0.9 opacity
% cp pf16picube2b0002r06a_00[0125][0356].png ~/tex/glns/ool/figures && scpp pf16picube2b0002r06a_00[0125][0356].png e23:tex/glns/ool/figures
%\center
\includegraphics[angle=-0,width=0.24\textwidth]{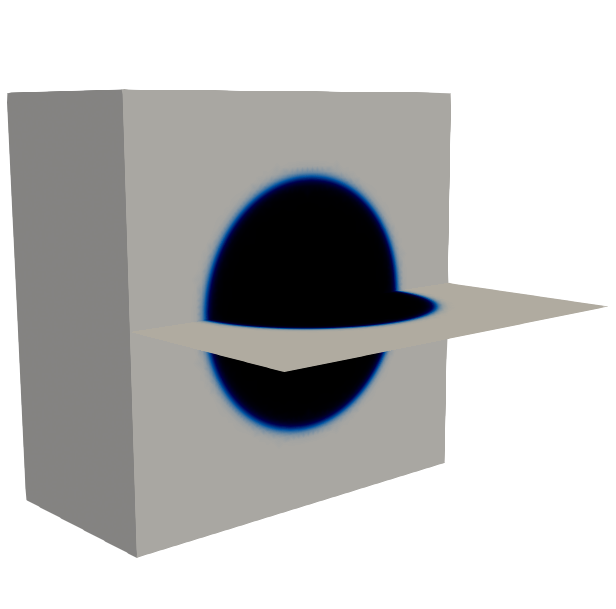}
\includegraphics[angle=-0,width=0.24\textwidth]{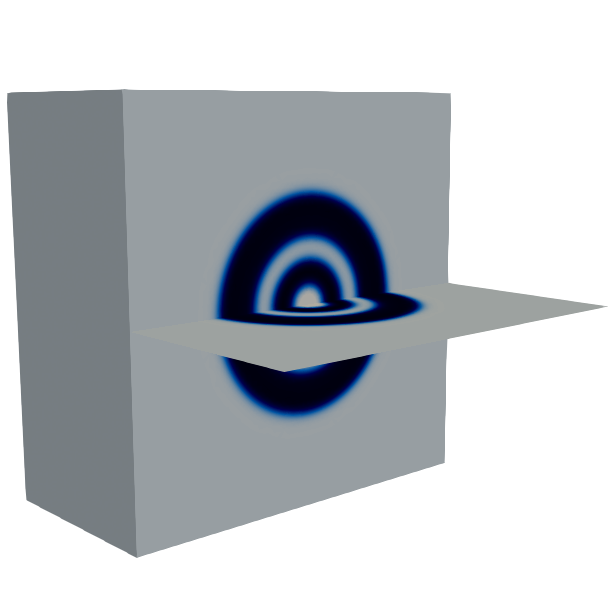}
\includegraphics[angle=-0,width=0.24\textwidth]{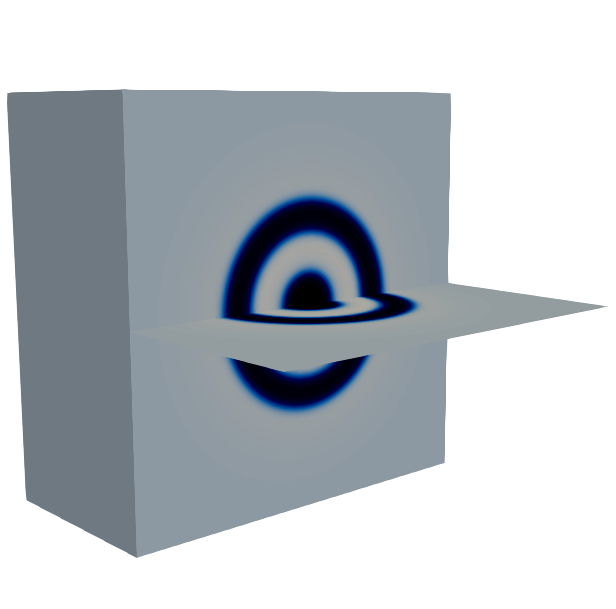}
\includegraphics[angle=-0,width=0.24\textwidth]{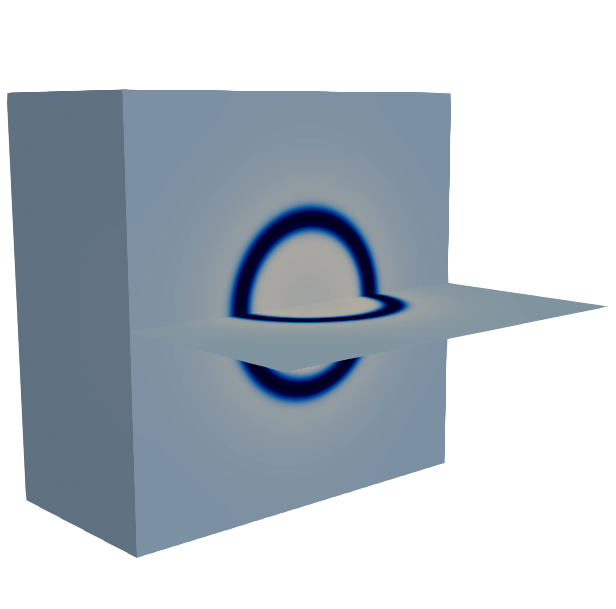}
\includegraphics[angle=-0,width=0.24\textwidth]{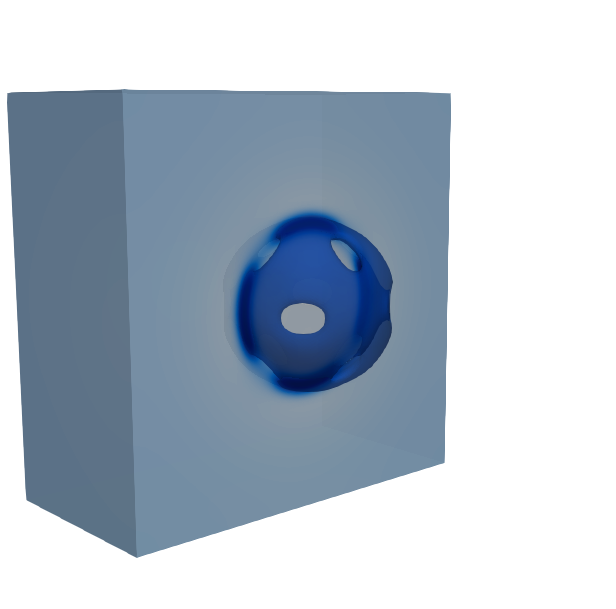}
\includegraphics[angle=-0,width=0.24\textwidth]{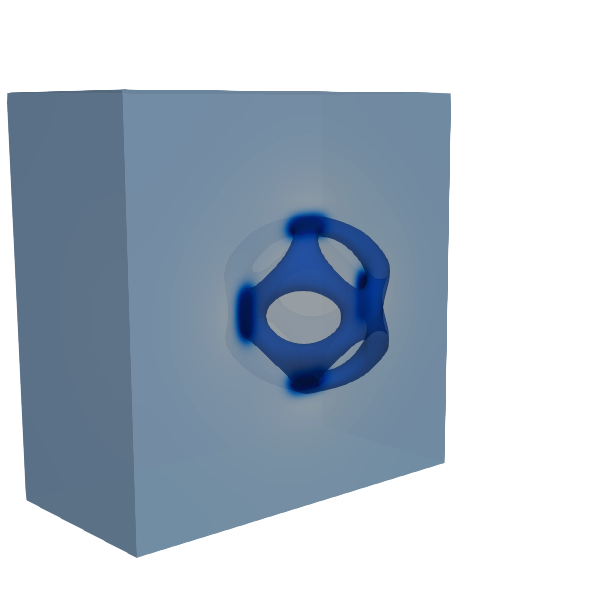}
\includegraphics[angle=-0,width=0.24\textwidth]{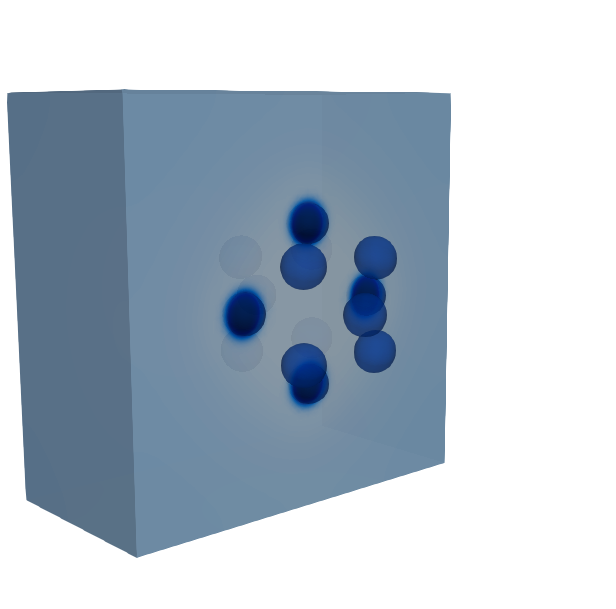}
\includegraphics[angle=-0,width=0.24\textwidth]{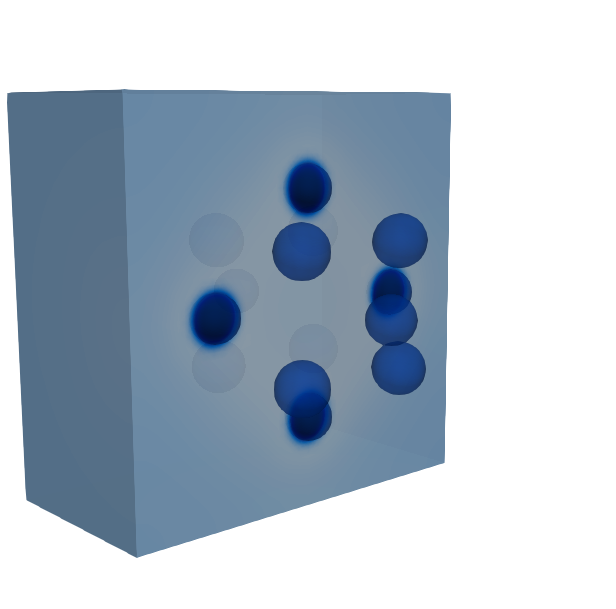}
\caption{($\eps=\frac1{16\pi}$, $\Omega=(0,2)^3$) 
Evolution for $\beta=0.002$, $S_-=0.25$, $S_+ = -4$. 
%We show the solution at times $t=0,0.3,0.5,1,1.5,1.6,2,5$.
\last{We show
the solution at times t = 0, 0.3, 0.5, 1, 1.5, 1.6, 2, 5
where the visualizations in the two rows differ.}
}
\label{fig:cube2_16pi_beta0002_r06}
% ~/hpc_cluster/data/alberta/ool/3d.16pi.square2r06.beta2e-3_025-4 splits into 4
\end{figure}%
}

\last{
For the remaining experiments in this subsection we use $r_c = 0.5$, recall}
\eqref{def:source1}, and consider the domain $\Omega = (0,8)^3$.
%\comm{[This is perhaps more of a comment for us. We derived $S_I$ only in the case $r_c = 1$, whereas here $r_c = 0.5$ is used. On the other hand, using the PF model, $S_I$ is not observed...]} 
In our first experiment, we choose for the initial droplet we choose a rounded
cylinder of total dimension $0.8\times0.4\times0.4$. In particular, it is made
up of two half-balls of radius $0.2$, which are connected with a cylinder
of radius $0.2$ and height $0.4$. For the physical parameters we choose
$\beta=0.1$, $S_-=0.8$, $S_+ = -10$, $m_-=0.2$, $m_+=0.5$,
$\rho_-=0.2$, $\rho_+=0.1$, $L=-1$, and we let $\eps = \frac1{16\pi}$.
The results of our numerical simulation can be seen in
Figure~\ref{fig:c080404}, where we notice a primary pinch-off that then
leads to several secondary pinch-offs.
\begin{figure}
% cp pf16pi_*finalpush9*_0[0-9][0-9]0.png ~/tex/glns/ool/figures && scpp pf16pi_*finalpush9*_0[0-9][0-9]0.png e23:tex/glns/ool/figures
\center
\includegraphics[angle=-0,width=0.3\textwidth]{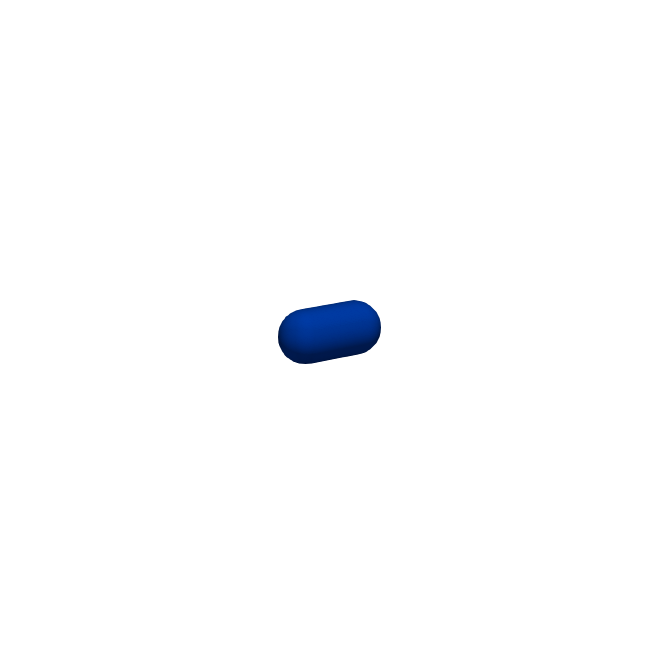}
\includegraphics[angle=-0,width=0.3\textwidth]{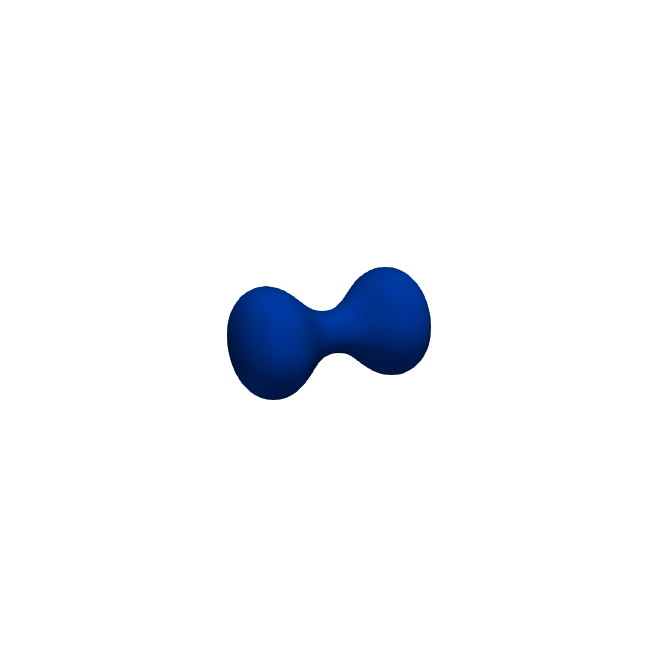}
\includegraphics[angle=-0,width=0.3\textwidth]{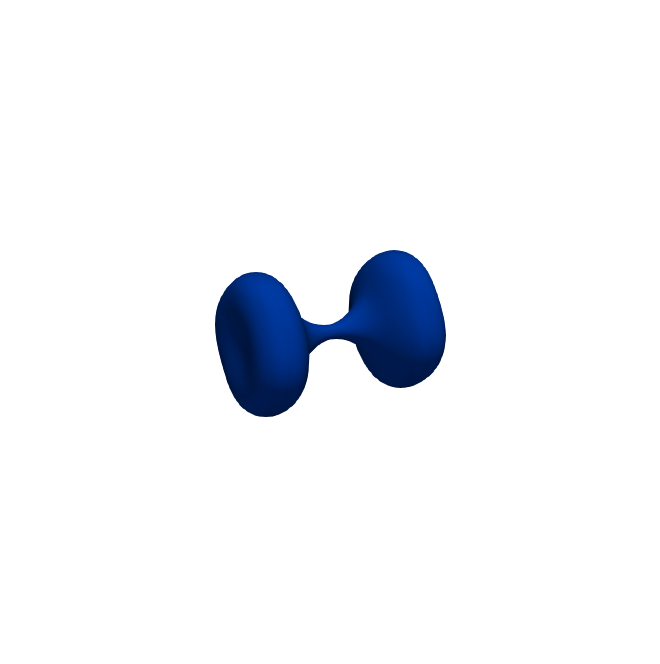}
\includegraphics[angle=-0,width=0.3\textwidth]{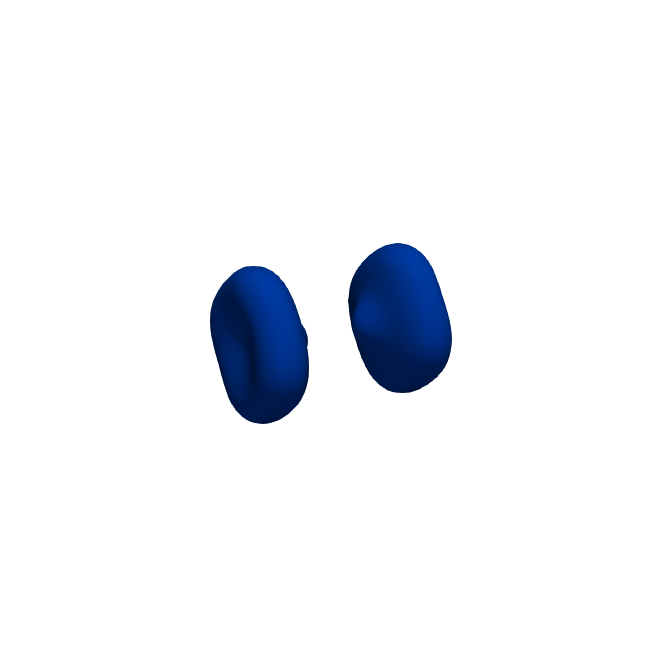}
\includegraphics[angle=-0,width=0.3\textwidth]{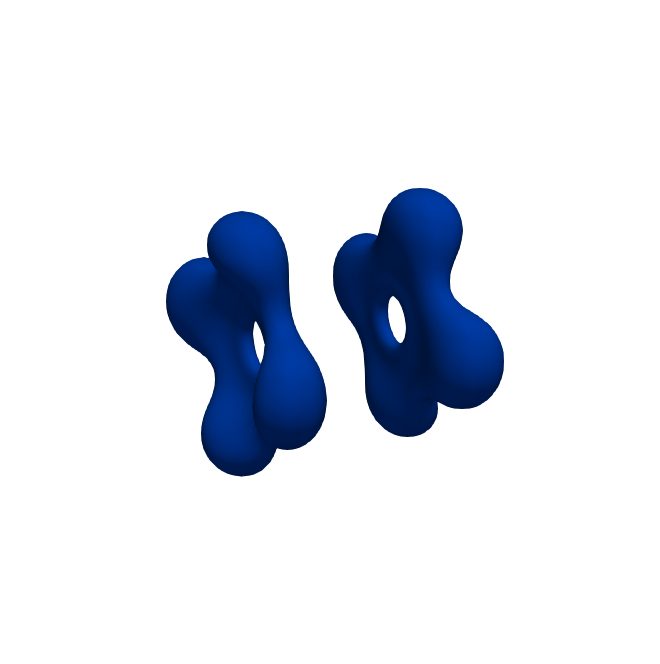}
\includegraphics[angle=-0,width=0.3\textwidth]{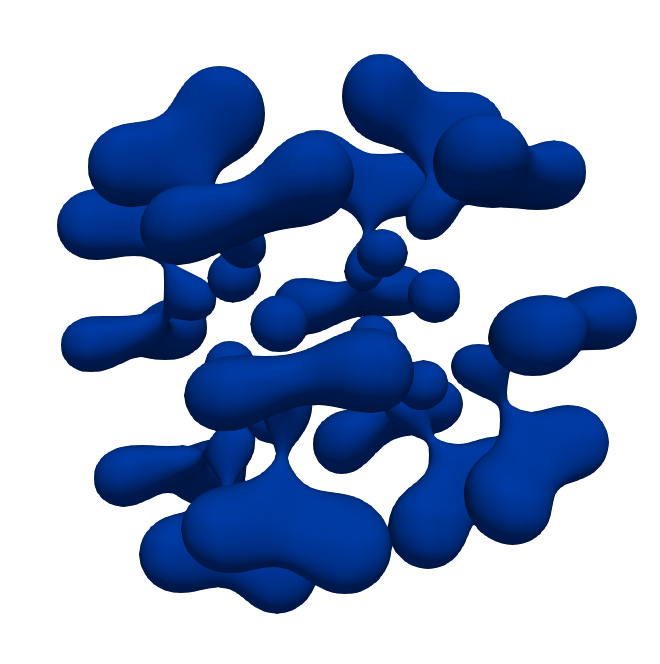}
\caption{($\eps=\frac1{16\pi}$, $\Omega = (0,8)^3$) 
Evolution for $\beta=0.1$, $S_-=0.8$, $S_+ = -10$, $m_-=0.2$, $m_+=0.5$,
$\rho_-=0.2$, $\rho_+=0.1$, $L=-1$.
We show the solution at times $t=0,1,1.3,1.4,2,3.5$. 
}
\label{fig:c080404}
% ~/hpc_cluster/data/alberta/ool/3d.nov16pi.rc05.squared4.r028noise_beta01_08-10_m0205_0201_L-1_cigar080404
\end{figure}%

For the next experiment we only change the aspect ratio
of the initial droplet. In particular, the initial droplet now is a 
a rounded cylinder of total dimension $0.6\times0.4\times0.4$.
%, which is either
%placed at the centre of the domain, or at the point $(2,2,2)^T\in\mathbb R^3$.
The results of this numerical simulation can be seen in
Figure~\ref{fig:c060404}. In Figure~\ref{fig:c060404a} we show the interfaces
at time $t=4.4$ within $\Omega$ from three different points of view. \mod{
We observe that the evolution can be very complex with many splittings and other topological changes. 
Figure~\ref{fig:c060404a} shows that droplets have been formed in the center  and away from the center the evolution is still very complex. We expect that eventually more and more round droplets will form.}
\begin{figure}
% rsync -avz --include="*.png"  --exclude="*" e23:hpc_local/data/alberta/ool/3d.nov16pi.rc05.squared4.r028noise_beta01_08-10_m0205_0201_L-1_cigar060404/ .
% cp pf16pi_c060404*_0[0-9][0-9]0.png ~/tex/glns/ool/figures && scpp pf16pi_c060404*_0[0-9][0-9]0.png e23:tex/glns/ool/figures
\center
\includegraphics[angle=-0,width=0.3\textwidth]{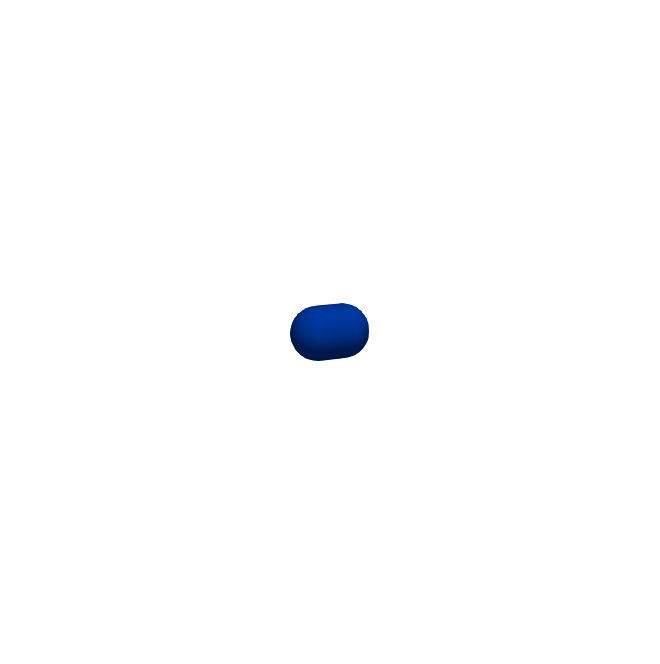}
\includegraphics[angle=-0,width=0.3\textwidth]{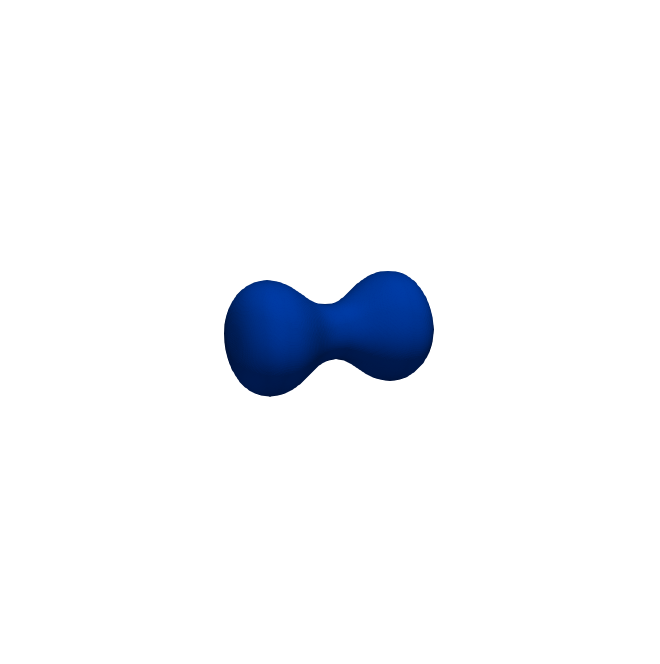}
\includegraphics[angle=-0,width=0.3\textwidth]{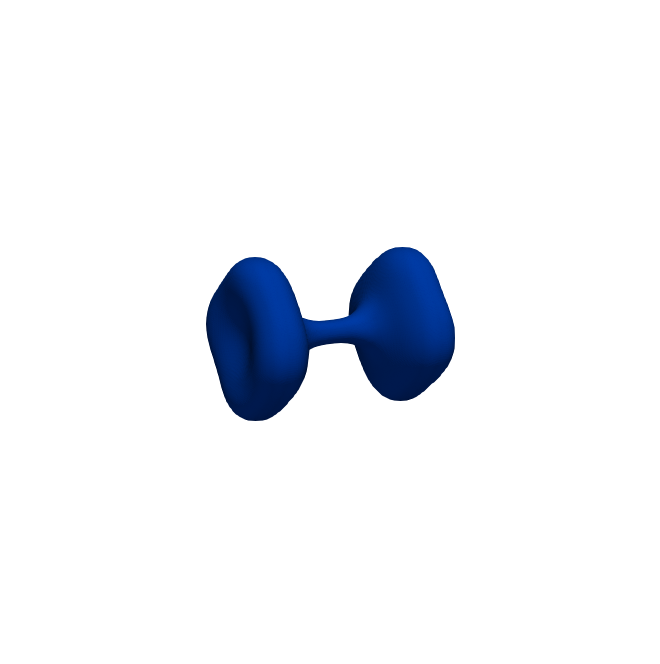}
\includegraphics[angle=-0,width=0.3\textwidth]{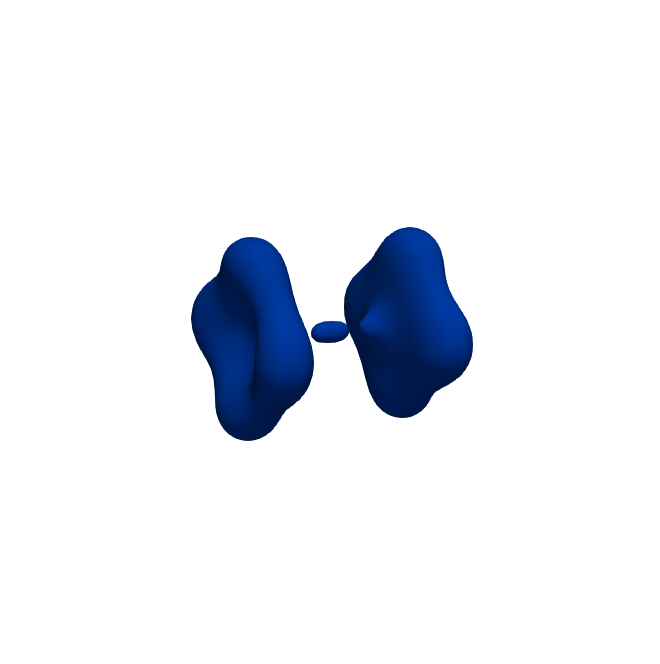}
\includegraphics[angle=-0,width=0.3\textwidth]{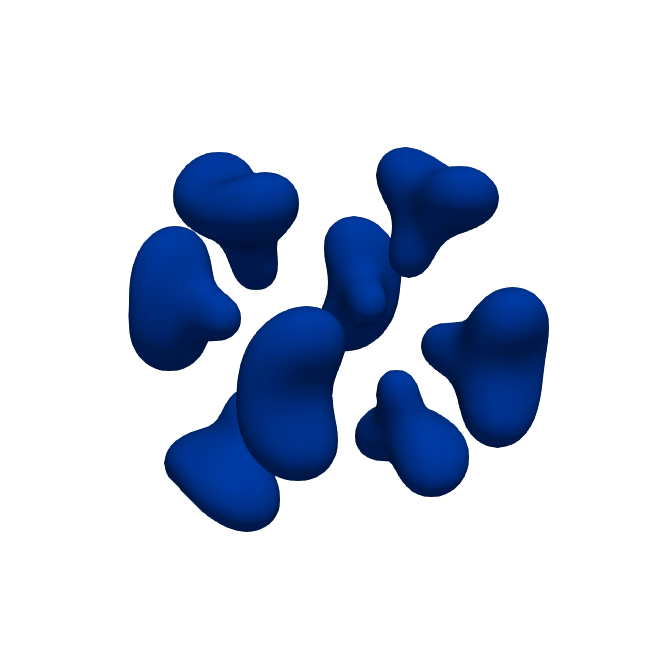}
\includegraphics[angle=-0,width=0.3\textwidth]{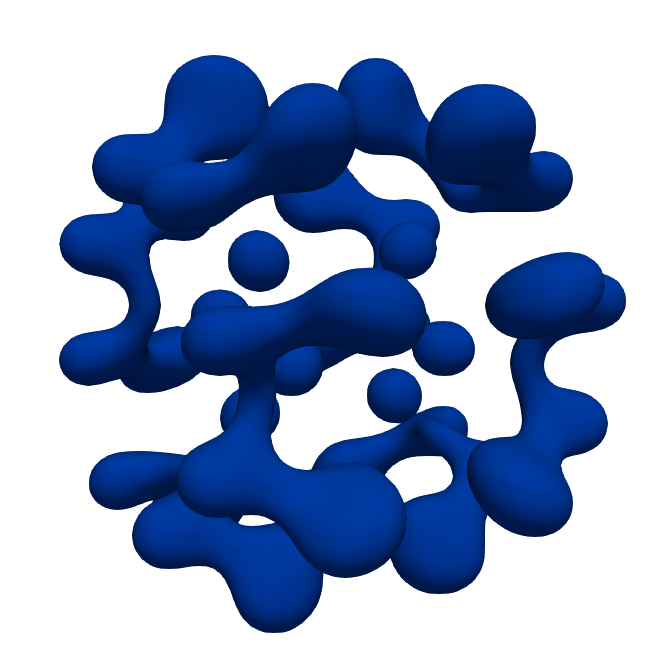}
\caption{($\eps=\frac1{16\pi}$, $\Omega = (0,8)^3$) 
Evolution for $\beta=0.1$, $S_-=0.8$, $S_+ = -10$, $m_-=0.2$, $m_+=0.5$,
$\rho_-=0.2$, $\rho_+=0.1$, $L=-1$.
We show the solution at times $t=0,1.4,1.8,2,3,4$. 
}
\label{fig:c060404}
% ~/hpc_cluster/data/alberta/ool/3d.nov16pi.rc05.squared4.r028noise_beta01_08-10_m0205_0201_L-1_cigar060404
\end{figure}%
\begin{figure}
% cp pf16pi_c060404*_*[xyz].png ~/tex/glns/ool/figures && scpp pf16pi_c060404*_*[xyz].png e23:tex/glns/ool/figures
\center
\includegraphics[angle=-0,width=0.3\textwidth]{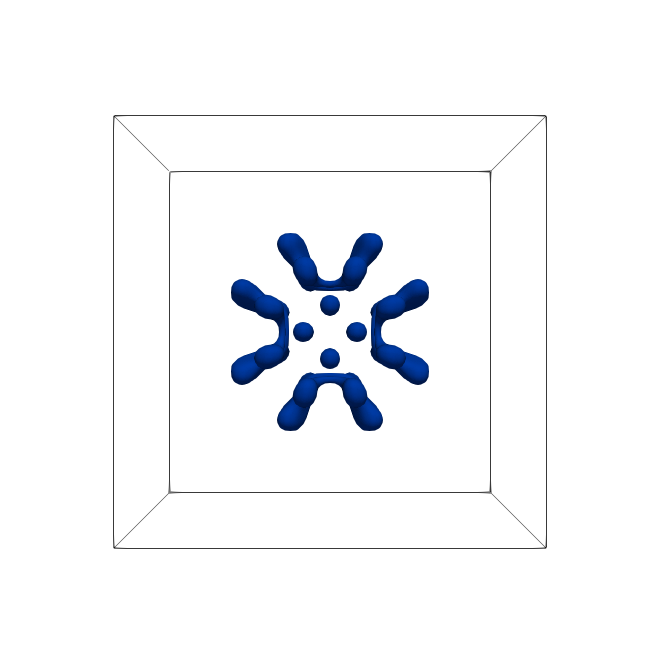}
\includegraphics[angle=-0,width=0.3\textwidth]{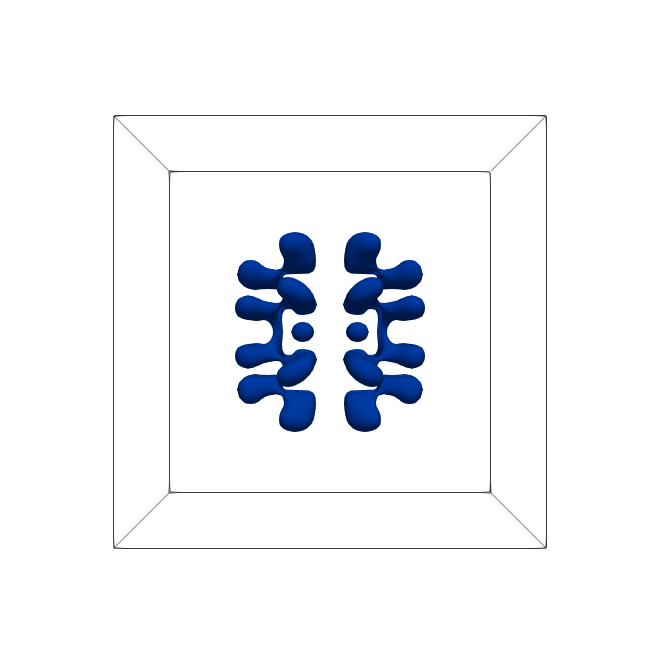}
\includegraphics[angle=-0,width=0.3\textwidth]{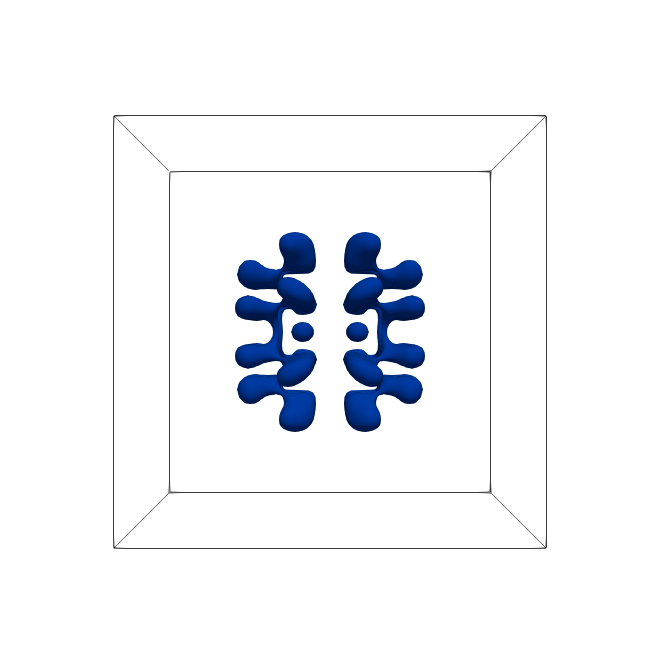}
\caption{The evolution from Figure~\ref{fig:c060404} at time $t=4.4$ viewed
from the front, from the side and from above.
}
\label{fig:c060404a}
% ~/hpc_cluster/data/alberta/ool/3d.nov16pi.rc05.squared4.r028noise_beta01_08-10_m0205_0201_L-1_cigar060404
\end{figure}%
\begin{comment}
\begin{figure}
% cp pf16pi_*finalpush9*_0[0-9][0-9]0.png ~/tex/glns/ool/figures && scpp pf16pi_*finalpush9*_0[0-9][0-9]0.png e23:tex/glns/ool/figures
\center
\includegraphics[angle=-0,width=0.3\textwidth]{figures/pf16pi_finalpush9off_c060404_0000}
\includegraphics[angle=-0,width=0.3\textwidth]{figures/pf16pi_finalpush9off_c060404_0140}
\includegraphics[angle=-0,width=0.3\textwidth]{figures/pf16pi_finalpush9off_c060404_0180}
\includegraphics[angle=-0,width=0.3\textwidth]{figures/pf16pi_finalpush9off_c060404_0200}
\includegraphics[angle=-0,width=0.3\textwidth]{figures/pf16pi_finalpush9off_c060404_0300}
\includegraphics[angle=-0,width=0.3\textwidth]{figures/pf16pi_finalpush9off_c060404_0340}
\caption{($\eps=\frac1{16\pi}$, $\Omega = (0,8)^3$) 
Same as Figure~\ref{fig:c060404}, but the initial blob is centred at 
$(2,2,2)^T$.
We show the solution at times $t=0,1.4,1.8,2,3,3.4$. 
}
\label{fig:c060404off}
% ~/hpc_cluster/data/alberta/ool/3d.nov16pi.rc05.squared4.beta01_08-10_m0205_0201_L-1_cigar060404_offset
\end{figure}%
\end{comment}
%\comm{[@Robert: Here, we should explain why we are unable to produce 3D shells...]}

\section*{\bf Acknowledgements}
\noindent
AS gratefully acknowledge some support
from the MIUR-PRIN Grant 2020F3NCPX ``Mathematics for industry 4.0 (Math4I4)'', from ``MUR GRANT Dipartimento di Eccellenza'' 2023-2027  and from the Alexander von Humboldt Foundation.
Additionally, AS appreciates affiliation with GNAMPA (Gruppo Nazionale per l'Analisi Matematica, la Probabilità e le loro Applicazioni) of INdAM (Istituto Nazionale di Alta Matematica).

%%%%%%%%%%%%%%%%%%%%%%%%%%%%%%%%%%%%%%%%%%%%%%%%%%%%%%%%%%%%%%%%%%%%%%%%
\footnotesize
%\bibliographystyle{abbrv} % We choose the "plain" reference style
%\bibliography{refs} % Entries are in the refs.bib file

\begin{thebibliography}{10}

\bibitem{AbelsGG12}
H.~Abels, H.~Garcke, and G.~Gr\"un.
\newblock Thermodynamically consistent, frame indifferent diffuse interface
  models for incompressible two-phase flows with different densities.
\newblock {\em Math. Models Methods Appl. Sci.}, 22(3):1150013, 2012.

\bibitem{AbelsW07}
H.~Abels and M.~Wilke.
\newblock Convergence to equilibrium for the {C}ahn--{H}illiard equation with a
  logarithmic free energy.
\newblock {\em Nonlinear Anal.}, 67(11):3176--3193, 2007.

\bibitem{AlikakosBC94}
N.~D. Alikakos, P.~W. Bates, and X.~Chen.
\newblock Convergence of the {C}ahn--{H}illiard equation to the {H}ele--{S}haw
  model.
\newblock {\em Arch. Rational Mech. Anal.}, 128(2):165--205, 1994.

\bibitem{BDGP}
E.~B\"ansch, K.~Deckelnick, H.~Garcke, and P.~Pozzi.
\newblock {\em Interfaces: modeling, analysis, numerics}, volume~51 of {\em
  Oberwolfach Seminars}.
\newblock Birkh\"auser/Springer, Cham, [2023] \copyright2023.

\bibitem{voids}
J.~W. Barrett, R.~N\"urnberg, and V.~Styles.
\newblock Finite element approximation of a phase field model for void
  electromigration.
\newblock {\em SIAM J. Numer. Anal.}, 42(2):738--772, 2004.

\bibitem{bauermann2023formation}
J.~Bauermann, G.~Bartolucci, J.~Boekhoven, C.~A. Weber, and F.~J{\"u}licher.
\newblock Formation of liquid shells in active droplet systems.
\newblock {\em Phys. Rev. Res.}, 5(4):043246, 2023.

\bibitem{voids3d}
L.~Ba\v{n}as and R.~N\"urnberg.
\newblock Finite element approximation of a three dimensional phase field model
  for void electromigration.
\newblock {\em J. Sci. Comp.}, 37(2):202--232, 2008.

\bibitem{BergmannBBetal23}
A.~Bergmann, J.~Bauermann, G.~Bartolucci, C.~Donau, M.~Stasi, A.-L.
  Holtmannsp\"otter, F.~J\"ulicher, C.~Weber, and J.~Boekhoven.
\newblock Liquid spherical shells are a non-equilibrium steady state of active
  droplets.
\newblock {\em Nat. Commun.}, 14:6552, 2023.

\bibitem{BertozziEG07}
A.~L. Bertozzi, S.~Esedo\=glu, and A.~Gillette.
\newblock Inpainting of binary images using the {C}ahn--{H}illiard equation.
\newblock {\em IEEE Trans. Image Process.}, 16(1):285--291, 2007.

\bibitem{BurgerHS09}
M.~Burger, L.~He, and C.-B. Sch\"onlieb.
\newblock Cahn--{H}illiard inpainting and a generalization for grayvalue
  images.
\newblock {\em SIAM J. Imaging Sci.}, 2(4):1129--1167, 2009.

\bibitem{Cahn61}
J.~W. Cahn.
\newblock On spinodal decomposition.
\newblock {\em Acta Metall.}, 9(9):795--801, 1961.

\bibitem{CahnH58}
J.~W. Cahn and J.~E. Hilliard.
\newblock Free energy of a non-uniform system. {I}. {I}nterfacial free energy.
\newblock {\em J. Chem. Phys.}, 28(2):258--267, 1958.

\bibitem{Ciarlet78}
P.~G. Ciarlet.
\newblock {\em The Finite Element Method for Elliptic Problems}.
\newblock North-Holland Publishing Co., Amsterdam, 1978.
\newblock Studies in Mathematics and its Applications, Vol. 4.

\bibitem{CristiniLLW09}
V.~Cristini, X.~Li, J.~S. Lowengrub, and S.~M. Wise.
\newblock Nonlinear simulations of solid tumor growth using a mixture model:
  invasion and branching.
\newblock {\em J. Math. Biol.}, 58(4-5):723--763, 2009.

\bibitem{Davis04}
T.~A. Davis.
\newblock Algorithm 832: {UMFPACK} {V}4.3---an unsymmetric-pattern multifrontal
  method.
\newblock {\em ACM Trans. Math. Software}, 30(2):196--199, 2004.

\bibitem{DonauSSetal20}
C.~Donau, F.~Sp\"ath, M.~Sosson, B.~A. Kriebisch, F.~Schnitter, M.~Tena-Solson,
  H.-S. Kang, E.~Salibi, M.~Sattler, H.~Mutschler, and J.~Boekhoven.
\newblock Active coacervate droplets as a model for membraneless organelles and
  protocells.
\newblock {\em Nat. Commun.}, 11:5167, 2020.

\bibitem{ElliottG96}
C.~M. Elliott and H.~Garcke.
\newblock On the {C}ahn--{H}illiard equation with degenerate mobility.
\newblock {\em SIAM J. Math. Anal.}, 27(2):404--423, 1996.

\bibitem{ElliottZ86}
C.~M. Elliott and S.~Zheng.
\newblock On the {C}ahn--{H}illiard equation.
\newblock {\em Arch. Rational Mech. Anal.}, 96(4):339--357, 1986.

\bibitem{Garcke13}
H.~Garcke.
\newblock Curvature driven interface evolution.
\newblock {\em Jahresber. Dtsch. Math.-Ver.}, 115(2):63--100, 2013.

\bibitem{GLSS}
H.~Garcke, K.~F. Lam, E.~Sitka, and V.~Styles.
\newblock A {C}ahn--{H}illiard--{D}arcy model for tumour growth with chemotaxis
  and active transport.
\newblock {\em Math. Models Methods Appl. Sci.}, 26(06):1095--1148, 2016.

\bibitem{GLS18}
H.~Garcke, K.~F. Lam, and V.~Styles.
\newblock Cahn--{H}illiard inpainting with the double obstacle potential.
\newblock {\em SIAM J. Imaging Sci.}, 11(3):2064--2089, 2018.

\bibitem{GarckeNRW03}
H.~Garcke, B.~Niethammer, M.~Rumpf, and U.~Weikard.
\newblock Transient coarsening behaviour in the {C}ahn--{H}illiard model.
\newblock {\em Acta Mater.}, 51(10):2823--2830, 2003.

\bibitem{GarckeS06}
H.~Garcke and B.~Stinner.
\newblock Second order phase field asymptotics for multi-component systems.
\newblock {\em Interfaces Free Bound.}, 8(2):131--157, 2006.

\bibitem{Hawkins-DaarudZO12}
A.~Hawkins-Daarud, K.~G. van~der Zee, and J.~T. Oden.
\newblock Numerical simulation of a thermodynamically consistent four-species
  tumor growth model.
\newblock {\em Int. J. Numer. Methods Biomed. Eng.}, 28(1):3--24, 2012.

\bibitem{KhainS08}
E.~Khain and L.~M. Sander.
\newblock Generalized {C}ahn--{H}illiard equation for biological applications.
\newblock {\em Phys. Rev. E}, 77:051129, May 2008.

\bibitem{Miranville19}
A.~Miranville.
\newblock {\em The {C}ahn--{H}illiard equation}, volume~95 of {\em CBMS-NSF
  Regional Conference Series in Applied Mathematics}.
\newblock Society for Industrial and Applied Mathematics (SIAM), Philadelphia,
  PA, 2019.
\newblock Recent advances and applications.

\bibitem{NovickCohen98}
A.~Novick-Cohen.
\newblock The {C}ahn--{H}illiard equation: mathematical and modelling
  perspectives.
\newblock {\em Adv. Math. Sci. Appl.}, 8:965--985, 1998.

\bibitem{Pego89}
R.~L. Pego.
\newblock Front migration in the nonlinear {C}ahn--{H}illiard equation.
\newblock {\em Proc. Roy. Soc. London Ser. A}, 422(1863):261--278, 1989.

\bibitem{Alberta}
A.~Schmidt and K.~G. Siebert.
\newblock {\em Design of Adaptive Finite Element Software: The Finite Element
  Toolbox {ALBERTA}}, volume~42 of {\em Lecture Notes in Computational Science
  and Engineering}.
\newblock Springer-Verlag, Berlin, 2005.

\bibitem{Wang19}
W.~Yang, Z.~Huang, and W.~Zhu.
\newblock Image {S}egmentation using the {C}ahn--{H}hilliard equation.
\newblock {\em J. Sci. Comput.}, 79(2):1057--1077, 2019.

\bibitem{Active_drops}
D.~Zwicker, R.~Seyboldt, C.~A. Weber, A.~A. Hyman, and F.~J{\"u}licher.
\newblock Growth and division of active droplets provides a model for
  protocells.
\newblock {\em Nat. Phys.}, doi:10.1038/nphys3984:1745{\textendash}2481, Dec
  2016.

\bibitem{zwickerostwald}
J.~F. Zwicker~D, Hyman~AA.
\newblock Suppression of {O}stwald ripening in active emulsions.
\newblock {\em Phys. Rev. E Stat. Nonlin. Soft Matter Phys.}, 92(1):012317,
  2015.

\end{thebibliography}

\end{document}